\newcommand{\oprocendsymbol}{\hbox{$\square$}}
\newcommand{\oprocend}{\relax\ifmmode\else\unskip\hfill\fi\oprocendsymbol}
\newtheorem{thm}{Theorem}[section]
\newtheorem{corollary}[thm]{Corollary}
\newtheorem{lemma}[thm]{Lemma}
\newtheorem{proposition}[thm]{Proposition}
\newtheorem{rem}[thm]{Remark}
\newtheorem{assumption}[thm]{Assumption}
\newtheorem*{probf}{Problem statement}
\crefname{assumption}{Assumption}{Assumptions}
\crefname{proposition}{Proposition}{Propositions}
\crefname{corollary}{Corollary}{Corollaries}
\crefname{thm}{Theorem}{Theorems}
\crefname{rem}{Remark}{Remarks}
\crefname{lemma}{Lemma}{Lemmas}
\crefname{figure}{Figure}{Figures}
\crefname{appendix}{Appendix}{}
\newcommand{\subscr}[2]{#1_{\textup{#2}}}
\newcommand{\supscr}[2]{#1^{\textup{#2}}}
\newcommand{\RgeO}{\ensuremath{\mathbb{R}_{ \geq 0}}}
\newcommand{\Rat}[1]{\ensuremath{\mathbb{R}^{#1}}}
\newcommand{\Cdf}{\operatorname{Cdf}}
\newcommand{\Env}{\operatorname{Env}}
\newcommand{\B}{\mathcal{B}}
\newcommand{\CD}{\mathcal{C}\mathcal{D}}
\newcommand{\F}{\mathcal{F}}
\renewcommand{\H}{\mathcal{H}}
\renewcommand{\P}{\mathcal{P}}
\newcommand{\bP}{\mathbb{P}}
\newcommand{\bN}{\mathbb{N}}
\begin{document}

\author[F. Boso]{Francesca Boso*}
\address{Department of Energy Resources Engineering, Stanford University}
\email{fboso@stanford.edu}

\author[D. Boskos]{Dimitris Boskos*}
\address{Department of Mechanical and Aerospace Engineering, UC San Diego}
\email{dboskos@ucsd.edu}

\author[J. Cort\'es]{Jorge Cort\'es}
\address{Department of Mechanical and Aerospace Engineering, UC San Diego}
\email{cortes@ucsd.edu}

\author[S. Mart{\'\i}nez]{Sonia Mart{\'\i}nez}
\address{Department of Mechanical and Aerospace Engineering, UC San Diego}
\email{soniamd@ucsd.edu}

\author[D. M. Tartakovsky]{Daniel M. Tartakovsky}
\address{Department of Energy Resources Engineering, Stanford University}
\email{tartakovsky@stanford.edu}

\begin{abstract}
Ambiguity sets of probability distributions are used to hedge against uncertainty about the true probabilities of random quantities of interest (QoIs). When available, these ambiguity sets are constructed from both data (collected at the initial time and along the boundaries of the physical domain) and concentration-of-measure results on the Wasserstein metric. To propagate the ambiguity sets into the future, we use a physics-dependent equation governing the evolution of cumulative distribution functions (CDF) obtained through the method of distributions.  This study focuses on the latter step by investigating the spatio-temporal evolution of data-driven ambiguity sets and their associated guarantees when the random QoIs they describe obey hyperbolic partial-differential equations with random inputs.  For general nonlinear hyperbolic equations with smooth solutions,  the CDF equation is used to propagate the upper and lower envelopes of pointwise ambiguity bands. For linear dynamics, the CDF equation  allows us to construct an evolution equation for tighter ambiguity balls. We demonstrate that, in both cases, the ambiguity sets are guaranteed to contain the true (unknown) distributions within a prescribed confidence. 
\end{abstract}

\title[Ambiguity Sets for Hyperbolic Conservation Laws]{Dynamics of Data-driven 
Ambiguity Sets for Hyperbolic Conservation Laws with Uncertain Inputs}

\keywords{Uncertainty quantification, Wasserstein ambiguity sets, method of 
distributions}

\subjclass[2010]{35R60, 60H15, 68T37, 90C15, 90C90}

\thanks{*Work supported  by  the  DARPA Lagrange program through award 
N66001-18-2-4027. Both first authors contributed equally.}

\maketitle

\section{Introduction}
Hyperbolic conservation laws describe a wide spectrum of engineering applications ranging from multi-phase flows~\cite{buckley1942mechanism} to networked traffic~\cite{lebacque2005first}. The underlying dynamics is described by first-order hyperbolic partial differential equations (PDEs) with non-negligible parametric uncertainty, induced by factors such as limited and/or noisy measurements and random fluctuations of environmental attributes. Decisions based, in whole or in part, on predictions obtained from such models have to account for this uncertainty. The decision maker often has no distributional knowledge of the parametric uncertainties affecting the model and uses data---often noisy and insufficient---to make inferences about these distributions. Robust stochastic programming~\cite{DB-DBB-CC:11} calls for a quantifiable description of sets of probability measures, termed ambiguity sets, that contain the true (yet unknown) distribution with high confidence (e.g.,~\cite{pflug2007ambiguity,esfahani2018data,shapiro2004class}). 
The availability of such sets underpins distributionally robust optimization (DRO) formulations~\cite{DB-DBB-CC:11,shapiro2017distributionally} that are able of hedging against these uncertainties.
%
Ambiguity sets are typically defined either through moment constraints~\cite{ED-YY:10} or statistical metric-like notions such as $\phi$-divergences~\cite{AB-DD-AD-BM-GR:13} and Wasserstein metrics~\cite{esfahani2018data}, which allow the designer to identify distributions that are close to the nominal distribution 
in the prescribed metric.   
Ideally, ambiguity sets should be rich enough to contain the true distribution with high probability; be amenable to tractable reformulations; capture distribution variations relevant to the optimization problem without being overly conservative; and be data-driven. 
Wasserstein ambiguity sets have emerged as an appropriate choice because of two reasons. First, they provide computationally convenient dual reformulations of the associated DRO problems~\cite{esfahani2018data,RG-AJK:16}. Second, they penalize horizontal dislocations of the distributions~\cite{FS:15}, which considerably affect solutions of the stochastic optimization problems. Furthermore, 
data-driven Wasserstein ambiguity sets are accompanied by finite-sample guarantees of containing the true distribution with high confidence~\cite{NF-AG:15,SD-MS-RS:13,JW-FB:19}, resulting in DRO problems with prescribed out-of-sample performance. Our recent work~\cite{DB-JC-SM:19-tac,DB-JC-SM:19} has  explored  how ambiguity sets change under deterministic flow maps generated by ordinary differential equations, and used this information in dynamic DRO formulations. 
For these reasons, Wasserstein DRO formulations are utilized in a wide range of applications including distributed optimization~\cite{AC-JC:20-tac}, machine learning~\cite{JB-YK-KM:19}, traffic control~\cite{DL-DF-SM:19-ecc}, power systems~\cite{YG-KB-ED-ZH-THS:18}, and logistics~\cite{RJ-MR-GX:19}. 

We consider two types of input ambiguity sets. The first is based on Wasserstein balls, whereas the second exploits CDF bands that contain the CDF of the true distribution with high probability. Our focus is on the spatio-temporal evolution of data-driven ambiguity sets (and their associated guarantees) 
when the random quantities they describe obey hyperbolic PDEs with random inputs. 
Many techniques can be used to propagate uncertainty affecting the inputs of a stochastic PDE to its solution. We use the method of distributions (MD)~\cite{tartakovsky2017method}, which yields a deterministic evolution equation for the single-point cumulative distribution function (CDF) of a model output~\cite{boso-2014-cumulative}. This method provides an efficient alternative to numerically demanding Monte Carlo simulations (MCS), which require multiple solutions of the PDE with repeated realizations of the random inputs. It is ideal for hyperbolic problems, for which other techniques (such us stochastic finite elements and stochastic collocation) can be slower than MCS~\cite{boso-2019-hyperbolic}. 
In particular, when uncertainty in initial and boundary conditions is propagated by a hyperbolic deterministic PDE with a smooth solution, MD yields an exact CDF equation~\cite{venturi-2013-exact, boso-2014-cumulative}.
Regardless of the uncertainty propagation technique, data can be used both to characterize the statistical properties of the input distributions and reduce uncertainty by assimilating observations into probabilistic model predictions via Bayesian techniques, e.g.,~\cite{wikle2007bayesian}. 

%
%

The contributions of our study are threefold. First, we use data collected at the initial time and along the boundaries of the physical domain to build ambiguity sets that enjoy rigorous finite-sample guarantees for the input distributions. Specifically, we construct data-driven pointwise ambiguity sets for the \textit{unknown} true distributions of parameterized random inputs, by transferring finite-sample guarantees for their associated Wasserstein distance in the parameter domain. The resulting ambiguity sets account for  empirical information (from the data) without introducing arbitrary hypotheses on the distribution of the random parameters. Second, we design tools to propagate the ambiguity sets throughout space and time. The MD is employed to propagate each ambiguous distribution within the data-driven input ambiguity sets according to a physics-dependent CDF equation. For linear dynamics, we use the CDF equation to construct an evolution equation for the radius of ambiguity balls centered at the empirical distributions in the 1-Wasserstein (a.k.a. Kantorovich) metric. For a wider class of nonlinear hyperbolic equations with smooth solutions, we exploit the CDF equation to propagate the upper and lower envelopes of pointwise ambiguity bands. These are formed through upper and lower envelopes that contain all CDFs up to an assigned 1-Wasserstein distance from the empirical CDF. Third, we use these uncertainty propagation tools to obtain pointwise ambiguity sets across all locations of the space-time domain that contain their true distributions with prescribed probability. Our method can handle both types of input ambiguity sets (based on either Wasserstein balls or CDF bands), while maintaining their confidence guarantees upon propagation. This allows the decision maker to  map their physics-driven stretching/shrinking under the PDE dynamics.

\section{Preliminaries}\label{sec:prelims}
Let $\|\cdot\|$ and $\|\cdot\|_{\infty}$ denote the Euclidean and infinity norm in $\Rat{n}$, respectively. The diameter of a set $S\subset\Rat{n}$ is defined as ${\rm diam}(S):=\sup\{\|x-y\|_{\infty}\,|\,x,y\in S\}$. The Heaviside function $\H:\Rat{}\to\Rat{}$ is $\H(x)=0$ for $x<0$ and $\H(x)=1$ for $x\ge 0$. We denote by $\B(\Rat{d})$ the Borel $\sigma$-algebra on $\Rat{d}$, and by $\P(\Rat{d})$ the space of probability measures on $(\Rat{d},\B(\Rat{d}))$. 
For $\mu\in\P(\Rat{d})$, its support is the closed 	set ${\rm supp}(\mu):=\{x\in\Rat{d}\,|\,\mu(U)>0\;\text{for each neighborhood}\;U\;{\rm of}\;x\}$ or, equivalently, the smallest closed set with measure one. We denote by $\Cdf[P]$ the cumulative distribution function associated with the probability measure $P$ on $\Rat{}$ and by $\CD(I)$ the set of all CDFs of scalar random variables whose induced probability measures are supported on the interval  $I\subset\mathbb R$. Given $p\ge 1$, $\P_p(\Rat{d}):=\{\mu\in\P(\Rat{d})\,|\,\int_{\Rat{d}}\|x\|^pd\mu<\infty\}$ is the set of probability measures in $\P(\Rat{d})$ with finite $p$-th moment. The  Wasserstein distance of $\mu,\nu\in\P_p(\Rat{d})$ is 
\begin{align*}
W_p(\mu,\nu):=\Big(\inf_{\pi\in\mathcal M(\mu,\nu)} \Big\{\int_{\Rat{d}\times\Rat{d}}\|x-y\|^p \pi(dx,dy)\Big\}\Big)^{1/p},
\end{align*}
where $\mathcal M(\mu,\nu)$ is the set of all probability measures on $\Rat{d}\times\Rat{d}$ with marginals $\mu$ and $\nu$, respectively, also termed couplings. For scalar random variables, the Wasserstein distance $W_p$ between two distributions $\mu$ and $\nu$ with CDFs $F$ and $G$ is, cf.~\cite{CV:03},
$W_p(\mu,\nu)=\big(\int_0^1|F^{-1}(t)-G^{-1}(t)|^p dt\big)^{1/p}$,
where $F^{-1}$ denotes the generalized inverse of $F$, $F^{-1}(y)=\inf\{t\in\Rat{}\,|\,F(t)>y\}$. For $p=1$, one can use the representation
\begin{align}\label{eq:W1def}
W_1(\mu,\nu)=\int_\Rat{}|F(s)-G(s)|ds.
\end{align}
Given two measurable spaces $(\Omega,\F)$ and $(\Omega',\F')$, and a measurable function $\Psi$ from $(\Omega,\F)$ to $(\Omega',\F')$, the push-forward map $\Psi_{\#}$ assigns to each measure $\mu$ in $(\Omega,\F)$ a new measure $\nu$ in $(\Omega',\F')$ defined by $\nu:=\Psi_{\#}\mu$ iff $\nu(B)=\mu(\Psi^{-1}(B))$ for all $B\in\F'$. The map $\Psi_{\#}$ is linear and satisfies $\Psi_{\#}\delta_{\omega}=\delta_{\Psi(\omega)}$ with $\delta_{\omega}$ the Dirac mass at $\omega\in\Omega$.

\section{Problem formulation} \label{sec:pf}

We consider a hyperbolic model for~$u(\mathbf x,t)$,
\begin{align}\label{eq:tran}
& \frac{\partial u}{\partial t} + \nabla \cdot \left( \mathbf q(u; \boldsymbol \theta_q) \right) = 
r(u;  \boldsymbol \theta_r), \quad \mathbf x \in \Omega, \quad t>0 
\end{align}
subject to initial and boundary conditions
\begin{align}\label{eq:ICsBCs}
& u(\mathbf x,t=0) = u_0(\mathbf x), \quad \mathbf x \in \Omega \notag \\
& u(\mathbf x,t) = u_b(\mathbf x,t), \quad \mathbf x \in \Gamma, \quad t>0,
\end{align}
restricting ourselves to problems with smooth solutions. Equation~\eqref{eq:tran}, with the given flux $\mathbf q(u; \boldsymbol \theta_q)$ and source term $r(u;\boldsymbol \theta_r)$, is defined on a $d$-dimensional semi-infinite spatial domain $\Omega \subset \mathbb R^d$, and by the parameters $\boldsymbol \theta_q$ and $\boldsymbol \theta_r$, that can be spatially and/or temporally varying. The boundary function $u_b(\mathbf x,t)$ is prescribed at the upstream boundary $\Gamma$. For the sake of brevity, we do not consider different types of boundary conditions, although the procedure can be adjusted accordingly. Randomness in the initial and/or boundary conditions, $u_0(\mathbf x)$ and $u_b(\mathbf x, t)$, renders~\eqref{eq:tran} stochastic. We make the following hypotheses.

\begin{assumption}[Deterministic dynamics]
\label{assumption:exact:dynamics}
We assume all parameters in \eqref{eq:tran} (i.e., all physical parameters specifying the flux $\mathbf q$, $\boldsymbol \theta_q$, and the source term $r$, $\boldsymbol \theta_r$) are deterministic, and the flux $\mathbf q$ is divergence-free once evaluated for a specific value of $u(\mathbf x,t) = U$, $\nabla \cdot \mathbf q(U; \boldsymbol \theta_q) = 0$. 
\end{assumption}

\begin{assumption}[Existence and uniqueness of local solutions within a time horizon]
\label{assumption:unif:existence}
There exists $T\in (0,\infty]$ such that for each initial and boundary condition 
from their probability space, the solution $u(\mathbf x,t)$ of~\eqref{eq:tran} is smooth  
and defined on $\Omega\times[0,T)$.
\end{assumption}

Regarding \cref{assumption:unif:existence},
we refer to~\cite{racke1992lectures} for a theoretical treatment of local existence theorems.
In the \textit{absence of direct access to the distribution of the initial and boundary conditions}, we analyze their samples from independent realizations of~\eqref{eq:ICsBCs}. Specifically, we measure the initial condition $u_0$ for all $\mathbf x\in\Omega$ and get continuous measurements of $u_b$ at each boundary point for all times (for instance, in a traffic flow scenario with $\Omega$ representing a long highway segment, a traffic helicopter might pass above the area at the same time each morning and take a photo from the segment that provides the initial condition for the traffic density $u$, whereas $u$ at the segment boundary is continuously measured by a single-loop detector. Assumptions~\ref{assumption:exact:dynamics} and~\ref{assumption:unif:existence} require traffic conditions far from congestion, with  deterministic parameters describing the flow, specifically maximum velocity and maximum traffic density). 
We are interested in exploiting
the samples to construct ambiguity sets that contain the temporally- and spatially-variable one-point probability distributions of $u_0(t)$ and $u_b(\mathbf x,t)$ with high confidence. We consider initial and boundary conditions that are specified by a finite number of random parameters.  

\begin{assumption}[Input parameterization]
\label{assumption:parameterization}
The initial and boundary conditions are parameterized by $\mathbf a:=(a_1,\ldots,a_n)$ from a compact subset of $\Rat n$, i.e., $u_0(\mathbf x)\equiv u_0(\mathbf x;\mathbf a)$ and $u_b(\mathbf x,t)\equiv u_b(\mathbf x,t;\mathbf a)$. The parameterizations are  globally Lipschitz with respect to $\mathbf a$ for each initial position $\mathbf x$ and boundary pair $(\mathbf x,t)$. Specifically, 
\begin{subequations}
	\begin{align}
	|u_0(\mathbf x;\mathbf a)-u_0(\mathbf x;\mathbf a')| & \le 
	L_0(\mathbf x)\|\mathbf a-\mathbf a'\|\quad \forall \mathbf 
	x\in\Omega,\quad \mathbf a,\mathbf a'\in\Rat{n}, \label{Lip:constant:init} \\ 
	|u_b(\mathbf x,t;\mathbf a)-u_b(\mathbf x,t;\mathbf a')| & \le 
	L_b(\mathbf x,t)\|\mathbf a-\mathbf a'\|\quad \forall \mathbf 
	x\in\Gamma,\quad t\ge 0, \quad \mathbf a,\mathbf a'\in\Rat n, \label{Lip:constant:bnd}
	\end{align}
\end{subequations}
for some continuous functions $L_0:\Omega\to\RgeO$ and $L_b:\Gamma\times\RgeO\to\RgeO$.
\end{assumption}

We denote by  $\supscr{P}{true}_{\mathbf a}$ the distribution of the parameters in $\Rat n$, by $\supscr{P}{true}_{u_0(\mathbf x)}$ the induced distribution of
$u_0(\mathbf x;\mathbf a)$ at the spatial point $\mathbf x$, and by 
$\supscr{P}{true}_{u_b(\mathbf x,t)}$ the distribution of $u_b(\mathbf 
x,t;\mathbf a)$ at each boundary point $\mathbf x$ and time $t\ge 0$. We use the 
superscript `${\rm true}$' to emphasize that we refer to the 
corresponding true distributions, that are unknown. We denote by 
$\supscr{F}{true}_{u_0(\mathbf x)}\equiv\Cdf\big[\supscr{P}{true}_{u_0(\mathbf 
x)}\big]$ and $\supscr{F}{true}_{u_b(\mathbf x,t)}\equiv\Cdf \big[\supscr{P}{true}_{u_b(\mathbf x,t)}\big]$ their associated CDFs and make the following hypothesis for data assimilation. 

\begin{assumption}[Input samples] 
\label{assumption:sampling}
We have access to $N$ independent pairs of initial and boundary condition samples, $(u_0^1,u_b^1),\ldots,(u_0^N,u_b^N)$, generated by corresponding 
independent realizations $\mathbf a^1,\ldots,\mathbf a^N$ of the parameters 
in Assumption~\ref{assumption:parameterization}.
\end{assumption}

Under these hypotheses, we seek to derive \textit{pointwise characterizations} 
of ambiguity sets for the CDF of $u$ at each location $(\mathbf x,t)$ in space and time, starting with their characterization for the initial and boundary data. We are interested in defining the ambiguity sets in terms of plausible CDFs at each $(\mathbf x,t)$, and exploiting the known dynamics~\eqref{eq:tran} to propagate the one-point CDFs of $u(\mathbf x,t)$ in space and time.   

\begin{probf}
Given $\beta$, we seek to determine sets $\P_{\mathbf 
x}^0$, $\mathbf x\in\Omega$ and $\P_{\mathbf x,t}^b$, $(\mathbf 
x,t)\in\Gamma\times\RgeO$ of CDFs that contain the corresponding true CDFs $\supscr{F}{true}_{u_0(\mathbf x)}$ and $\supscr{F}{true}_{u_b(\mathbf x,t)}$ for the initial and boundary conditions, respectively, with confidence $1-\beta$,
\begin{align*}
\bP(\{\supscr{F}{true}_{u_0(\mathbf x)}\in \P_{\mathbf x}^0\;\forall \mathbf 
x\in\Omega\}\cap\{\supscr{F}{true}_{u_b(\mathbf x,t)}\in \P_{\mathbf 
x,t}^b\;\forall 
(\mathbf x,t)\in\Gamma\times\RgeO\})\ge 1-\beta.
\end{align*} 
We further seek to leverage the PDE dynamics to propagate the  
ambiguity sets of the initial and boundary data and obtain a pointwise 
characterization of ambiguity sets $\P_{\mathbf x,t}$ containing 
the CDF of $u(\mathbf x,t)$ at each $\mathbf x\in \Omega$ and $t\in[0,T)$ with 
confidence $1-\beta$,   
\begin{align*}
\bP(\supscr{F}{true}_{u(\mathbf x,t)}\in \P_{\mathbf x,t}\;\forall (\mathbf 
x,t)\in\Omega\times[0,T))\ge 1-\beta.
\end{align*}    
\end{probf}
Section~\ref{sec:init:guarantees} exploits the compactly supported parameterization of the initial and boundary data to build ambiguity sets which enjoy rigorous finite-sample guarantees. Section~\ref{sec:CDF:diff} derives a deterministic PDE for the CDF of $u(\mathbf x,t)$, which enables the investigation of how the difference between CDFs (and, by integration, their Wasserstein distance) evolves in space and time. Section~\ref{sec:ambiguity:evolution} characterizes how the input ambiguity sets propagate in space and time under the same confidence guarantees. 


\section{Data-driven ambiguity sets for inputs}\label{sec:init:guarantees}

%
Using \cref{assumption:parameterization,assumption:sampling}, at each $\mathbf x\in\Omega$ and boundary pair $(\mathbf 
x,t)\in\Gamma\times\RgeO$, we define empirical distributions 
\begin{align*}
\widehat P_{u_0(\mathbf x)}^N \equiv 
\widehat P_{u_0(\mathbf x)}^N(\mathbf a^1,\ldots,\mathbf a^N)
& := \frac{1}{N}\sum_{i=1}^N\delta_{u_0^i(\mathbf x)}\equiv 
\frac{1}{N}\sum_{i=1}^N\delta_{u_0(\mathbf x;\mathbf a^i)},\\ 
\widehat P_{u_b(\mathbf x,t)}^N \equiv
\widehat P_{u_b(\mathbf x,t)}^N (\mathbf a^1,\ldots,\mathbf a^N)
& := \frac{1}{N}\sum_{i=1}^N\delta_{u_b^i(\mathbf x,t)}
\equiv\frac{1}{N}\sum_{i=1}^N\delta_{u_b(\mathbf x,t;\mathbf a^i)}, 
\end{align*} 
with associated CDFs $\widehat F_{u_0(\mathbf x)}^N:=\Cdf\big[\widehat P_{u_0(\mathbf x)}^N\big]$ and $\widehat F_{u_b(\mathbf x,t)}^N:=\Cdf\big[\widehat P_{u_b(\mathbf x,t)}^N\big]$. We employ these empirical distributions to build \textit{pointwise ambiguity sets} based on concent\-ration-of-measure results for the 1-Wasserstein distance.
Specifically, 
we exploit compactness of the initial and boundary data parameterization together with the following confidence guarantees about the Wasserstein distance between the empirical and true distribution of compactly supported random variables (see~\cite{DB-JC-SM:19}).  

\begin{lemma}[Ambiguity radius]
\label{lemma:ambiguity:radius}
Let $(X_i)_{i\in\bN}$ be a sequence of i.i.d. $\Rat{n}$-valued random 
variables that have a compactly supported distribution $\mu$ and let   
$\rho:={\rm diam}({\rm supp}(\mu))/2$. Then, for $p\ge 1$, $N\ge 1$, and 
$\epsilon>0$, $\bP(W_p(\widehat{\mu}^N,\mu)\le\epsilon_N(\beta,\rho))\ge 
1-\beta$, where
\begin{align}\label{epsN:dfn}
\epsilon_{N}(\beta,\rho):=
\begin{cases}
\left(\frac{\ln(C\beta^{-1})}{c}\right)^{\frac{1}{2p}}\frac{\rho}{N^{\frac{1}{2p}}},
& {\rm if}\; p>n/2, \\
h^{-1}\left(\frac{\ln(C\beta^{-1})}{cN}\right)^{\frac{1}{p}}\rho, & {\rm if}\; 
p=n/2, \\
\left(\frac{\ln(C\beta^{-1})}{c}\right)^{\frac{1}{n}}\frac{\rho}{N^{\frac{1}{n}}},
& {\rm if}\; p<n/2,
\end{cases}
\end{align}
$\widehat{\mu}^N:=\frac{1}{N}\sum_{i=1}^N\delta_{X_i}$, the constants 
$C$ and $c$ depend only on $p$, $n$, and $h^{-1}$ is the inverse of 
$h(x)= x^2 / [\ln(2+1/x)]^2$, $x>0$. 
\end{lemma}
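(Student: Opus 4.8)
The plan is to derive \cref{lemma:ambiguity:radius} from the finite-sample concentration inequality of Fournier and Guillin~\cite{NF-AG:15} for empirical measures. First I would reduce, by an affine change of variables, to the canonical case of a distribution supported in the unit ball --- so that the relevant constants become universal in $n$ and $p$ --- and then invert that tail estimate to read off the radius $\epsilon_N(\beta,\rho)$.

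\textbf{Reduction by rescaling.} Fix $x_0\in\mathrm{supp}(\mu)$ and set $T(x):=(x-x_0)/(2\rho)$. Since $\mathrm{diam}(\mathrm{supp}(\mu))=2\rho$, the push-forward $\tilde\mu:=T_{\#}\mu$ is supported in the closed unit ball. Using that $\Psi_{\#}$ is linear with $\Psi_{\#}\delta_{\omega}=\delta_{\Psi(\omega)}$ (Section~\ref{sec:prelims}), if the $X_i$ are i.i.d.\ $\mu$ then the $T(X_i)$ are i.i.d.\ $\tilde\mu$ and $\widehat{\tilde\mu}^N:=\tfrac1N\sum_{i=1}^N\delta_{T(X_i)}=T_{\#}\widehat\mu^N$. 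As $T$ is a homothety of ratio $1/(2\rho)$, the map $(T,T)$ sends couplings of $(\mu,\nu)$ to couplings of $(\tilde\mu,\tilde\nu)$ and rescales every transport cost by $(2\rho)^{-p}$, whence $W_p(\widehat\mu^N,\mu)=2\rho\,W_p(\widehat{\tilde\mu}^N,\tilde\mu)$. I also record the deterministic bound $W_p(\widehat\mu^N,\mu)\le\mathrm{diam}(\mathrm{supp}(\mu))=2\rho$, which makes the claim vacuous once $\epsilon_N(\beta,\rho)\ge 2\rho$; only the range $\epsilon_N(\beta,\rho)<2\rho$ needs the probabilistic estimate.

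\textbf{Concentration and inversion.} Compact support gives $\tilde\mu$ finite exponential moments of all orders, so the sharpest (purely stretched-exponential) form of the Fournier--Guillin concentration bound~\cite{NF-AG:15} applies to $\tilde\mu$, with constants $C,c>0$ depending only on $n,p$: for $N\ge1$ and $0<\delta\le1$,
\[
\bP\big(W_p(\widehat{\tilde\mu}^N,\tilde\mu)>\delta\big)\le C\exp\!\big(-cN\,g(\delta)\big),\qquad
g(\delta):=\begin{cases}
\delta^{2}, & p>n/2,\\
\delta^{2}/[\ln(2+1/\delta)]^{2}, & p=n/2,\\
\delta^{n/p}, & p<n/2.
\end{cases}
\]
Since $g$ is continuous and increasing near $0$, solving $C\exp(-cN g(\delta))=\beta$ gives $\delta^{\ast}=g^{-1}\!\big(\ln(C\beta^{-1})/(cN)\big)$, hence $\bP(W_p(\widehat{\tilde\mu}^N,\tilde\mu)\le\delta^{\ast})\ge1-\beta$; explicitly $\delta^{\ast}$ equals $(\ln(C\beta^{-1})/(cN))^{1/2}$, $h^{-1}(\ln(C\beta^{-1})/(cN))$, and $(\ln(C\beta^{-1})/(cN))^{p/n}$ in the three regimes, with $h(x)=x^2/[\ln(2+1/x)]^2$ as in the statement. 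Multiplying by $2\rho$; then --- since $p\ge1$ and $\delta^{\ast}\le1$ in the nontrivial range --- enlarging $\delta^{\ast}$ to $(\delta^{\ast})^{1/p}$; and finally absorbing the leftover numerical factors into $c$ (e.g.\ $c\mapsto c/2^{2p}$ when $p>n/2$), one arrives at $\bP(W_p(\widehat\mu^N,\mu)\le\epsilon_N(\beta,\rho))\ge 1-\beta$ with $\epsilon_N$ exactly as in~\eqref{epsN:dfn}; in the complementary range (where $\delta^{\ast}>1$) the same bound is immediate from $W_p(\widehat\mu^N,\mu)\le 2\rho$.

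\textbf{Main obstacle.} The substantive step is having the Fournier--Guillin estimate in exactly this trichotomous form --- in particular, confirming that for a compactly supported (hence exponential-moment) law no polynomial correction term survives and, after the support has been normalized to the unit ball, the constants depend on $n$ and $p$ alone. The borderline exponent $p=n/2$ is the only delicate bookkeeping: one carries the logarithmic factor through $h$, uses that $h$ is strictly increasing (hence invertible) near the origin, and checks that the homothety factor is absorbable via an estimate of the shape $h(2^{p}x)\le\kappa(n,p)\,h(x)$ for small $x$. The remaining ingredients --- the scaling identity for $W_p$ under $T$, the algebraic inversion of the exponential tail, and the edge-case argument through $W_p\le\mathrm{diam}(\mathrm{supp}(\mu))$ --- are routine.
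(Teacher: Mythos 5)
Your proposal is correct and follows the same route as the source on which the paper relies: the paper itself gives no proof of \cref{lemma:ambiguity:radius}, importing it from~\cite{DB-JC-SM:19}, which obtains it exactly as you do by normalizing the compactly supported law to the unit ball and inverting the Fournier--Guillin deviation bound of~\cite{NF-AG:15}. One small caution on the quotation of that bound: it controls $\bP(W_p^p(\widehat{\mu}^N,\mu)\ge x)$ rather than $\bP(W_p(\widehat{\mu}^N,\mu)\ge x)$, so the $1/p$-th power you introduce when ``enlarging'' $\delta^{\ast}$ is not an optional relaxation but the exact substitution $x=\delta^{\ast}$ together with $W_p=(W_p^p)^{1/p}$ --- which is precisely where the exponents $\tfrac{1}{2p}$ and $\tfrac{1}{n}$ in~\eqref{epsN:dfn} come from; with that reading your argument goes through as written.
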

%
This result quantifies the radius $\epsilon_{N}(\beta,\rho)$ of an ambiguity ball that contains the true distribution with high probability. The radius decreases with the number of samples and can be tuned by the confidence level $1-\beta$, allowing the decision maker to choose the desired level of conservativeness.
The explicit determination of $c$ and $C$ in \eqref{epsN:dfn} through the analysis in~\cite{NF-AG:15} for the whole spectrum of data dimensions $n$ and Wasserstein exponents $p$ can become cumbersome. Nevertheless, \eqref{epsN:dfn} provides explicit ambiguity radius ratios for any pair of sample sizes once a confidence level is fixed. Recall that, according to \cref{assumption:parameterization}, the mapping of the parameters to the initial and boundary data is globally Lipschitz. The following result, whose proof is given in \cref{sec:app:to:sec:init:guarantees}, is useful to quantify the Wasserstein distance between the true and empirical distribution at each input location.

\begin{lemma}[Wasserstein distance under Lipschitz maps] 
\label{lemma:Lipschitz:map}
If  $T:\Rat{n}\to\Rat{m}$ is Lipschitz with constant $L>0$, namely, $\|T(x)-T(y)\|\le L\|x-y\|$, then for any pair of distributions $\mu$, $\nu$ on $\Rat{n}$ it holds that $W_p(\mu,\nu)\le LW_p(T_{\#}\mu,T_{\#}\nu)$. 
\end{lemma}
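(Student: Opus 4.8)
The displayed inequality compares the domain-space distance $W_p(\mu,\nu)$ with the codomain-space distance $W_p(T_{\#}\mu,T_{\#}\nu)$, so the natural route is to start from an optimal coupling $\sigma\in\mathcal M(T_{\#}\mu,T_{\#}\nu)$ on $\Rat{m}\times\Rat{m}$ and lift it to a coupling of $\mu$ and $\nu$ on $\Rat{n}\times\Rat{n}$ whose transport cost is controlled by that of $\sigma$. Concretely, I would disintegrate $\mu$ and $\nu$ over the fibers of $T$, writing $\mu=\int\mu_u\,d(T_{\#}\mu)(u)$ and $\nu=\int\nu_v\,d(T_{\#}\nu)(v)$ with $\mu_u,\nu_v$ supported on $T^{-1}(u),T^{-1}(v)$, and then glue them along $\sigma$ to form $\pi:=\int\mu_u\otimes\nu_v\,d\sigma(u,v)$, which is a valid coupling of $\mu$ and $\nu$ and hence admissible in the infimum defining $W_p(\mu,\nu)$.

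The remaining step would be to bound the transport cost $\int\|x-y\|^p\,d\pi$ by $L^p\int\|u-v\|^p\,d\sigma=L^p W_p^p(T_{\#}\mu,T_{\#}\nu)$. For $x\in T^{-1}(u)$ and $y\in T^{-1}(v)$ one has $\|T(x)-T(y)\|=\|u-v\|$ exactly, so the per-pair estimate required to close the argument is $\|x-y\|\le L\|T(x)-T(y)\|$, that is, a \emph{co-Lipschitz} (lower) bound on $T$, equivalently bi-Lipschitzness of $T$ onto its image.

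The main obstacle is that \cref{assumption:parameterization} and the hypothesis of the lemma provide only the \emph{upper} Lipschitz bound $\|T(x)-T(y)\|\le L\|x-y\|$, which controls distances in the opposite direction, and no co-Lipschitz bound is available (the parameterization maps $\Rat{n}$ into scalar data and is in general many-to-one). In fact the displayed inequality as written does not hold in this generality: a constant map $T$ is $L$-Lipschitz yet makes $T_{\#}\mu=T_{\#}\nu$, so $W_p(T_{\#}\mu,T_{\#}\nu)=0$ while $W_p(\mu,\nu)$ is generically positive. What the upper bound \emph{does} yield, by instead pushing an optimal coupling of $(\mu,\nu)$ forward through $T\times T$ and applying the Lipschitz bound pointwise in the integrand, is the contraction $W_p(T_{\#}\mu,T_{\#}\nu)\le L\,W_p(\mu,\nu)$. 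This reverse inequality is precisely what Section~\ref{sec:init:guarantees} requires, since with $T=u_0(\mathbf x;\cdot)$, $\mu=\supscr{P}{true}_{\mathbf a}$ and $\nu=\widehat P^N$ it transfers the parameter-space ambiguity radius of \cref{lemma:ambiguity:radius} to the induced data distributions $\supscr{P}{true}_{u_0(\mathbf x)}$ and $\supscr{P}{true}_{u_b(\mathbf x,t)}$, uniformly in $\mathbf x$ through a single parameter-space event. I therefore read the displayed bound as having the two Wasserstein distances inadvertently swapped, and would prove the contraction $W_p(T_{\#}\mu,T_{\#}\nu)\le L\,W_p(\mu,\nu)$, the version consistent with both the Lipschitz hypothesis and the use made of the lemma.
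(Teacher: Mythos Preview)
Your reading is correct: the displayed inequality in the statement has the two Wasserstein distances swapped, and the paper's own proof in fact establishes the contraction $W_p(T_{\#}\mu,T_{\#}\nu)\le L\,W_p(\mu,\nu)$ by precisely the method you describe at the end---push forward an optimal coupling of $(\mu,\nu)$ through $\widehat T=(T,T)$, verify that $\widehat T_{\#}\pi$ has marginals $T_{\#}\mu$ and $T_{\#}\nu$, and apply the Lipschitz bound inside the transport integral. Your constant-map counterexample and your observation that the contraction direction is exactly what \cref{prop:init:bnd:ambiguity:sets} needs are both on point.
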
 
Using \cref{lemma:ambiguity:radius,lemma:Lipschitz:map} together with the finite-sample guarantees in the parameter domain, we next obtain a characterization of initial and boundary value ambiguity sets through pointwise Wasserstein balls. To express the ambiguity sets in terms of CDFs, we will interchangeably denote by $W_p(F_{X_1},F_{X_2})\equiv W_p(P_{X_1},P_{X_2})$ the Wasserstein distance between any two scalar random variables $X_1$, $X_2$ with distributions $P_{X_1}$, $P_{X_2}$ and associated CDFs $F_{X_1}=\Cdf[P_{X_1}]$, $F_{X_2}=\Cdf[P_{X_2}]$.

\begin{proposition}[Input ambiguity sets]
\label{prop:init:bnd:ambiguity:sets}
Assume that $N$ pairs of input samples are collected according to \cref{assumption:sampling} and let 
\begin{align} \label{rho:a}
\rho_{\mathbf a}:={\rm diam}({\rm supp}(\supscr{P}{true}_{\mathbf a}))/2
\end{align}
and $\bar{\mathbf a} \in\Rat{n}$ such that $\|\mathbf a-\bar{\mathbf a}\|_{\infty}\le\rho_{\mathbf a}$ for all $\mathbf a\in {\rm supp}(\supscr{P}{true}_{\mathbf a})$. Given a confidence level $1-\beta$, define the ambiguity sets 
\begin{align*}
\P_{\mathbf x}^0 & := \big\{F\in \CD([\alpha_0(\mathbf x),\gamma_0(\mathbf 
x)])\,|\,W_1(\widehat F_{u_0(\mathbf x)}^N,F)\le L_0(\mathbf 
x)\epsilon_N(\beta,\rho_{\mathbf a})\big\} \\
\P_{\mathbf x,t}^b & := \big\{F\in \CD([\alpha_b(\mathbf x,t),\gamma_b(\mathbf x,t)])\,|\,W_1(\widehat F_{u_b(\mathbf x,t)}^N,F)\le L_b(\mathbf x,t) \epsilon_N(\beta,\rho_{\mathbf a})\big\}, 
\end{align*}
for $\mathbf x\in\Omega$ and $\mathbf x\in\Gamma$, $t\ge 0$, respectively, 
where 
\begin{subequations} 
\begin{align}
[\alpha_0(\mathbf x),\gamma_0(\mathbf x)] & := [u_0(\mathbf x;\bar{\mathbf a})-\sqrt{n}L_0(\mathbf 
x)\rho_{\mathbf a},u_0(\mathbf x;\bar{\mathbf a})+\sqrt{n}L_0(\mathbf x)\rho_{\mathbf a}] \label{interv:alpha:gamma:0} \\
[\alpha_b(\mathbf x,t),\gamma_b(\mathbf x,t)] & := [u_b(\mathbf x,t;\bar{\mathbf a})-\sqrt{n}L_b(\mathbf 
x,t)\rho_{\mathbf a},u_b(\mathbf x,t;\bar{\mathbf a})+\sqrt{n}L_b(\mathbf x,t)\rho_{\mathbf a}], \label{interv:alpha:gamma:b} 
\end{align}
\end{subequations}
and $L_0(\mathbf x)$,  $L_b(\mathbf x,t)$, and $\epsilon_N(\beta,\rho_{\mathbf a})$  are given by~\eqref{Lip:constant:init}, \eqref{Lip:constant:bnd}, and 
\eqref{epsN:dfn}. Then, 
\begin{align}
\bP(\{\supscr{F}{true}_{u_0(\mathbf x)}\in \P_{\mathbf x}^0\;\forall 
\mathbf x\in\Omega\}\cap\{\supscr{F}{true}_{u_b(\mathbf x,t)}\in \P_{\mathbf 
x,t}^b\;\forall (\mathbf x,t)\in\Gamma\times\RgeO\}) \ge 1-\beta. 
\label{guarantee:init:bnd} 
\end{align}
\end{proposition}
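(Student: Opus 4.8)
The plan is to reduce the two pointwise guarantees (for the initial and boundary conditions) to the single concentration-of-measure guarantee for the parameter distribution in $\Rat n$ supplied by \cref{lemma:ambiguity:radius}, and then to transfer that guarantee through the Lipschitz parameterizations via \cref{lemma:Lipschitz:map}. First I would apply \cref{lemma:ambiguity:radius} to the i.i.d. sequence $\mathbf a^1,\mathbf a^2,\ldots$ with $p=1$, common distribution $\mu=\supscr{P}{true}_{\mathbf a}$, and $\rho=\rho_{\mathbf a}$ as defined in~\eqref{rho:a}. This yields a single event
\begin{align*}
\mathcal E:=\{W_1(\widehat P_{\mathbf a}^N,\supscr{P}{true}_{\mathbf a})\le\epsilon_N(\beta,\rho_{\mathbf a})\},\qquad \bP(\mathcal E)\ge 1-\beta,
\end{align*}
where $\widehat P_{\mathbf a}^N:=\tfrac1N\sum_{i=1}^N\delta_{\mathbf a^i}$. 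The key point is that $\mathcal E$ depends only on the shared randomness $\mathbf a^1,\ldots,\mathbf a^N$, so a single high-probability event will simultaneously control all spatial points $\mathbf x\in\Omega$ and all boundary pairs $(\mathbf x,t)$; this is exactly what the ``$\forall\mathbf x$'' inside the probability in~\eqref{guarantee:init:bnd} requires.

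Next I would show that $\mathcal E$ is contained in the event on the left of~\eqref{guarantee:init:bnd}, i.e. that on $\mathcal E$ every containment $\supscr{F}{true}_{u_0(\mathbf x)}\in\P_{\mathbf x}^0$ and $\supscr{F}{true}_{u_b(\mathbf x,t)}\in\P_{\mathbf x,t}^b$ holds. Fix $\mathbf x\in\Omega$ and write $T_{\mathbf x}:\Rat n\to\Rat{}$, $T_{\mathbf x}(\mathbf a):=u_0(\mathbf x;\mathbf a)$; by~\eqref{Lip:constant:init} this map is Lipschitz with constant $L_0(\mathbf x)$. Observe that push-forward commutes with the empirical construction, $(T_{\mathbf x})_\#\widehat P_{\mathbf a}^N=\widehat P_{u_0(\mathbf x)}^N$ (using $\Psi_\#\delta_\omega=\delta_{\Psi(\omega)}$ from the preliminaries), and $(T_{\mathbf x})_\#\supscr{P}{true}_{\mathbf a}=\supscr{P}{true}_{u_0(\mathbf x)}$. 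Applying \cref{lemma:Lipschitz:map} with this $T_{\mathbf x}$, $\mu=\widehat P_{\mathbf a}^N$, $\nu=\supscr{P}{true}_{\mathbf a}$ gives, on $\mathcal E$,
\begin{align*}
W_1\big(\widehat F_{u_0(\mathbf x)}^N,\supscr{F}{true}_{u_0(\mathbf x)}\big)=W_1\big((T_{\mathbf x})_\#\widehat P_{\mathbf a}^N,(T_{\mathbf x})_\#\supscr{P}{true}_{\mathbf a}\big)\le L_0(\mathbf x)\,W_1\big(\widehat P_{\mathbf a}^N,\supscr{P}{true}_{\mathbf a}\big)\le L_0(\mathbf x)\,\epsilon_N(\beta,\rho_{\mathbf a}).
\end{align*}
The identical argument with $u_b(\mathbf x,t;\cdot)$ and constant $L_b(\mathbf x,t)$ handles the boundary distributions. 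It then remains only to verify the membership requirement $\supscr{F}{true}_{u_0(\mathbf x)}\in\CD([\alpha_0(\mathbf x),\gamma_0(\mathbf x)])$, i.e. that $\supscr{P}{true}_{u_0(\mathbf x)}$ is supported on the stated interval: for $\mathbf a\in{\rm supp}(\supscr{P}{true}_{\mathbf a})$ we have $\|\mathbf a-\bar{\mathbf a}\|\le\sqrt n\,\|\mathbf a-\bar{\mathbf a}\|_\infty\le\sqrt n\,\rho_{\mathbf a}$, so~\eqref{Lip:constant:init} gives $|u_0(\mathbf x;\mathbf a)-u_0(\mathbf x;\bar{\mathbf a})|\le\sqrt n\,L_0(\mathbf x)\rho_{\mathbf a}$, which is precisely~\eqref{interv:alpha:gamma:0}; likewise for the boundary interval~\eqref{interv:alpha:gamma:b}. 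Combining the two inclusions shows $\mathcal E$ lies inside the event in~\eqref{guarantee:init:bnd}, and monotonicity of $\bP$ finishes the proof.

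I anticipate no deep obstacle; the subtle points are bookkeeping ones. The main thing to get right is the quantifier structure: one must invoke \cref{lemma:ambiguity:radius} \emph{once}, on the parameter samples, and only afterwards pass to the uncountable families indexed by $\mathbf x$ and $(\mathbf x,t)$ — doing it the other way (a union bound over points) would be fatal. A secondary care point is the direction of the inequality in \cref{lemma:Lipschitz:map}: it reads $W_p(\mu,\nu)\le L\,W_p(T_\#\mu,T_\#\nu)$, so to bound the distance of the \emph{pushed-forward} measures one needs this applied to an inverse-type bound; here it is used correctly because the parameterization is the map being pushed forward and the Lipschitz bound goes the right way, but one should double-check that the statement being cited is the ``$W_p$ of images $\le L\cdot W_p$ of preimages'' inequality rather than its converse, since only the former is what makes the displayed chain valid. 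Everything else — commuting push-forward with empirical averages, the $\ell^2$-vs-$\ell^\infty$ factor $\sqrt n$, and measurability of the events — is routine.
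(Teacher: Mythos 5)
Your proposal is correct and follows essentially the same route as the paper's proof: a single application of \cref{lemma:ambiguity:radius} with $p=1$ in the parameter space, transfer of the Wasserstein bound through the pushforward/Lipschitz argument of \cref{lemma:Lipschitz:map}, and verification of the support intervals via $\|\mathbf a-\bar{\mathbf a}\|\le\sqrt n\,\|\mathbf a-\bar{\mathbf a}\|_\infty$. Your flagged concern about the direction of the inequality in \cref{lemma:Lipschitz:map} is well founded --- as literally stated the lemma reads $W_p(\mu,\nu)\le L\,W_p(T_\#\mu,T_\#\nu)$, which is the converse of what is needed, but the lemma's own proof in the appendix actually establishes $W_p(T_\#\mu,T_\#\nu)\le L\,W_p(\mu,\nu)$, which is exactly the direction you (and the paper's proof of this proposition) use.
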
 

\begin{proof}
For the selected confidence $1-\beta$, we get from 
Lemma~\ref{lemma:ambiguity:radius} with $p=1$ that    
\begin{align} \label{guarantee:parameters}
\bP(W_1(\widehat P_{\mathbf a}^N, \supscr{P}{true}_{\mathbf 
a})\le\epsilon_N(\beta,\rho_{\mathbf a}))\ge 1-\beta.
\end{align}  
%
%
Denoting by $u_0[\mathbf x]$ the mapping $\mathbf a\mapsto 
u_0[\mathbf x](\mathbf a):= u_0(\mathbf x;\mathbf a)$, it follows from elementary properties of the pushforward map given in \cref{sec:prelims} that $\widehat P_{u_0(\mathbf x)}^N = u_0[\mathbf x]_\#\widehat P_{\mathbf a}^N$ and $\supscr{P}{true}_{u_0(\mathbf x)} = u_0[\mathbf x]_\#\supscr{P}{true}_{\mathbf a}$, where $\widehat P_{\mathbf a}^N := \frac{1}{N}\sum_{i=1}^N\delta_{\mathbf a^i}$. Thus, we obtain from 
the Lipschitz hypothesis \eqref{Lip:constant:init} and 
Lemma~\ref{lemma:Lipschitz:map} that 
\begin{align*}
W_1(\widehat P_{u_0(\mathbf x)}^N, \supscr{P}{true}_{u_0(\mathbf x)})\le 
L_0(\mathbf x)W_1(\widehat P_{\mathbf a}^N, \supscr{P}{true}_{\mathbf a}), \quad 
\forall \mathbf x\in\Omega.
\end{align*}
Since $\supscr{P}{true}_{u_0(\mathbf x)} = u_0[\mathbf x]_\#\supscr{P}{true}_{\mathbf a}$, we get from \eqref{Lip:constant:init}, \eqref{interv:alpha:gamma:0}, and the selection of $\bar{\mathbf a}$ that $\supscr{P}{true}_{u_0(\mathbf x)}$ is supported on $[\alpha_0(\mathbf x),\gamma_0(\mathbf x)]$, and hence, that $\supscr{F}{true}_{u_0(\mathbf x)}\in\CD([\alpha_0(\mathbf x),\gamma_0(\mathbf x)])$ for all $\mathbf x\in\Omega$. Analogously, we have that 
\begin{align*}
W_1(\widehat P_{u_b(\mathbf x,t)}^N, \supscr{P}{true}_{u_b(\mathbf x,t)})\le 
L_b(\mathbf x,t)W_1(\widehat P_{\mathbf a}^N, \supscr{P}{true}_{\mathbf 
a})
\end{align*}
and $\supscr{F}{true}_{u_b(\mathbf x,t)}\in\CD([\alpha_b(\mathbf x,t),\gamma_b(\mathbf x,t)])$  for all $(\mathbf x,t)\in\Gamma\times\RgeO$. Consequently 
\begin{align*}
& \{W_1(\widehat P_{\mathbf a}^N, \supscr{P}{true}_{\mathbf 
a})\le\epsilon_N(\beta,\rho_{\mathbf a})\}\subset   
\{W_1(\widehat P_{u_0(\mathbf x)}^N, \supscr{P}{true}_{u_0(\mathbf x)})\le 
L_0(\mathbf x)\epsilon_N(\beta,\rho_{\mathbf a})\;\forall \mathbf x\in\Omega\} \\
& \hspace{8em}\cap \{W_1(\widehat P_{u_b(\mathbf x,t)}^N, 
\supscr{P}{true}_{u_b(\mathbf 
x,t)})\le L_b(\mathbf x,t)\epsilon_N(\beta,\rho_{\mathbf a})\;\forall (\mathbf 
x,t)\in\Gamma\times\RgeO\}.
\end{align*}
Thus, since each $\supscr{F}{true}_{u_0(\mathbf x)}\in\CD([\alpha_0(\mathbf x),\gamma_0(\mathbf x)])$ and $\supscr{F}{true}_{u_b(\mathbf x,t)}\in\CD([\alpha_b(\mathbf x,t),\gamma_b(\mathbf x,t)])$, we deduce \eqref{guarantee:init:bnd} from the definitions of the ambiguity sets.
\end{proof}

We next consider an alternative characterization of the ambiguity sets, which enables the exploitation of a propagation tool applicable to a wider class of PDE dynamics, yet at the cost of increased conservativeness.
These ambiguity sets are built using pointwise confidence bands (thereinafter termed \emph{ambiguity bands}), enclosed between upper and lower CDF envelopes that contain the true CDF at each spatio-temporal location with prescribed probability. We rely on the next result, whose proof is given in \cref{sec:app:to:sec:init:guarantees}, providing upper and lower CDF envelopes for any CDF $F$ and distance $\rho$, cf. \cref{fig:CDF:envelope}, so that the CDF of any distribution with 1-Wasserstein distance at most $\rho$ from $F$ is pointwise between these envelopes. 

\begin{figure}[tbh]
\centering
\includegraphics[width=.75\textwidth]{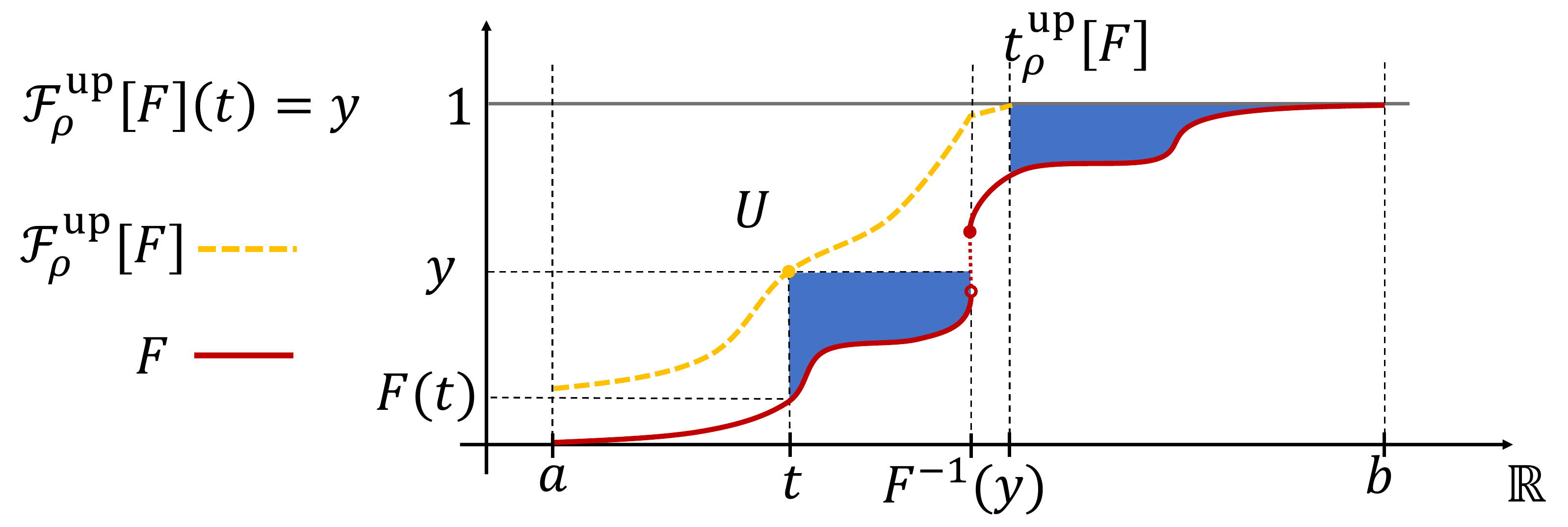}
\caption{Illustration of the upper CDF envelope $\supscr{\F}{up}_\rho[F]$ (in yellow) of $F$ (in red). For each point $(t,y)$ in the graph of $\supscr{\F}{up}_\rho[F]$, the blue area enclosed among the lines parallel to the axes that originate from $(t,y)$ and $F$ is equal to $\rho$.}
\label{fig:CDF:envelope}
\end{figure}


\begin{lemma}[Upper and lower CDF envelopes]\label{lemma:CDF:envelope} 
Let $F\in\CD([a,b])$, define
\begin{align*}
\supscr{t}{up}_\rho[F]\equiv\supscr{t}{up}_{\rho,[a,b]}[F] & 
:=\sup \Big\{\tau\in[a,b]\,\Big|\, \int_\tau^b(1-F(t))dt\ge\rho\Big\} \\
\supscr{t}{low}_\rho[F]\equiv\supscr{t}{low}_{\rho,[a,b]}[F] & 
:=\inf\Big\{\tau\in[a,b]\,\Big|\, \int_a^\tau F(t)dt\ge\rho\Big\}
\end{align*}
for any $0<\rho\le\min\{\int_a^bF(t)dt,\int_a^b(1-F(t))dt\}$, and the corresponding upper and lower CDF envelopes 
$\supscr{\F}{up}_\rho[F]\equiv\supscr{\F}{up}_{\rho,[a,b]}[F]$ and $\supscr{\F}{low}_\rho[F]\equiv\supscr{\F}{low}_{\rho,[a,b]}[F]$ 
\begin{align*}
\supscr{\F}{up}_\rho[F](t) & 
:=\begin{cases}
0, & {\rm if}\;t\in(-\infty,a) \\
\sup\Big\{z\in[F(t),1]\,\big|\, \int_{F(t)}^z(F^{-1}(y)-t)dy\le\rho\Big\}, & 
{\rm if}\;t\in[a,\supscr{t}{up}_\rho[F]) \\
1, & {\rm if}\;t\in[\supscr{t}{up}_\rho[F],\infty),
\end{cases} \\
\supscr{\F}{low}_\rho[F](t) & 
:=\begin{cases}
0, & {\rm if}\;t\in(-\infty,\supscr{t}{low}_\rho[F]) \\
\inf\Big\{z\in[0,F(t)]\,\big|\, \int_z^{F(t)}(t-F^{-1}(y))dy\le\rho\Big\}, & 
{\rm if}\;t\in[\supscr{t}{low}_\rho[F],b) \\
1, & {\rm if}\;t\in[b,\infty). 
\end{cases}
\end{align*}
Then, both  $\supscr{\F}{up}_\rho[F]$ and $\supscr{\F}{low}_\rho[F]$ are continuous CDFs in $\CD([a,b])$ and for any $F'\in\CD([a,b])$ with $W_1(F,F')\le\rho$, it holds that
\begin{align} \label{CDF:sandwich}
\supscr{\F}{low}_\rho[F](t)\le F'(t)\le 
\supscr{\F}{up}_\rho[F](t) , \quad\forall t\in\Rat{}.
\end{align}   
\end{lemma}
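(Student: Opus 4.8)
The plan is to establish three things in turn: that $\supscr{\F}{up}_\rho[F]$ and $\supscr{\F}{low}_\rho[F]$ are well-defined continuous CDFs in $\CD([a,b])$, and then the sandwich inequality \eqref{CDF:sandwich}. I will treat only the upper envelope in detail, since the lower envelope follows by the symmetry $F\mapsto 1-F(-\,\cdot\,)$ (which swaps the roles of $a,b$ and of the two integrals in the hypothesis on $\rho$).

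For well-definedness of $\supscr{\F}{up}_\rho[F]$, the key observation is the geometric identity illustrated in \cref{fig:CDF:envelope}: for fixed $t\in[a,\supscr{t}{up}_\rho[F])$, the function $z\mapsto\int_{F(t)}^z(F^{-1}(y)-t)\,dy$ is continuous and strictly increasing on $[F(t),1]$ (since $F^{-1}(y)\ge t$ there, with strict inequality on a set of positive measure because $t<\supscr{t}{up}_\rho[F]$), starts at $0$ when $z=F(t)$, and at $z=1$ equals $\int_{F(t)}^1(F^{-1}(y)-t)\,dy=\int_t^b(1-F(s))\,ds-(1-F(t))(t-\cdots)$—more cleanly, by the layer-cake/integration-by-parts identity $\int_{F(t)}^1(F^{-1}(y)-t)\,dy=\int_t^b(1-F(s))\,ds\ge\int_{\supscr{t}{up}_\rho[F]}^b(1-F(s))\,ds=\rho$ where the last equality uses continuity of $s\mapsto\int_s^b(1-F)$ and the definition of $\supscr{t}{up}_\rho[F]$. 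Hence the supremum defining $\supscr{\F}{up}_\rho[F](t)$ is attained and gives the unique $z$ with $\int_{F(t)}^z(F^{-1}(y)-t)\,dy=\rho$; in particular $\supscr{\F}{up}_\rho[F](t)\in[F(t),1)$. Monotonicity of $t\mapsto\supscr{\F}{up}_\rho[F](t)$ and continuity (including the matching of the three pieces at $t=a$, where $F(a)=0$ forces the envelope to $0$, and at $t=\supscr{t}{up}_\rho[F]$, where the attained $z$ tends to $1$) then follow from continuity and monotone dependence of the defining integral on $t$; right-continuity and the limits $0,1$ at $\mp\infty$ make it a CDF, and its support lies in $[a,b]$ because it is $0$ on $(-\infty,a)$ and $1$ on $[\supscr{t}{up}_\rho[F],\infty)\subseteq[a,b)\cup\{b\}$.

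For the sandwich bound, fix $F'\in\CD([a,b])$ with $W_1(F,F')\le\rho$ and a point $t$; I want $F'(t)\le\supscr{\F}{up}_\rho[F](t)$. If $t\ge\supscr{t}{up}_\rho[F]$ this is trivial since the right side is $1$, and if $t<a$ then $F'(t)=0$. For $t\in[a,\supscr{t}{up}_\rho[F])$ write $z':=F'(t)$ and suppose for contradiction $z'>\supscr{\F}{up}_\rho[F](t)=:z$, so that by strict monotonicity $\int_{F(t)}^{z'}(F^{-1}(y)-t)\,dy>\rho$. The idea is to lower-bound $W_1(F,F')=\int_0^1|F^{-1}(y)-F'^{-1}(y)|\,dy$ (the quantile representation of $W_1$ from \cref{sec:prelims}) by the mass transported across the vertical line at abscissa $t$: for $y\in(F(t),z')$ one has $F^{-1}(y)\ge t$ while, since $F'(t)=z'>y$ gives $F'^{-1}(y)\le t$, the integrand $|F^{-1}(y)-F'^{-1}(y)|\ge F^{-1}(y)-t$ (using $F'^{-1}(y)\le t\le F^{-1}(y)$). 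Hence $W_1(F,F')\ge\int_{F(t)}^{z'}(F^{-1}(y)-t)\,dy>\rho$, contradicting $W_1(F,F')\le\rho$. (The case $z'\le F(t)$, i.e. $F'(t)\le F(t)$, is immediate since $\supscr{\F}{up}_\rho[F](t)\ge F(t)$.) The lower bound $\supscr{\F}{low}_\rho[F](t)\le F'(t)$ is obtained symmetrically, bounding the transported mass on the other side: for $y\in(F'(t),F(t))$ one has $F'^{-1}(y)\ge t\ge F^{-1}(y)$, giving $W_1(F,F')\ge\int_{F'(t)}^{F(t)}(t-F^{-1}(y))\,dy$, which exceeds $\rho$ if $F'(t)<\supscr{\F}{low}_\rho[F](t)$.

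The main obstacle is getting the quantile-transport inequality exactly right at the level of generalized inverses: $F$ and $F'$ need not be strictly increasing or continuous, so the equivalences "$F'(t)\ge y\iff F'^{-1}(y)\le t$" must be handled through the precise definition $F^{-1}(y)=\inf\{s\mid F(s)>y\}$ given in the preliminaries, paying attention to which inequalities are strict. Once those monotone-rearrangement bookkeeping points are nailed down, the comparison $W_1(F,F')=\int_0^1|F^{-1}-F'^{-1}|\ge\int_{\text{relevant }y\text{-interval}}(\text{signed gap})$ is just dropping the rest of a nonnegative integrand, and the rest is the elementary monotonicity/continuity analysis of the single-variable integrals sketched above; the figure makes the geometry transparent but the write-up should phrase everything analytically via the integration-by-parts identity $\int_{F(t)}^z(F^{-1}(y)-t)\,dy=\int_t^{F^{-1}(z)}(z-F(s))\,ds$ so that "area between the graphs" statements become rigorous.
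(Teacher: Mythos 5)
Your proposal is correct and follows the same overall architecture as the paper's proof (well-definedness of the envelope via strict monotonicity and continuity of $z\mapsto\int_{F(t)}^z(F^{-1}(y)-t)\,dy$, then the sandwich bound \eqref{CDF:sandwich} by contradiction), but the key estimate in the sandwich step is carried out on the dual side. The paper works with the horizontal representation $W_1(F,F')=\int_a^b|F'(t)-F(t)|\,dt$ from \eqref{eq:W1def}: assuming $F'(t^*)>\supscr{\F}{up}_\rho[F](t^*)$, it shows via (GI1) and (GI3) that $F'\ge F$ on the nonempty interval $[t^*,F^{-1}(F'(t^*)))$, lower-bounds $F'(t)$ there by the constant $F'(t^*)$, and then converts $\int_{t^*}^{F^{-1}(F'(t^*))}(F'(t^*)-F(t))\,dt$ into $\int_{F(t^*)}^{F'(t^*)}(F^{-1}(y)-t^*)\,dy$ by the area identity. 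You instead work with the quantile representation $\int_0^1|F^{-1}(y)-(F')^{-1}(y)|\,dy$ and bound the integrand pointwise for $y\in(F(t),F'(t))$ using $(F')^{-1}(y)\le t\le F^{-1}(y)$; this reaches the same integral $\int_{F(t)}^{F'(t)}(F^{-1}(y)-t)\,dy>\rho$ in one step and trades the paper's area-conversion identity for the generalized-inverse bookkeeping you correctly flag as the delicate point. Both are sound; your route is slightly shorter, the paper's stays closer to the $W_1$ representation it uses everywhere else. One caveat: the part of the argument where the paper invests the most effort is the continuity of $t\mapsto\supscr{\F}{up}_\rho[F](t)$ (a sequential argument splitting the defining integral into three terms and invoking (GI2), plus continuity of the inverse of a strictly increasing continuous map), and your write-up disposes of this with ``follows from continuity and monotone dependence of the defining integral on $t$''; the claim is true and your plan would work, but as written this is an assertion rather than a proof, and a complete version would need to supply that argument.
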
 

We rely on \cref{lemma:CDF:envelope} to obtain in the next result ambiguity bands for the inputs that share the confidence guarantees with the ambiguity sets of \cref{prop:init:bnd:ambiguity:sets}.

\begin{corollary}[Input ambiguity bands] \label{corollary:CDF:ambig:sets}
Assume $N$ pairs of input samples are collected according to \cref{assumption:sampling} and let $\rho_{\mathbf a}$ and $\bar{\mathbf a}$ as in the statement of \cref{prop:init:bnd:ambiguity:sets}. Given a confidence level $1-\beta$, define the ambiguity sets 
\begin{subequations} 
\begin{align*}
\P_{\mathbf x}^{0,\Env} & := \big\{F\in \CD(\Rat{})\,|\, 
\supscr{\F}{low}_{\rho_0(\mathbf x),[\alpha_0(\mathbf x),\gamma_0(\mathbf 
x)]}[\widehat F_{u_0(\mathbf x)}^N](U)\le F(U) \\
& \hspace{13em} \le \supscr{\F}{up}_{\rho_0(\mathbf x),[\alpha_0(\mathbf 
x),\gamma_0(\mathbf 
x)]}[\widehat F_{u_0(\mathbf x)}^N](U)\;\forall U\in\Rat{}\big\}, \\
\P_{\mathbf x,t}^{b,\Env} & := \big\{F\in \CD(\Rat{})\,|\, 
\supscr{\F}{low}_{\rho_b(\mathbf x,t),[\alpha_b(\mathbf x,t),\gamma_b(\mathbf 
x,t)]}[\widehat F_{u_b(\mathbf x,t)}^N](U)\le F(U) \\
& \hspace{13em} \le \supscr{\F}{up}_{\rho_b(\mathbf x,t),[\alpha_b(\mathbf 
x,t),\gamma_b(\mathbf x,t)]}[\widehat F_{u_b(\mathbf x,t)}^N](U)\;\forall U\in\Rat{}\big\},   
\end{align*}
\end{subequations}
for $\mathbf x\in\Omega$ and $(\mathbf x,t)\in\Gamma\times\RgeO$, respectively, where 
\begin{subequations}
\begin{align}
\rho_0(\mathbf x) & :=L_0(\mathbf x)\epsilon_N(\beta,\rho_{\mathbf a}) \label{rho:zero} \\
\rho_b(\mathbf x,t) & :=  L_b(\mathbf x,t)\epsilon_N(\beta,\rho_{\mathbf a}), \label{rho:beta}
\end{align}
\end{subequations}
and $[\alpha_0(\mathbf x),\gamma_0(\mathbf x)]$, $[\alpha_b(\mathbf x,t),\gamma_b(\mathbf x,t)]$, $\epsilon_N(\beta,\rho_{\mathbf a})$ given by \eqref{interv:alpha:gamma:0}, \eqref{interv:alpha:gamma:b}, and \eqref{epsN:dfn}.~Then 
\begin{align} \label{guarantee:init:bnd:env} 
\bP(\{\supscr{F}{true}_{u_0(\mathbf x)}\in \P_{\mathbf x}^{0,\Env}\;\forall 
\mathbf x\in\Omega\}\cap\{\supscr{F}{true}_{u_b(\mathbf x,t)}\in \P_{\mathbf 
x,t}^{b,\Env}\;\forall (\mathbf x,t)\in\Gamma\times\RgeO\}) \ge 1-\beta. 
\end{align}
\end{corollary}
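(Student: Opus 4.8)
The plan is to obtain \cref{corollary:CDF:ambig:sets} as a deterministic, pointwise consequence of \cref{prop:init:bnd:ambiguity:sets} and \cref{lemma:CDF:envelope}, so that the confidence estimate is inherited without repeating any probabilistic argument. First I would isolate the event $E:=\{W_1(\widehat P_{\mathbf a}^N,\supscr{P}{true}_{\mathbf a})\le\epsilon_N(\beta,\rho_{\mathbf a})\}$, which satisfies $\bP(E)\ge 1-\beta$ by \cref{lemma:ambiguity:radius} with $p=1$, exactly as in \eqref{guarantee:parameters}. The proof of \cref{prop:init:bnd:ambiguity:sets} already shows that on $E$ one has the pointwise bounds $W_1(\widehat F_{u_0(\mathbf x)}^N,\supscr{F}{true}_{u_0(\mathbf x)})\le L_0(\mathbf x)\epsilon_N(\beta,\rho_{\mathbf a})=\rho_0(\mathbf x)$ for all $\mathbf x\in\Omega$ and $W_1(\widehat F_{u_b(\mathbf x,t)}^N,\supscr{F}{true}_{u_b(\mathbf x,t)})\le\rho_b(\mathbf x,t)$ for all $(\mathbf x,t)\in\Gamma\times\RgeO$, and that the true CDFs lie in $\CD([\alpha_0(\mathbf x),\gamma_0(\mathbf x)])$, resp.\ $\CD([\alpha_b(\mathbf x,t),\gamma_b(\mathbf x,t)])$. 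Using once more that $\mathbf a^1,\ldots,\mathbf a^N\in{\rm supp}(\supscr{P}{true}_{\mathbf a})$ almost surely, together with \eqref{Lip:constant:init}--\eqref{Lip:constant:bnd} and the choice of $\bar{\mathbf a}$, I would also record that the \emph{empirical} CDFs themselves satisfy $\widehat F_{u_0(\mathbf x)}^N\in\CD([\alpha_0(\mathbf x),\gamma_0(\mathbf x)])$ and $\widehat F_{u_b(\mathbf x,t)}^N\in\CD([\alpha_b(\mathbf x,t),\gamma_b(\mathbf x,t)])$, which is what makes the envelope operators in the statement well defined.

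With these facts in hand, the core step is a pointwise invocation of \cref{lemma:CDF:envelope}. Fix $\mathbf x\in\Omega$ and work on $E$: take $F=\widehat F_{u_0(\mathbf x)}^N$, interval $[\alpha_0(\mathbf x),\gamma_0(\mathbf x)]$, radius $\rho=\rho_0(\mathbf x)$, and comparison CDF $F'=\supscr{F}{true}_{u_0(\mathbf x)}$; the hypothesis $W_1(F,F')\le\rho$ holds by the bound above. Then \eqref{CDF:sandwich} gives $\supscr{\F}{low}_{\rho_0(\mathbf x),[\alpha_0(\mathbf x),\gamma_0(\mathbf x)]}[\widehat F_{u_0(\mathbf x)}^N](U)\le\supscr{F}{true}_{u_0(\mathbf x)}(U)\le\supscr{\F}{up}_{\rho_0(\mathbf x),[\alpha_0(\mathbf x),\gamma_0(\mathbf x)]}[\widehat F_{u_0(\mathbf x)}^N](U)$ for every $U\in\Rat{}$, which is precisely the membership $\supscr{F}{true}_{u_0(\mathbf x)}\in\P_{\mathbf x}^{0,\Env}$. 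The same argument with $L_b$, $\rho_b$, $[\alpha_b,\gamma_b]$ yields $\supscr{F}{true}_{u_b(\mathbf x,t)}\in\P_{\mathbf x,t}^{b,\Env}$ for all $(\mathbf x,t)\in\Gamma\times\RgeO$. Since all of this is a deterministic implication of $E$, it holds simultaneously over the whole space--time boundary, so $E$ is contained in the event whose probability \eqref{guarantee:init:bnd:env} lower-bounds; monotonicity of $\bP$ and $\bP(E)\ge 1-\beta$ then give the claim.

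The one point that I expect to require care is the admissibility of the radius $\rho_0(\mathbf x)$ (resp.\ $\rho_b(\mathbf x,t)$) in \cref{lemma:CDF:envelope}, which is stated under $\rho\le\min\{\int_{\alpha_0(\mathbf x)}^{\gamma_0(\mathbf x)}\widehat F_{u_0(\mathbf x)}^N(t)\,dt,\int_{\alpha_0(\mathbf x)}^{\gamma_0(\mathbf x)}(1-\widehat F_{u_0(\mathbf x)}^N(t))\,dt\}$: this may fail at some $\mathbf x$ (for instance when $N$ is small, or when all samples at $\mathbf x$ cluster at an endpoint of the interval). In that regime I would interpret the suprema/infima in the definitions of $\supscr{t}{up}_\rho$ and $\supscr{t}{low}_\rho$ over an empty index set as $\alpha_0(\mathbf x)$ and $\gamma_0(\mathbf x)$ respectively, so that the offending envelope collapses to the CDF of the Dirac mass at the corresponding endpoint; the sandwich \eqref{CDF:sandwich} then holds trivially because any $F'\in\CD([\alpha_0(\mathbf x),\gamma_0(\mathbf x)])$---in particular $\supscr{F}{true}_{u_0(\mathbf x)}$---vanishes on $(-\infty,\alpha_0(\mathbf x))$ and equals $1$ on $[\gamma_0(\mathbf x),\infty)$. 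The degenerate case $\rho_0(\mathbf x)=0$, which occurs only where $L_0(\mathbf x)=0$, is handled by taking both envelopes equal to the common CDF $\widehat F_{u_0(\mathbf x)}^N=\supscr{F}{true}_{u_0(\mathbf x)}$. Apart from this case distinction, no new estimate is needed beyond \cref{prop:init:bnd:ambiguity:sets} and \cref{lemma:CDF:envelope}.
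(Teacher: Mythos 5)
Your proof is correct and follows essentially the same route as the paper's: both reduce the claim to the guarantee of \cref{prop:init:bnd:ambiguity:sets} together with the sandwich \eqref{CDF:sandwich} of \cref{lemma:CDF:envelope}, the only cosmetic difference being that the paper phrases the key step as the set inclusions $\P_{\mathbf x}^0\subset\P_{\mathbf x}^{0,\Env}$ and $\P_{\mathbf x,t}^b\subset\P_{\mathbf x,t}^{b,\Env}$ rather than as a pointwise statement about the true CDF on the high-confidence event. Your additional care about the admissibility of the radius $\rho_0(\mathbf x)$ in \cref{lemma:CDF:envelope} and about $\widehat F_{u_0(\mathbf x)}^N$ lying in $\CD([\alpha_0(\mathbf x),\gamma_0(\mathbf x)])$ addresses edge cases that the paper leaves implicit, but does not alter the argument.
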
 
\begin{proof}
By \eqref{guarantee:init:bnd} and \eqref{guarantee:init:bnd:env}, it suffices to show that $\P_{\mathbf x}^0\subset \P_{\mathbf x}^{0,\Env}$ and $\P_{\mathbf x,t}^b\subset \P_{\mathbf x,t}^{b,\Env}$ for all $\mathbf x\in\Omega$ and $(\mathbf x,t)\in\Omega\times\RgeO$, respectively, with  $\P_{\mathbf x}^0$ and  $\P_{\mathbf x,t}^b$ given in \cref{prop:init:bnd:ambiguity:sets}. Let $\mathbf x\in\Omega$ and $F\in\P_{\mathbf x}^0$. Then, we get from the definition of $\P_{\mathbf x}^0$ and  \eqref{rho:zero} that $F\in\CD([\alpha_0(\mathbf x),\gamma_0(\mathbf x)])$ and 
$W_1(\widehat F_{u_0}^N,F)\le L_0(\mathbf x)\epsilon_N(\beta,\rho_{\mathbf a})=\rho_0(\mathbf x)$.
Thus, since $F\in\CD([\alpha_0(\mathbf x),\gamma_0(\mathbf x)])$, we can invoke Lemma~\ref{lemma:CDF:envelope} and deduce from \eqref{CDF:sandwich} that  $F\in\P_{\mathbf x}^{0,\Env}$. Analogously, $\P_{\mathbf x,t}^b\subset \P_{\mathbf x,t}^{b,\Env}$ for all $(\mathbf x,t)\in\Omega\times\RgeO$. 
\end{proof}

\begin{rem}[Confidence bands for components of non-scalar random variables]
{\rm
Confidence bands for \textit{scalar} random variables are well-studied in the statistics literature \cite{ABO:95}. Their construction has been originally based on the Kolmogorov-Smirnov test \cite{ANK:33}, \cite{NVS:44}, for which rigorous confidence guarantees have been introduced in \cite{dvoretzky1956asymptotic} and further refined in \cite{massart1990tight}. A key difference of our approach is that we obtain analogous guarantees for an infinite (in fact uncountable) number of random variables, indexed by all spatio-temporal locations. This is achievable by using the Wasserstein ball guarantees in the finite-dimensional but in general \textit{non-scalar} parameter space. Therefore, resorting to traditional confidence band guarantees \cite{massart1990tight} is possible only in the restrictive case where we consider a single random parameter for the inputs. }
\oprocend
\end{rem}

We next present  explicit constructions for the upper and lower CDF envelopes of the empirical CDF.  For $n,m\in\bN$ and $t\in\Rat{}$, we use the conventions $[n:m]=\emptyset$ when $m<n$ and $[t,t)=\emptyset$. The proof of the following result is given in \cref{sec:app:to:sec:init:guarantees}.
 
\begin{proposition}[Upper CDF envelope for discrete distributions]\label{prop:envelopeconstruction}
Let $\widehat F\in\CD([a,b])$ be the CDF of a discrete distribution with positive mass $c_i$ at a finite number of points $t_i$, $i\in [1:N]$ satisfying $a=:t_0\le t_1<\dots<t_N\le b$ and define 
$
b_{i,j}:=\sum_{k=j}^i(t_k-t_j)c_k$, for $0\le j\le i\le N $, (with $b_{i,j}=0$ for any other $i,j\in\mathbb N_0$). Given $\rho>0$ with $b_{N,0}=\sum_{i=1}^N(t_i-a)c_i>\rho$, let $j_1:=0$, 
$i_1:=\min\{i\in[1:N]\,|\,b_{i,0}\ge\rho\}$ and
\begin{align*}
j_{k+1} & :=\max\{j\in[j_k:i_k]\,|\,b_{i_k,j}\ge\rho\}+1,\quad 
k=1,\ldots,k_{\max} \\
i_{k+1} & :=\min\{i\in[i_k+1:N]\,|\,b_{i,j_{k+1}}\ge\rho\},\quad 
k=1,\ldots,k_{\max}-1,  
\end{align*}	  
where $k_{\max}:=\min\{k\in\mathbb N\,|\,b_{N,j_{k+1}}\le\rho\}$. Then, all indices $j_k,i_k$ are well defined and   
\begin{align} \label{index:ordering}
j_k<j_{k+1}\le i_k<i_{k+1}\quad\forall k\in[1:k_{\max}], 
\end{align}
where $i_{k_{\max}+1}:= N+1$. Also, for each $k\in[1:k_{\max}]$, let 
\begin{align*}
\Delta t_\ell & :=\frac{\rho-b_{\ell,j_{k+1}}}{\sum_{l=j_{k+1}}^\ell c_l},\quad
\tau_\ell:=t_{j_{k+1}}-\Delta t_\ell,\quad \ell\in[i_k:i_{k+1}-1] \\  
\Delta y_\ell & :=\frac{\rho-b_{i_k-1,\ell}}{t_{i_k}-t_\ell}, \quad 
y_\ell:=\sum_{l=1}^{i_k-1}c_l+\Delta y_\ell, \quad \ell\in[j_k:j_{k+1}-1].
\end{align*} 
Then, $\tau_\ell$ are defined for all $\ell\in[i_1:N]$ and form a strictly increasing sequence with
\begin{align}
t_0 & =t_{j_1}\le\cdots\le t_{j_2-1}\le \tau_{i_1}\le\cdots\le\tau_{i_2-1}<t_{j_2}\le \cdots \label{all:times:ordering} \\
& \le t_{j_k}\le\cdots\le t_{j_{k+1}-1}\le \tau_{i_k}\le\cdots\le\tau_{i_{k+1}-1}<t_{j_{k+1}}\le \cdots \notag \\
& \le t_{j_{k_{\max}}}\le\cdots\le t_{j_{k_{\max}+1}-1}\le \tau_{i_{k_{\max}}}\le\cdots \notag \\
& \le\tau_{i_{k_{\max}+1}-1}=\tau_N<t_{j_{k_{\max}+1}}\le t_{i_{k_{\max}}}<t_N. \notag 
\end{align}
Further, the upper CDF envelope $\supscr{\widehat F}{up}\equiv\supscr{\mathcal F}{up}_\rho[\widehat F]$ of $\widehat F$ is given as 
\begin{align*}
& \supscr{\widehat F}{up}(t)=  \\
& \begin{cases}
0 & {\rm if}\; t\in(-\infty,a),  \\
z_\ell+(y_\ell-z_\ell)\frac{t_{i_k}-t_\ell}{t_{i_k}-t} & {\rm if}\; 
t\in[t_\ell,t_{\ell+1}),\ell\in[j_k:j_{k+1}-2], k\in[1:k_{\max}], \\
& {\rm if}\; t\in[t_{j_{k+1}-1},\tau_{i_k}),\ell=j_{k+1}-1,k\in[1:k_{\max}],\\
z_{j_{k+1}-1}+(z_\ell-z_{j_{k+1}-1})\frac{t_{\ell+1}-\tau_\ell}{t_{\ell+1}-t} &
{\rm if}\; t\in[\tau_\ell,\tau_{\ell+1}),\ell\in[i_k:i_{k+1}-2],k\in[1:k_{\max}], \\
&  {\rm if}\; t\in[\tau_{i_{k+1}-1},t_{j_{k+1}}),\ell=i_{k+1}-1,k\in[1:k_{\max}], \\
1 & {\rm if}\; t\in [\tau_N,\infty),
\end{cases}
\end{align*}
where $z_\ell:=\sum_{l=0}^\ell c_l$, $\ell\in[0:N]$ and  $c_0:=0$.
\end{proposition}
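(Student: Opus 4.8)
The plan is to settle the statement in three stages: (a) the combinatorial claims on the indices $i_k,j_k$ and the orderings \eqref{index:ordering}--\eqref{all:times:ordering}; (b) a reduction of the variational definition of $\supscr{\F}{up}_\rho[\widehat F]$ from Lemma~\ref{lemma:CDF:envelope} to a single scalar equation per point $t$; and (c) a piecewise check that the displayed formula, which we denote $\widetilde F$, solves that equation. For (a) I would first record the monotonicities ($j\mapsto b_{i,j}$ nonincreasing, $i\mapsto b_{i,j}$ nondecreasing, $b_{i,i}=0$) and the recursions $b_{\ell+1,j}=b_{\ell,j}+(t_{\ell+1}-t_j)c_{\ell+1}$ and $b_{i,j-1}=b_{i,j}+(t_j-t_{j-1})\sum_{l=j}^{i}c_l$. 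An induction on $k$ with hypothesis ``$b_{i_k,j_k}\ge\rho$'' (base case from $b_{i_1,0}\ge\rho$ and $b_{N,0}>\rho$) then yields everything: the set defining $j_{k+1}$ is nonempty, so $j_k<j_{k+1}$, and $b_{i_k,i_k}=0<\rho$ forces $j_{k+1}\le i_k$, whence $b_{i_k,j_{k+1}-1}\ge\rho>b_{i_k,j_{k+1}}$; for $k\le k_{\max}-1$ the definition of $k_{\max}$ gives $b_{N,j_{k+1}}>\rho$, so the set defining $i_{k+1}$ is nonempty, $i_k<i_{k+1}\le N$ and $b_{i_{k+1},j_{k+1}}\ge\rho$, closing the loop; $k_{\max}<\infty$ since $(j_k)$ strictly increases in $[0:N]$, and the same step produces $j_{k_{\max}+1}\le i_{k_{\max}}$ so that \eqref{index:ordering} holds at $k=k_{\max}$ with $i_{k_{\max}+1}:=N+1$. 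For \eqref{all:times:ordering}: all denominators of $\Delta t_\ell,\Delta y_\ell$ are positive ($c_l>0$, $j_{k+1}\ge1$); from $b_{\ell,j_{k+1}}<\rho\le b_{\ell,j_{k+1}-1}$ for $\ell\in[i_k:i_{k+1}-1]$ (first by the extremal definition of $i_{k+1}$, second by the recursion for $b_{i,j-1}$) one gets $0<\Delta t_\ell\le t_{j_{k+1}}-t_{j_{k+1}-1}$, i.e.\ $t_{j_{k+1}-1}\le\tau_\ell<t_{j_{k+1}}$; strict monotonicity $\tau_\ell<\tau_{\ell+1}$ inside a block is a one-line computation from $\Delta t_{\ell+1}=(\rho-b_{\ell+1,j_{k+1}})/\sum_{l=j_{k+1}}^{\ell+1}c_l$; and $b_{i_k-1,\ell}<\rho\le b_{i_k,\ell}$ for $\ell\in[j_k:j_{k+1}-1]$ gives $z_{i_k-1}<y_\ell\le z_{i_k}$. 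Chaining these over consecutive blocks produces \eqref{all:times:ordering}.

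For (b), fix $t$ and write $\widehat F(t)=z_{\ell_0}$, $t\in[t_{\ell_0},t_{\ell_0+1})$. Since $\widehat F^{-1}(y)=t_\ell$ on $[z_{\ell-1},z_\ell)$, the map $G_t(z):=\int_{\widehat F(t)}^{z}(\widehat F^{-1}(y)-t)\,dy$ on $[\widehat F(t),1]$ is continuous and piecewise linear with slope $t_m-t>0$ on $(z_{m-1},z_m)$ for $m\ge\ell_0+1$, hence strictly increasing and convex, with $G_t(\widehat F(t))=0$; moreover $G_t(1)=\int_t^{b}(1-\widehat F(s))\,ds$ because $t_N\le b$. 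Thus $\supscr{\F}{up}_\rho[\widehat F](t)$ is $0$ for $t<a$, is $1$ for $t\ge\supscr{t}{up}_\rho[\widehat F]$, and for $t<\supscr{t}{up}_\rho[\widehat F]$ is the unique $z^*\in(\widehat F(t),1)$ with $G_t(z^*)=\rho$. I would then show $\supscr{t}{up}_\rho[\widehat F]=\tau_N$: with $J:=j_{k_{\max}+1}$ (so $\tau_N\in[t_{J-1},t_J)$), the telescoping identity $\sum_{m=J}^{N-1}(t_{m+1}-t_m)(1-z_m)=b_{N,J}$ together with $\sum_{l=J}^{N}c_l=1-z_{J-1}$ gives $\int_{\tau_N}^{b}(1-\widehat F)=(t_J-\tau_N)(1-z_{J-1})+b_{N,J}=(\rho-b_{N,J})+b_{N,J}=\rho$, and since $1-\widehat F>0$ on $[\tau_N,t_N)$ this pins the supremum and shows $\tau_N<t_N$. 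This settles the ranges $t<a$ and $t\ge\tau_N$.

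For (c), it remains to prove, for each $t\in[a,\tau_N)$, that $\widehat F(t)\le\widetilde F(t)<1$ and $G_t(\widetilde F(t))=\rho$, partitioning $[a,\tau_N)$ by \eqref{all:times:ordering}. On a ``first-type'' interval ($\ell\in[j_k:j_{k+1}-2]$, $t\in[t_\ell,t_{\ell+1})$, or $\ell=j_{k+1}-1$, $t\in[t_{j_{k+1}-1},\tau_{i_k})$) one has $\widehat F(t)=z_\ell$ and, on $z\in[z_{i_k-1},z_{i_k}]$, $G_t(z)=b_{i_k-1,\ell}+(t_\ell-t)(z_{i_k-1}-z_\ell)+(t_{i_k}-t)(z-z_{i_k-1})$; solving $G_t(z)=\rho$ and substituting $\rho-b_{i_k-1,\ell}=(y_\ell-z_{i_k-1})(t_{i_k}-t_\ell)$ gives exactly $z^*=z_\ell+(y_\ell-z_\ell)\tfrac{t_{i_k}-t_\ell}{t_{i_k}-t}$, and $z^*\in[z_{i_k-1},z_{i_k}]$ because $z^*$ increases in $t$, equals $y_\ell$ at the left endpoint and $z_{i_k}$ at $t=\tau_{i_k}$. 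On a ``second-type'' interval one argues symmetrically with $\widehat F(t)=z_{j_{k+1}-1}$, $G_t(z)=b_{\ell,j_{k+1}}+(t_{j_{k+1}}-t)(z_\ell-z_{j_{k+1}-1})+(t_{\ell+1}-t)(z-z_\ell)$ on $z\in[z_\ell,z_{\ell+1}]$, and $\rho-b_{\ell,j_{k+1}}=(t_{j_{k+1}}-\tau_\ell)(z_\ell-z_{j_{k+1}-1})$, obtaining $z^*=z_{j_{k+1}-1}+(z_\ell-z_{j_{k+1}-1})\tfrac{t_{\ell+1}-\tau_\ell}{t_{\ell+1}-t}$. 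Continuity of $\widetilde F$ at the breakpoints is then automatic: across an atom $t_{\ell+1}$ of $\widehat F$ the two one-sided expressions agree because $\widehat F^{-1}\equiv t_{\ell+1}$ on $[z_\ell,z_{\ell+1})$, so $G_t(\cdot)$ is unchanged on $[z_{\ell+1},1]$ when the base point $\widehat F(t)$ jumps from $z_\ell$ to $z_{\ell+1}$; across the points $\tau_\ell$ (and at $\tau_{i_k}$) the area identities above give $G_{\tau_\ell}(z_\ell)=\rho$ and $G_{\tau_{i_k}}(z_{i_k})=\rho$, which forces the one-sided limits to coincide. Reading $\widetilde F\ge\widehat F$ and $\widetilde F<1$ off the explicit formula then completes the identification $\widetilde F=\supscr{\F}{up}_\rho[\widehat F]$ and, in particular, shows $\widetilde F\in\CD([a,b])$.

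The hard part is stage (c): there are six nominal sub-cases, each rational expression is obtained by solving a \emph{linear} equation valid only while $z$ stays in a single mass interval $[z_{m-1},z_m]$, and one must confirm it never leaves that interval over its whole $t$-range --- this is exactly what makes the pieces glue into a genuine nondecreasing CDF rather than a merely formal expression. The two structural facts that keep the bookkeeping tractable are the convexity and strict monotonicity of $z\mapsto G_t(z)$ (so one scalar equation determines the envelope at each $t$) and the flatness of $\widehat F^{-1}$ on each $[z_{\ell-1},z_\ell)$ (so nothing happens at the atoms of $\widehat F$).
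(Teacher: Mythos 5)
Your proposal is correct, and its skeleton matches the paper's proof: the same induction on $k$ with hypothesis $b_{i_k,j_k}\ge\rho$ gives well-definedness of the indices and \eqref{index:ordering}; the same three inequalities $b_{i_k,j_{k+1}}<\rho$, $b_{i_{k+1}-1,j_{k+1}}<\rho$, $b_{i_k,j_{k+1}-1}\ge\rho$ drive \eqref{all:times:ordering}; and the computation $\int_{\tau_N}^b(1-\widehat F(s))\,ds=\rho$ identifies $\tau_N$ with $\supscr{t}{up}_\rho[\widehat F]$ exactly as in the paper's Step 3. Where you genuinely diverge is the verification of the formula on $[a,\tau_N)$. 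The paper first localizes the envelope value, proving (its ``Fact I'', via the integral bounds $\int_{\widehat F(t)}^{z_{i_k}}(\widehat F^{-1}(y)-t)\,dy\ge\rho>\int_{\widehat F(t)}^{z_{i_k-1}}(\widehat F^{-1}(y)-t)\,dy$ together with property (GI4)) that $\widehat F^{-1}(\supscr{\widehat F}{up}(t))$ equals a fixed atom $\subscr{t}{rt}$ on each subinterval, and then extracts the rational expression by a limiting argument that compares the defining identity at $t$ with its limit at the subinterval's left endpoint, leaning on the continuity of the envelope already established in \cref{lemma:CDF:envelope}. You instead make $G_t(z)=\int_{\widehat F(t)}^z(\widehat F^{-1}(y)-t)\,dy$ explicit as a piecewise-linear, strictly increasing function of $z$, solve the linear equation $G_t(z)=\rho$ on the candidate mass interval, and confirm a posteriori that the root stays there. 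The content is equivalent --- your interval-confinement check is precisely Fact I, and your endpoint identities $G_{\tau_\ell}(z_\ell)=\rho$ and $G_{\tau_{i_k}}(z_{i_k})=\rho$ are the paper's area computations --- but your derivation of the closed form is purely algebraic and self-contained, at the cost of having to verify continuity of the glued formula at the breakpoints yourself, which you do correctly via the flatness of $\widehat F^{-1}$ across atoms of $\widehat F$. Both routes close the proof; yours is arguably the more elementary execution of the same plan.
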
 

\begin{figure}[tbh]
	\centering
	\includegraphics[width=.8\textwidth]{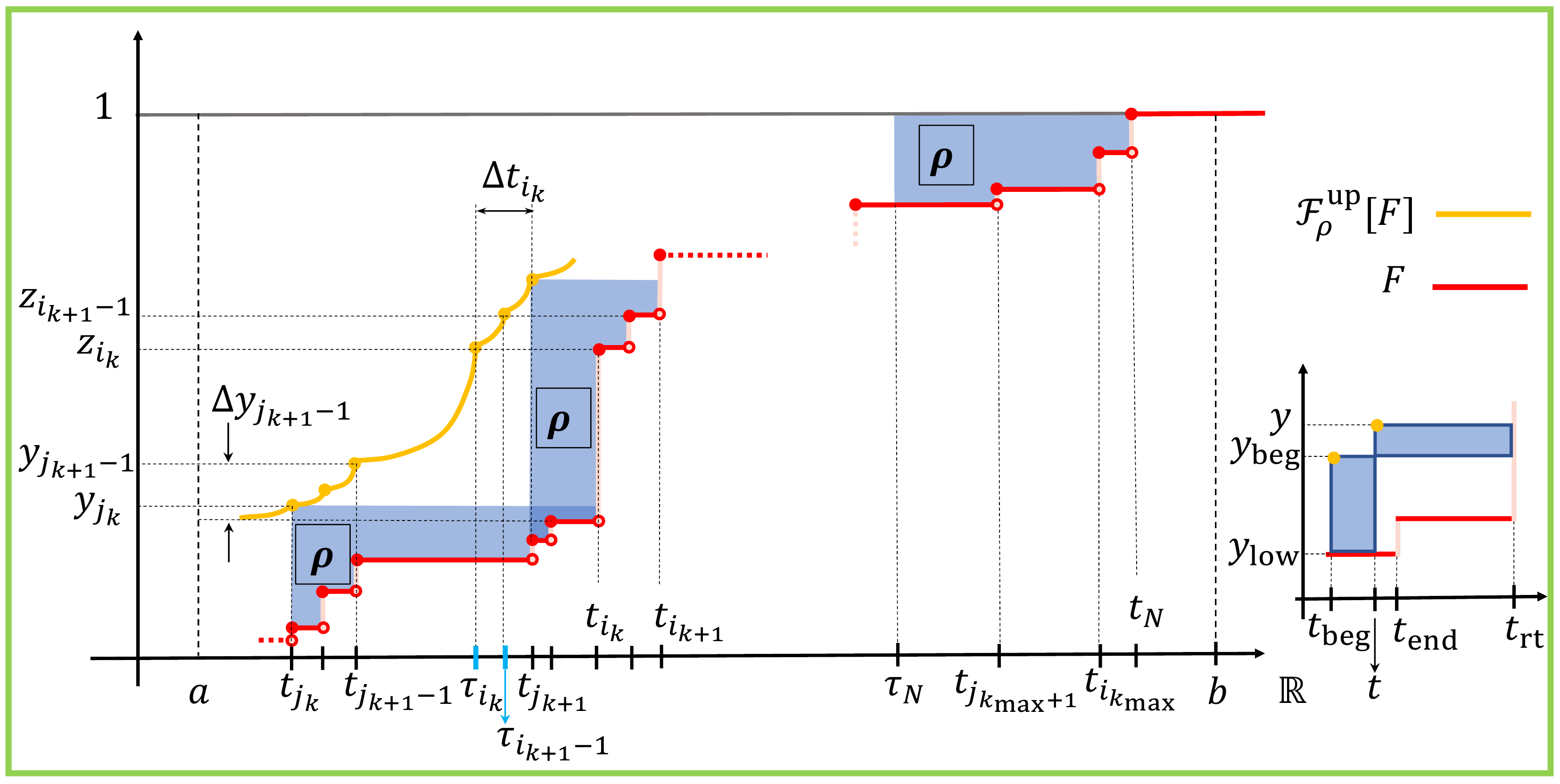}
	\caption{Illustration of how the upper CDF envelope $\supscr{\F}{up}_\rho[F]$ (in yellow) is constructed for a discrete distribution with a finite number of atoms.}\label{fig:envelope:empirical}
	\vspace*{-2ex}
\end{figure} 
 
\cref{prop:envelopeconstruction} is illustrated in \cref{fig:envelope:empirical}. 
To construct lower CDF envelopes, we introduce the reflection $\supscr{\F}{refl}_{(\frac{a+b}{2},\frac{1}{2})}[F]$ of a function $F$ around the point  $(\frac{a+b}{2},\frac{1}{2})$, i.e., 
$\supscr{\F}{refl}_{(\frac{a+b}{2},\frac{1}{2})}[F](t):=1-F(a+b-t)$, $t\in\Rat{}$. We also define the right-continuous version ${\rm rc}[G]$ of an increasing function $G$ by ${\rm rc}[G](t):=\lim_{s\searrow t}G(s)$, that satisfies $\int_a^tG(s)ds=\int_a^t{\rm rc}[G](s)ds$. Combining this with the fact that $G^{-1}\equiv ({\rm rc}[G])^{-1}$ when $G$ is increasing, we deduce from \cref{lemma:CDF:envelope} that the upper and lower CDF envelopes of a CDF $F$ are well defined and, in fact, are the 
same with those of any increasing function $G$ agreeing with $F$ everywhere except from its points of discontinuity, i.e., with ${\rm rc}[G]=F$.  
The next result explicitly constructs lower CDF envelopes by reflecting the upper CDF envelopes of reflected CDFs. Its proof is given in  \cref{sec:app:to:sec:init:guarantees}. 

\begin{lemma} [Lower CDF envelope via reflection]
\label{lemma:reflected:CDF:envelope}
Let $F\in\CD([a,b])$ and $\rho>0$ with $\rho\le\int_a^bF(t)dt$. Then, the lower CDF envelope of $F$  satisfies  
\begin{align*}
\supscr{\F}{low}_\rho[F]=\supscr{\F}{refl}_{(\frac{a+b}{2},\frac{1}{2})}\big[\supscr{\F}{up}_\rho\big[\supscr{\F}{refl}_{(\frac{a+b}{2},\frac{1}{2})}\big[F\big]\big]\big].
\end{align*}
\end{lemma}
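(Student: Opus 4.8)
The plan is to reduce the claim for the lower envelope entirely to the already-established construction of the upper envelope in \cref{lemma:CDF:envelope}, by showing that the reflection map around $(\frac{a+b}{2},\frac12)$ is an isometry (in the appropriate sense) that interchanges upper and lower envelopes. Concretely, write $R:=\supscr{\F}{refl}_{(\frac{a+b}{2},\frac12)}$, so $R[F](t)=1-F(a+b-t)$. First I would verify the elementary algebraic facts about $R$: it is an involution ($R[R[F]]=F$) on $\CD([a,b])$, it maps $\CD([a,b])$ into itself (if $F$ has support in $[a,b]$ then so does $R[F]$, since the substitution $s\mapsto a+b-s$ maps $[a,b]$ onto itself), it preserves continuity, and — crucially — it preserves the $1$-Wasserstein distance: $W_1(R[F],R[G])=\int_\Rat{}|R[F](s)-R[G](s)|\,ds=\int_\Rat{}|F(a+b-s)-G(a+b-s)|\,ds=W_1(F,G)$ by the change of variables $t=a+b-s$, using the representation \eqref{eq:W1def}. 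I would also record how $R$ acts on the two integral functionals appearing in \cref{lemma:CDF:envelope}: $\int_a^\tau R[F](t)\,dt=\int_a^\tau(1-F(a+b-t))\,dt=\int_{a+b-\tau}^b(1-F(s))\,ds$, and symmetrically $\int_\tau^b(1-R[F](t))\,dt=\int_a^{a+b-\tau}F(s)\,ds$; these identities already show $\supscr{t}{up}_\rho[R[F]]=a+b-\supscr{t}{low}_\rho[F]$ and $\supscr{t}{low}_\rho[R[F]]=a+b-\supscr{t}{up}_\rho[F]$, so the hypothesis $\rho\le\int_a^bF(t)\,dt$ guarantees $\rho\le\int_a^b(1-R[F](t))\,dt$ and hence that $\supscr{\F}{up}_\rho[R[F]]$ is well defined.

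The core of the argument is the identity $R\big[\supscr{\F}{up}_\rho[R[F]]\big]=\supscr{\F}{low}_\rho[F]$, equivalently (applying $R$ to both sides and using involutivity) $\supscr{\F}{up}_\rho[R[F]]=R\big[\supscr{\F}{low}_\rho[F]\big]$. One route is a direct computation from the explicit formulas in \cref{lemma:CDF:envelope}: for $t$ in the relevant middle interval, $R\big[\supscr{\F}{low}_\rho[F]\big](t)=1-\supscr{\F}{low}_\rho[F](a+b-t)=1-\inf\{z\in[0,F(a+b-t)]\mid\int_z^{F(a+b-t)}((a+b-t)-F^{-1}(y))\,dy\le\rho\}$; substituting $s=a+b-t$ and then performing the change of variables $w=1-y$ (so that $F^{-1}(1-w)$ relates to $(R[F])^{-1}(w)$ via $(R[F])^{-1}(w)=a+b-F^{-1}(1-w)$, which I would verify from the definition of the generalized inverse) turns the inner integral into $\int_{1-F(s)}^{z'}\big((R[F])^{-1}(w)-t\big)\,dw$ and the infimum over $z$ into a supremum over $z'=1-z$, matching exactly the definition of $\supscr{\F}{up}_\rho[R[F]](t)$. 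The cleaner alternative — which I would prefer to present — is characterization-based rather than formula-based: from \cref{lemma:CDF:envelope}, $\supscr{\F}{up}_\rho[G]$ is the pointwise-largest element of $\CD([a,b])$ lying in the closed $W_1$-ball of radius $\rho$ around $G$ (largeness follows because the construction is tight, as the blue-area description in \cref{fig:CDF:envelope} makes clear), and likewise $\supscr{\F}{low}_\rho[G]$ is the pointwise-smallest such element. Since $R$ is an order-reversing, $W_1$-isometric involution of $\CD([a,b])$, it carries the pointwise-largest element of the ball around $R[F]$ to the pointwise-smallest element of the ball around $R[R[F]]=F$, i.e.\ $R\big[\supscr{\F}{up}_\rho[R[F]]\big]=\supscr{\F}{low}_\rho[F]$, which is precisely the asserted formula $\supscr{\F}{low}_\rho[F]=R\big[\supscr{\F}{up}_\rho[R[F]]\big]$.

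I expect the main obstacle to be pinning down the extremality characterization rigorously — i.e.\ proving that $\supscr{\F}{up}_\rho[F]$ is genuinely the pointwise \emph{maximum} (not merely an upper bound) over all $F'\in\CD([a,b])$ with $W_1(F,F')\le\rho$. The inequality $F'\le\supscr{\F}{up}_\rho[F]$ is given by \eqref{CDF:sandwich}; the reverse requires exhibiting, for each fixed $t$, a competitor $F'$ attaining the value $\supscr{\F}{up}_\rho[F](t)$ at that point, which amounts to checking that the "mass-transport to the left" construction underlying the definition (move the portion of $F$'s graph to form a jump up to height $\supscr{\F}{up}_\rho[F](t)$ at abscissa $t$) uses exactly $W_1$-budget $\rho$ and yields a bona fide CDF in $\CD([a,b])$ — this is the content encoded in the integral constraint $\int_{F(t)}^z(F^{-1}(y)-t)\,dy\le\rho$ and in Figure~\ref{fig:CDF:envelope}. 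If one wishes to sidestep this, the direct change-of-variables computation in the previous paragraph works too, at the cost of more bookkeeping with generalized inverses and the endpoint cases $t<a$, $t\in[\supscr{t}{up}_\rho,\infty)$ and their reflections; those endpoint cases are routine once the reflection identities $\supscr{t}{up}_\rho[R[F]]=a+b-\supscr{t}{low}_\rho[F]$ and $\supscr{t}{low}_\rho[R[F]]=a+b-\supscr{t}{up}_\rho[F]$ are in hand, since $R$ then clearly swaps the three regimes in the piecewise definitions. Finally I would note that the hypothesis $\rho\le\int_a^bF(t)\,dt$ is exactly what is needed for $\supscr{\F}{up}_\rho[R[F]]$ to be defined (as computed above), so the statement is not vacuous.
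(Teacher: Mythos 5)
Your proposal is essentially correct, and your fallback route (the direct change-of-variables computation) is the same argument the paper gives: the paper likewise works from the characterizations $y=\supscr{\F}{up}_\rho[F](t)\Leftrightarrow\int_t^{F^{-1}(y)}(y-F(s))\,ds=\rho$ and $y=\supscr{\F}{low}_\rho[F](t)\Leftrightarrow\int_{F^{-1}(y)}^t(F(s)-y)\,ds=\rho$, and uses that the reflection $(t,y)\mapsto(a+b-t,1-y)$ is an isometry of these area integrals. Your preferred route --- realizing the envelopes as pointwise extrema over the Wasserstein ball and transporting them by an order-reversing, $W_1$-isometric involution --- is genuinely different and more conceptual, but it rests on an extremality fact that \cref{lemma:CDF:envelope} does not supply: that lemma only proves the one-sided sandwich \eqref{CDF:sandwich}, so you must additionally show that for each fixed $t$ the value $\supscr{\F}{up}_\rho[F](t)$ is attained by some $F'$ in the ball. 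You correctly identify this as the main obstacle, and the competitor you describe (jump up to height $z$ at abscissa $t$, at cost $\int_{F(t)}^{z}(F^{-1}(y)-t)\,dy=\rho$) does close the gap, at the price of proving a lemma the paper never needs.

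Two caveats. First, your phrase ``the pointwise-largest element of $\CD([a,b])$ lying in the closed $W_1$-ball'' is literally false: the envelope is in general \emph{not} in the ball. For $F(t)=t$ on $[0,1]$ one has $\supscr{\F}{up}_\rho[F](t)=\min\{t+\sqrt{2\rho},1\}$ and hence $W_1(F,\supscr{\F}{up}_\rho[F])=\sqrt{2\rho}-\rho>\rho$ for $\rho<1/2$. What is true --- and what your final paragraph actually argues --- is that the envelope is the pointwise \emph{supremum} over the ball, attained at each $t$ by a $t$-dependent competitor; the reflection argument only needs this, since an order-reversing $W_1$-isometric involution carries pointwise suprema over the ball around $R[F]$ to pointwise infima over the ball around $F$. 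Second, in the direct route the identity $(R[F])^{-1}(w)=a+b-F^{-1}(1-w)$ fails with the paper's generalized inverse on both sides whenever $F$ has a flat piece at level $1-w$; the correct statement is $F^{-1}(1-w)=a+b-\subscr{\widetilde F}{left}^{-1}(w)$ with the \emph{left} inverse of the reflected CDF (the paper's Fact~II), after which one needs that the left and right inverses differ only on a null set of levels so that the area integrals agree (the paper's Fact~III). Neither issue is fatal, but both must be handled to make the proof airtight.
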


Using \cref{lemma:reflected:CDF:envelope}, one can leverage \cref{prop:envelopeconstruction} to obtain the lower CDF envelope $\supscr{\mathcal F}{low}_\rho[F]$ of a  discrete distribution $F\in\CD([a,b])$ with mass $c_i>0$ at a finite number of points $a=: t_0\le t_1<\dots<t_N\le b$ for any $\rho>0$ with $\rho\le\int_a^bF(t)dt$.

\section{CDFs and 1-Wasserstein Distance propagation via the Method of Distributions}\label{sec:CDF:diff}

Here we develop the necessary tools to propagate in space and time the input ambiguity sets constructed in \cref{sec:init:guarantees}.
To obtain an evolution equation for the single-point cumulative distribution function $F_{u(\mathbf x,t)}$ of $u(\mathbf x,t)$, we introduce the random variable $\Pi(U,\mathbf x,t) = \mathcal H(U-u(\mathbf x,t))$,
parameterized by $U \in \mathbb R$. The ensemble mean of $\Pi$ over all possible realizations of $u$ at a point $(\mathbf x, t)$ is the single-point CDF 
\begin{align*}
    \langle \Pi(U,\mathbf x,t) \rangle  = F_{u(\mathbf x,t)}. 
\end{align*}
The dependence of $F_{u(\mathbf x,t)}$ on $U\in \mathbb R$ is implied. We henceforth use the notation $\widetilde \Omega \equiv \mathbb R\times\Omega $, $\widetilde \Gamma \equiv \mathbb R\times \Gamma $,  and $\widetilde {\mathbf x} \equiv (U, \mathbf x)\in \Rat{}\times\Rat{d}$. Using the Method of Distributions~\cite{tartakovsky2017method}, one can obtain the next result, whose derivation is  summarized in \cref{app:MD}.

\begin{thm}[Physics-driven CDF 
equation~\cite{boso-2014-cumulative}]\label{th:CDF:pushforward}
Let $F_{u_0(\mathbf x)}$, $\mathbf x\in\Omega$, and $F_{u_b(\mathbf x,t)}$, 
$(\mathbf x,t)\in\Gamma\times\RgeO$, be the CDFs of the initial and boundary conditions in \eqref{eq:ICsBCs}. Under \cref{assumption:exact:dynamics,assumption:unif:existence}, the CDF $F_{u(\mathbf x,t)}$ 
as a solution of \eqref{eq:tran} obeys 
\begin{equation}\label{eq:MD}
    \frac{\partial F_{u(\mathbf x,t)}}{\partial t} + \boldsymbol \Lambda \cdot \widetilde \nabla F_{u(\mathbf x,t)} = 0, \quad \widetilde{\mathbf x} \in \widetilde \Omega, t\in(0,T)
\end{equation}
with $\boldsymbol \Lambda = (\dot {\boldsymbol q} (U; \boldsymbol \theta_q), r(U; \boldsymbol \theta_r))$ and $\widetilde \nabla = (\nabla, \partial / \partial U) $, with $\dot{\mathbf q} = \partial \mathbf q / \partial U$, and subject to initial and boundary conditions $F_{u_0(\mathbf x)}$ and $F_{u_b(\mathbf x,t)}$, respectively. 
\end{thm}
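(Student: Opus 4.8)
The plan is to obtain, for each fixed realization of the random inputs, a linear transport equation for the indicator field $\Pi(U,\mathbf x,t)=\mathcal H(U-u(\mathbf x,t))$ (interpreted in the sense of distributions in the variable $U$), and then to average over realizations, exploiting that the transport coefficients $\boldsymbol\Lambda$ depend only on $U$ and on the \emph{deterministic} data of the problem.

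First I would differentiate $\Pi$ along a realization $u$, which by \cref{assumption:unif:existence} is smooth on $\Omega\times[0,T)$, so that only the $U$-dependence is distributional. Writing $\mathcal H'=\delta$, one has $\partial_t\Pi=-\delta(U-u)\,\partial_t u$, $\nabla\Pi=-\delta(U-u)\,\nabla u$, and $\partial_U\Pi=\delta(U-u)$. Next I substitute the PDE \eqref{eq:tran}: since \cref{assumption:exact:dynamics} gives $\nabla\cdot\mathbf q(U;\boldsymbol\theta_q)=0$ at every frozen value $U$, the chain rule reduces the flux term to $\nabla\cdot\mathbf q(u;\boldsymbol\theta_q)=\dot{\mathbf q}(u;\boldsymbol\theta_q)\cdot\nabla u$, so that $\partial_t u=-\dot{\mathbf q}(u;\boldsymbol\theta_q)\cdot\nabla u+r(u;\boldsymbol\theta_r)$. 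Feeding this back into $\partial_t\Pi$ and using the sifting identities $\delta(U-u)\,g(u)=\delta(U-u)\,g(U)$, $\delta(U-u)\,\nabla u=-\nabla\Pi$, and $\delta(U-u)=\partial_U\Pi$, I obtain the homogeneous transport equation
\begin{equation*}
\partial_t\Pi+\dot{\mathbf q}(U;\boldsymbol\theta_q)\cdot\nabla\Pi+r(U;\boldsymbol\theta_r)\,\partial_U\Pi=0 \quad\text{on}\;\widetilde\Omega\times(0,T),
\end{equation*}
that is, $\partial_t\Pi+\boldsymbol\Lambda\cdot\widetilde\nabla\Pi=0$ with $\boldsymbol\Lambda=(\dot{\mathbf q}(U;\boldsymbol\theta_q),r(U;\boldsymbol\theta_r))$ and $\widetilde\nabla=(\nabla,\partial/\partial U)$.

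I would then take the ensemble average. Because $\boldsymbol\Lambda$ is deterministic (\cref{assumption:exact:dynamics}) it commutes with $\langle\cdot\rangle$, and because $\Pi$ is uniformly bounded by $1$ the operators $\partial_t$, $\nabla$, $\partial_U$ likewise commute with $\langle\cdot\rangle$ (after pairing with test functions in $U$, by dominated convergence and differentiation under the integral sign). Since $\langle\Pi\rangle=F_{u(\mathbf x,t)}$, this yields \eqref{eq:MD}. For the data, evaluating $\Pi$ on $\{t=0\}$ and on $\widetilde\Gamma$ and averaging gives $\langle\mathcal H(U-u_0(\mathbf x))\rangle=F_{u_0(\mathbf x)}$ and $\langle\mathcal H(U-u_b(\mathbf x,t))\rangle=F_{u_b(\mathbf x,t)}$; prescribing these on $\{t=0\}\cup\widetilde\Gamma$ is consistent with the characteristic directions of $\boldsymbol\Lambda$, since $\Gamma$ is the upstream boundary.

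The main obstacle is the rigor of the distributional calculus rather than any analytic depth: $\Pi$ is discontinuous in $U$, so every identity above is an equality of $U$-distributions, and \eqref{eq:MD} should a priori be read weakly in $U$, collapsing to the pointwise PDE wherever $F_{u(\mathbf x,t)}$ admits a density $\partial_U F$. Making this precise requires (i) justifying the chain-rule and sifting identities by testing against $\varphi\in C_c^\infty(\Rat{})$ and invoking the smoothness of $u$ from \cref{assumption:unif:existence}, and (ii) justifying the interchange of $\langle\cdot\rangle$ with $\partial_t$, $\nabla$, $\partial_U$ from the boundedness of $\Pi$ and measurability of $\mathbf a\mapsto u(\mathbf x,t;\mathbf a)$. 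With these technical points handled, and with the detailed computation deferred to \cref{app:MD}, the claim follows.
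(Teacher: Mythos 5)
Your proposal is correct and follows essentially the same route as the paper's derivation in \cref{app:MD}: introduce the indicator field $\Pi=\mathcal H(U-u)$, use the chain-rule/sifting identities together with \cref{assumption:exact:dynamics} (divergence-free flux at frozen $U$) to obtain the linear transport equation $\partial_t\Pi+\boldsymbol\Lambda\cdot\widetilde\nabla\Pi=0$ for each smooth realization, and then ensemble-average using the determinism of $\boldsymbol\Lambda$ and $\langle\Pi\rangle=F_{u(\mathbf x,t)}$. Your version merely makes the distributional bookkeeping with $\delta(U-u)$ more explicit than the paper's compact $-\partial_U\Pi$ notation, which is a presentational rather than a substantive difference.
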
  


The CDF evolution is governed by the linear hyperbolic PDE \eqref{eq:MD}, which is specific for the physical model \eqref{eq:tran}. The next result exploits
the properties of \eqref{eq:MD} to obtain an upper bound across space and time on the difference between two CDFs.

\begin{corollary}[Propagation of upper bound on difference between CDFs]\label{th:eps:evolution}
Consider a pair of input CDFs $F_{u_0(\mathbf x)}^1$, 
$F_{u_0(\mathbf x)}^2$, $\mathbf x\in\Omega$, and $F_{u_b(\mathbf x,t)}^1$, 
$F_{u_b(\mathbf x,t)}^2$, $(\mathbf x,t)\in\Gamma\times\RgeO$ such that 
\begin{align}\label{eq:cond1}
| e_0(\widetilde{\mathbf x}) | & \ge | \varepsilon_0(\widetilde{\mathbf x}) | = | F_{u_0(\mathbf x)}^1 -F_{u_0(\mathbf x)}^2|,
\quad 
\forall \widetilde{\mathbf x}\in \widetilde \Omega \notag \\
| e_b(\widetilde{\mathbf x},t)| & \ge | \varepsilon_b(\widetilde{\mathbf x},t)| = | F_{u_b(\mathbf x,t)}^1 - F_{u_b(\mathbf x,t)}^2 |, \quad \forall (\widetilde{\mathbf x},t)\in\widetilde\Gamma\times\RgeO.
\end{align} 
Then, it holds that
\begin{align}\label{eq:ineq1}
|e(\widetilde{\mathbf x},t) |\ge | F_{u(\mathbf x,t)}^1 - F_{u(\mathbf x,t)}^2 | = |\varepsilon (\widetilde{\mathbf x},t)|,  \quad \forall (\widetilde{\mathbf x},t)\in 
\widetilde \Omega\times[0,T),
\end{align}
where $F_{u(\mathbf x,t)}^1$ and $F_{u(\mathbf x,t)}^2$ are the solutions 
of~\eqref{eq:MD} for the corresponding initial and boundary data, with $e(\widetilde{\mathbf x},t)$ obeying
\begin{align}\label{eq:disc4}
    & \frac{\partial |e|}{\partial t} + \boldsymbol \Lambda \cdot \widetilde \nabla |e| = 0, & \widetilde{\mathbf x} \in \widetilde \Omega, t>0\notag \\
    & |e(\widetilde{\mathbf x},t=0)| = |e_0(\widetilde{\mathbf x})|, & \widetilde{\mathbf x} \in \widetilde \Omega \notag \\
    & |e(\widetilde{\mathbf x},t)| = |e_b(\widetilde{\mathbf x},t)|, & \widetilde{\mathbf x} \in \widetilde \Gamma, t>0 
\end{align}
\end{corollary}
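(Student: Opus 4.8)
The plan is to combine the linearity of the CDF equation \eqref{eq:MD} with the method of characteristics. First, since \eqref{eq:MD} is linear and homogeneous, the pointwise difference $\varepsilon(\widetilde{\mathbf x},t):=F^1_{u(\mathbf x,t)}-F^2_{u(\mathbf x,t)}$ solves the same transport equation $\partial_t\varepsilon+\boldsymbol\Lambda\cdot\widetilde\nabla\varepsilon=0$ on $\widetilde\Omega\times(0,T)$, with initial datum $\varepsilon_0=F^1_{u_0(\mathbf x)}-F^2_{u_0(\mathbf x)}$ on $\widetilde\Omega$ and boundary datum $\varepsilon_b=F^1_{u_b(\mathbf x,t)}-F^2_{u_b(\mathbf x,t)}$ on $\widetilde\Gamma\times\RgeO$; here I use that $\Gamma$ is the inflow (upstream) boundary in \eqref{eq:ICsBCs}, so this initial--boundary value problem is the well-posed one and superposition applies. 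Because \eqref{eq:MD} carries no zeroth-order term in $F$, $\varepsilon$ is transported unchanged along the characteristic curves $s\mapsto\widetilde{\mathbf X}(s)$ solving $\widetilde{\mathbf X}'(s)=\boldsymbol\Lambda(\widetilde{\mathbf X}(s),s)$: indeed $\tfrac{d}{ds}\varepsilon(\widetilde{\mathbf X}(s),s)=\big(\partial_s\varepsilon+\boldsymbol\Lambda\cdot\widetilde\nabla\varepsilon\big)\big|_{(\widetilde{\mathbf X}(s),s)}=0$. The identical computation shows that $|e|$ as in \eqref{eq:disc4} --- equivalently, the nonnegative function obtained by transporting the data $|e_0|,|e_b|$ along these same characteristics, which coincides with the absolute value of the signed transport of $e_0,e_b$ --- is constant along characteristics as well.

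Next I would invoke the domain-of-dependence structure guaranteed by \cref{assumption:unif:existence}: for any $(\widetilde{\mathbf x},t)\in\widetilde\Omega\times[0,T)$, the characteristic through it, followed backward in time, either (i) reaches the slice $\{s=0\}$ at some $(\widetilde{\mathbf x}^\star,0)$ with $\widetilde{\mathbf x}^\star\in\widetilde\Omega$, or (ii) leaves through $\widetilde\Gamma$ at some $(\widetilde{\mathbf x}^\star,t^\star)$ with $\widetilde{\mathbf x}^\star\in\widetilde\Gamma$, $t^\star\in[0,t]$ --- which is precisely what makes the CDF equation well posed under the upstream data of \eqref{eq:ICsBCs}. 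Transporting $\varepsilon$ and $|e|$ back along this characteristic then gives, in case (i), $|\varepsilon(\widetilde{\mathbf x},t)|=|\varepsilon_0(\widetilde{\mathbf x}^\star)|$ and $|e(\widetilde{\mathbf x},t)|=|e_0(\widetilde{\mathbf x}^\star)|$, and in case (ii), $|\varepsilon(\widetilde{\mathbf x},t)|=|\varepsilon_b(\widetilde{\mathbf x}^\star,t^\star)|$ and $|e(\widetilde{\mathbf x},t)|=|e_b(\widetilde{\mathbf x}^\star,t^\star)|$. Applying the hypothesis \eqref{eq:cond1} at the base point then yields $|\varepsilon(\widetilde{\mathbf x},t)|\le|e(\widetilde{\mathbf x},t)|$ in either case, which is exactly \eqref{eq:ineq1}. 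Equivalently, this last step is just a comparison principle for the linear advection equation \eqref{eq:MD}: $|\varepsilon|$ and $|e|$ solve the same homogeneous transport equation, the data are ordered ($|\varepsilon_0|\le|e_0|$, $|\varepsilon_b|\le|e_b|$), hence so are the solutions.

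I expect the one genuinely delicate point to be the characteristic/domain-of-dependence step: one must argue that every backward characteristic does meet one of the two prescribed data sets within the horizon $[0,T)$ without escaping (e.g., to $U\to\pm\infty$) --- this is where the smoothness and finite time horizon of \cref{assumption:unif:existence} are used, and it is worth stating carefully. A minor technical caveat is that $|e|$ need not be classically differentiable where $e$ vanishes, so \eqref{eq:disc4} is to be read in the along-characteristic (equivalently, Lipschitz / almost-everywhere) sense; since the absolute value commutes with transport along characteristics, this causes no real difficulty.
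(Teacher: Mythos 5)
Your proposal is correct and follows essentially the same route as the paper's proof: use linearity of \eqref{eq:MD} to derive the transport equation for $\varepsilon$, observe that $\varepsilon$ (hence $|\varepsilon|$, since the sign is preserved along characteristics) is conserved along the deterministic characteristics, and conclude by comparing with $|e|$ whose data dominate at the initial/boundary base points. Your added care about the backward-characteristic domain of dependence and the almost-everywhere reading of \eqref{eq:disc4} is a reasonable elaboration of steps the paper leaves implicit, but it is not a different argument.
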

\begin{proof}
Exploiting the linearity of \eqref{eq:MD}, one can write an equation for the difference $\varepsilon (\widetilde{\mathbf x},t) = F^1_{u(\mathbf x,t)} - F^2_{u(\mathbf x,t)}$,
\begin{align}\label{eq:disc}
    & \frac{\partial \varepsilon}{\partial t} + \boldsymbol \Lambda \cdot \widetilde \nabla \varepsilon = 0, & \widetilde{\mathbf x} \in \widetilde \Omega, t\in(0,T) \notag \\
    & \varepsilon(\widetilde{\mathbf x},t=0) = \varepsilon_0(\widetilde{\mathbf x}), & \widetilde{\mathbf x} \in \widetilde \Omega \notag \\
    & \varepsilon(\widetilde{\mathbf x},t) = \varepsilon_b(\widetilde{\mathbf x},t), & \widetilde{\mathbf x} \in \widetilde\Gamma, t>0 
\end{align}
where $\varepsilon_0(\widetilde{\mathbf x}) = F^1_{u_0(\mathbf x)} - F^2_{u_0(\mathbf x)}$ and $\varepsilon_b(\widetilde{\mathbf x},t) = F^1_{u_b(\mathbf x,t)} - F^2_{u_b(\mathbf x,t)}$ are the initial and boundary differences, resp.
\eqref{eq:disc} can be expressed as the ODE system
$ \frac{\text d \varepsilon}{\text d s} = 0$, $\frac{\text d \widetilde{\mathbf x}}{\text d s} = \boldsymbol \Lambda$, $s>0$
with initial/boundary conditions assigned at the intersection between the characteristic lines and the noncharacteristic surface delimiting the space-time domain. 
Pointwise input differences $\varepsilon_0(\widetilde{\mathbf x})$ and $\varepsilon_b(\widetilde{\mathbf x},t)$ are conserved and propagate rigidly along deterministic characteristic lines, hence retaining the sign set by the input.
Since the system dynamics does not change the sign of $\varepsilon$ along the deterministic characteristic lines, $\varepsilon$ and $|\varepsilon|$ obey the same dynamics
\begin{align}\label{eq:disc3}
    & \frac{\partial |\varepsilon|}{\partial t} + \boldsymbol \Lambda \cdot \widetilde \nabla |\varepsilon| = 0, & \widetilde{\mathbf x} \in \widetilde \Omega, t\in(0,T)\notag \\
    & |\varepsilon(\widetilde{\mathbf x},t=0)| = |\varepsilon_0(\widetilde{\mathbf x})|, & \widetilde{\mathbf x} \in \widetilde \Omega \notag \\
    & |\varepsilon(\widetilde{\mathbf x},t)| = |\varepsilon_b(\widetilde{\mathbf x},t)|, & \widetilde{\mathbf x} \in \widetilde \Gamma \times \RgeO. 
\end{align}
For $e_0(\widetilde{\mathbf x},t)$ and $e_b(\widetilde{\mathbf x},t)$ as in \eqref{eq:cond1}, and $|e(\widetilde{\mathbf x},t)|$ obeying \eqref{eq:disc4}, \eqref{eq:disc3} implies \eqref{eq:ineq1}.
\end{proof}

The next result shows that propagation in space and time of CDFs is monotonic.

\begin{corollary}[Propagation of CDFs is monotonic]\label{th:eps:evolution2}
Consider a pair of input CDFs $F_{u_0(\mathbf x)}^1$, 
$F_{u_0(\mathbf x)}^2$, $\mathbf x\in\Omega$, and $F_{u_b(\mathbf x,t)}^1$, 
$F_{u_b(\mathbf x,t)}^2$, $(\mathbf x,t)\in\Gamma\times\RgeO$ such that 
\begin{align}\label{eq:cond3}
F_{u_0(\mathbf x)}^1 \ge F_{u_0(\mathbf x)}^2 & \quad 
\forall \widetilde{\mathbf x}\in \widetilde \Omega \notag \\
F_{u_b(\mathbf x,t)}^1 \ge F_{u_b(\mathbf x,t)}^2  &\quad \forall (\widetilde{\mathbf x},t)\in \widetilde\Gamma\times\RgeO
\end{align} 
Furthermore, we assume $F_{u(\mathbf x,t)}^1$ and $F_{u(\mathbf x,t)}^2$ to be solutions of \eqref{eq:MD} with $F_{u_0(\mathbf x)}^1, F_{u_b(\mathbf x,t)}^1$ and $F_{u_0(\mathbf x)}^2, F_{u_b(\mathbf x,t)}^2$ initial and boundary conditions, respectively. 
Then, it holds that 
\begin{align}\label{eq:ineq3}
F^1_{u(\mathbf x,t)} \ge F_{u(\mathbf x,t)}^2, \forall \widetilde{\mathbf x} \in \widetilde \Omega \times [0,T).
\end{align}
\end{corollary}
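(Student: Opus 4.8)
The plan is to reuse the characteristic representation of the linear transport equation \eqref{eq:MD} that was already exploited in the proof of \cref{th:eps:evolution}. First, set $\varepsilon := F^1_{u(\mathbf x,t)} - F^2_{u(\mathbf x,t)}$. By linearity of \eqref{eq:MD}, $\varepsilon$ solves the same homogeneous transport problem \eqref{eq:disc}, now with initial and boundary data $\varepsilon_0(\widetilde{\mathbf x}) = F^1_{u_0(\mathbf x)} - F^2_{u_0(\mathbf x)}$ and $\varepsilon_b(\widetilde{\mathbf x},t) = F^1_{u_b(\mathbf x,t)} - F^2_{u_b(\mathbf x,t)}$, which are nonnegative on $\widetilde\Omega$ and $\widetilde\Gamma\times\RgeO$, respectively, by hypothesis \eqref{eq:cond3}.

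Next, I would rewrite \eqref{eq:disc} along characteristics exactly as in the proof of \cref{th:eps:evolution}: the PDE is equivalent to the ODE system $\tfrac{\text d\varepsilon}{\text d s}=0$, $\tfrac{\text d\widetilde{\mathbf x}}{\text d s}=\boldsymbol\Lambda$, with data assigned at the intersection of the characteristic lines with the non-characteristic surface delimiting the space-time domain (the initial hyperplane $\{t=0\}\times\widetilde\Omega$ and the upstream boundary $\widetilde\Gamma\times\RgeO$). Since $\varepsilon$ is conserved along each characteristic, its value at an arbitrary point $(\widetilde{\mathbf x},t)\in\widetilde\Omega\times[0,T)$ equals its value at the point where the characteristic through $(\widetilde{\mathbf x},t)$ meets that surface; by the previous paragraph this value is $\ge 0$. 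Hence $\varepsilon\ge 0$ on all of $\widetilde\Omega\times[0,T)$, which is precisely \eqref{eq:ineq3}.

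The only substantive point — and it is already implicit in \cref{th:CDF:pushforward,th:eps:evolution} under \cref{assumption:exact:dynamics,assumption:unif:existence} — is that the characteristics of \eqref{eq:MD} foliate $\widetilde\Omega\times[0,T)$ and each one reaches either the initial hyperplane or the upstream boundary $\widetilde\Gamma$, so that the value of $\varepsilon$ at every interior point is inherited from nonnegative data; this is where the divergence-free flux and smooth-solution assumptions enter. Once this is granted, no estimates are needed: monotonicity is nothing but sign preservation along characteristics, the same mechanism underlying \cref{th:eps:evolution}. (Equivalently, one could phrase the conclusion as a comparison/maximum principle for the linear hyperbolic equation \eqref{eq:MD}, but the characteristic argument is the most direct and keeps the proof parallel to that of \cref{th:eps:evolution}.)
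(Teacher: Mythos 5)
Your argument is correct and coincides with the paper's own proof: the paper likewise observes that $\varepsilon = F^1_{u(\mathbf x,t)} - F^2_{u(\mathbf x,t)}$ obeys the transport problem \eqref{eq:disc} with nonnegative initial and boundary data and concludes $\varepsilon \ge 0$ by sign preservation along the characteristics already set up in the proof of \cref{th:eps:evolution}. Your additional remark that every characteristic traces back to the initial hyperplane or the upstream boundary simply makes explicit what the paper leaves implicit.
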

\begin{proof}
The discrepancy $\varepsilon(\widetilde{\mathbf x},t) = F_{u(\mathbf x,t)}^1 - F_{u(\mathbf x,t)}^2$ obeys \eqref{eq:disc}. Given non-negative initial and boundary conditions, consistently with \eqref{eq:cond3}, it holds that $\varepsilon(\widetilde{\mathbf x},t) \ge 0$ for all $\widetilde{\mathbf x} \in \widetilde \Omega, t\in(0,T)$, hence \eqref{eq:ineq3}.
\end{proof}

The CDF equation~\eqref{eq:MD} provides a computational tool for the space-time propagation of the CDFs of the inputs. 
If the governing equation \eqref{eq:tran} is linear, we show next that one can obtain an evolution equation in the form of a PDE for the 1-Wasserstein distance between each pair of distributions describing the same underlying physical process.

\begin{thm}[Physics-driven 1-Wasserstein discrepancy equation]
\label{th:WR:evol}
Consider a pair of distributions $F^1_{u(\mathbf x,t)}$ and $F^2_{u(\mathbf x,t)}$ obeying \eqref{eq:MD},
and assume linearity of \eqref{eq:tran}. Then, the 1-Wasserstein discrepancy between $F^1_{u({\mathbf x},t)}$ and $F^2_{u({\mathbf x},t)}$ defined by \eqref{eq:W1def}, $\omega_1 (\mathbf x,t) = \int_{\mathbb R} |F_{u(\mathbf x,t)}^1  - F_{u(\mathbf x,t)}^2 | \text d U$, obeys 
\begin{align}\label{eq:W1_PDE}
    & \frac{\partial \omega_1}{\partial t} + \dot {\boldsymbol q} \cdot \nabla \omega_1 - \dot r \; \omega_1 = 0, & {\mathbf x} \in \Omega, t>0\notag \\
    & \omega_1({\mathbf x},t=0) = \omega_0 (\mathbf x) , & {\mathbf x} \in \Omega \notag \\
    & \omega_1({\mathbf x},t) = \omega_b (\mathbf x,t), & {\mathbf x} \in \Gamma, t>0,
\end{align}
with  {$\omega_0(\mathbf x) = \int_{\mathbb R} |F_{u_0 (\mathbf x)}^1 - F_{u_0 (\mathbf x)}^2 | \text d 
U$ and $\omega_b = \int_{\mathbb R} |F_{u_b (\mathbf x,t)}^1 - F_{u_b (\mathbf x,t)}^2 | 
\text d U$} the input discrepancies.
\end{thm}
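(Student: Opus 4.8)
The plan is to integrate, over the auxiliary variable $U\in\mathbb{R}$, the linear transport equation that $|\varepsilon|:=|F^1_{u(\mathbf x,t)}-F^2_{u(\mathbf x,t)}|$ already satisfies by \cref{th:eps:evolution}, and to read off \eqref{eq:W1_PDE} term by term, using that linearity of \eqref{eq:tran} makes the relevant coefficients $U$-independent. Concretely, linearity means $\mathbf q(u;\boldsymbol\theta_q)$ and $r(u;\boldsymbol\theta_r)$ are affine in $u$, so $\dot{\mathbf q}=\partial\mathbf q/\partial U$ does not depend on $U$, and $r(U;\boldsymbol\theta_r)=\dot r\,U+r_0$ with $\dot r=\partial r/\partial U$ independent of $U$ (both may still depend on $(\mathbf x,t)$). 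Since the characteristic field $\boldsymbol\Lambda=(\dot{\mathbf q},r(U))$ of \eqref{eq:MD} does not involve the solution itself, $\varepsilon=F^1_{u(\mathbf x,t)}-F^2_{u(\mathbf x,t)}$ is constant along characteristics — in particular it never changes sign — so $|\varepsilon|$ obeys the same equation \eqref{eq:disc3}, i.e.
\[
\frac{\partial|\varepsilon|}{\partial t}+\dot{\mathbf q}\cdot\nabla|\varepsilon|+r(U)\,\frac{\partial|\varepsilon|}{\partial U}=0 .
\]

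I would then integrate this identity in $U$ over $\mathbb{R}$ and handle the three terms as follows. The input CDFs are supported on the compact intervals $[\alpha_0(\mathbf x),\gamma_0(\mathbf x)]$ and $[\alpha_b(\mathbf x,t),\gamma_b(\mathbf x,t)]$ of \cref{prop:init:bnd:ambiguity:sets}, and they are transported rigidly along the solution-independent characteristics of \eqref{eq:MD}; hence $|\varepsilon(\cdot,\mathbf x,t)|$ has compact $U$-support, locally uniformly in $(\mathbf x,t)$, and, together with the smoothness granted by \cref{assumption:unif:existence}, this legitimizes differentiating under the integral sign. Therefore $\int_{\mathbb{R}}\partial_t|\varepsilon|\,dU=\partial_t\omega_1$ and, since $\dot{\mathbf q}$ is $U$-independent, $\int_{\mathbb{R}}\dot{\mathbf q}\cdot\nabla|\varepsilon|\,dU=\dot{\mathbf q}\cdot\nabla\omega_1$. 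For the third term I integrate by parts in $U$: the boundary contribution $[\,r(U)\,|\varepsilon|\,]_{U=-\infty}^{U=+\infty}$ vanishes because the compact $U$-support of $|\varepsilon|$ suppresses the linear growth of $r$, leaving $\int_{\mathbb{R}}r(U)\,\partial_U|\varepsilon|\,dU=-\int_{\mathbb{R}}\dot r\,|\varepsilon|\,dU=-\dot r\,\omega_1$. Summing the three contributions gives $\partial_t\omega_1+\dot{\mathbf q}\cdot\nabla\omega_1-\dot r\,\omega_1=0$, which is the PDE in \eqref{eq:W1_PDE}; the initial and boundary conditions $\omega_0(\mathbf x)=\int_{\mathbb{R}}|F^1_{u_0(\mathbf x)}-F^2_{u_0(\mathbf x)}|\,dU$ and $\omega_b(\mathbf x,t)=\int_{\mathbb{R}}|F^1_{u_b(\mathbf x,t)}-F^2_{u_b(\mathbf x,t)}|\,dU$ are obtained by evaluating $\omega_1$ at $t=0$ and on $\Gamma$.

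The genuinely delicate step is the analytic bookkeeping rather than the computation: one must verify that $|\varepsilon|$ keeps a compact $U$-support under the characteristic flow (so that both the interchange of $\partial_t,\nabla$ with $\int dU$ and the vanishing of the boundary term are justified) and deal with the lack of differentiability of $|\varepsilon|$ where $\varepsilon$ changes sign — which is exactly why appealing to \cref{th:eps:evolution}, where the sign of $\varepsilon$ is shown to be frozen along characteristics, is the crucial ingredient. An alternative that avoids regularity concerns entirely is to argue in Lagrangian coordinates: express $F^i(U,\mathbf x,t)$ as the corresponding input CDF evaluated at the foot of the characteristic through $(U,\mathbf x,t)$, and compute $\omega_1$ by changing variables $U\mapsto$ its characteristic preimage; the $U$-Jacobian of this change of variables produces precisely the factor responsible for the $-\dot r\,\omega_1$ term, recovering \eqref{eq:W1_PDE} directly.
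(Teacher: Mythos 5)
Your proof follows essentially the same route as the paper's: integrate the transport equation satisfied by $|\varepsilon|$ over $U$, pull $\dot{\mathbf q}$ out of the integral by linearity, and integrate the $r(U)\,\partial_U|\varepsilon|$ term by parts so that the vanishing boundary contribution leaves $-\dot r\,\omega_1$. Your justification of the vanishing boundary term via compact $U$-support is in fact slightly more careful than the paper's appeal to $F^1(U=\pm\infty)=F^2(U=\pm\infty)$, since the linear growth of $r(U)$ requires decay faster than mere agreement of the limits.
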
  
\begin{proof}
\eqref{eq:W1_PDE} follows from \eqref{eq:disc4} by integration along $U \in \mathbb R$ assuming $F_{u(\mathbf x,t)}^1(U = \pm\infty) = F_{u(\mathbf x,t)}^2(U = \pm \infty)$, for all $\mathbf x \in \Omega,t>0$, accounting for the linearity of $\mathbf q(U)$ and $r(U)$. 
\end{proof}

\cref{th:eps:evolution} and the following  \cref{cor:W1:evolution} take advantage of the linearity and hyperbolic structure of \eqref{eq:disc4} and \eqref{eq:W1_PDE}, respectively, and identify a dynamic bound for the evolution of the pointwise CDF absolute difference and their 1-Wasserstein distance, respectively, once the corresponding discrepancies are set at the initial time and along the boundaries.

\begin{corollary}[Physics-driven 1-Wasserstein dynamic bound]
\label{cor:W1:evolution}
Consider the 
input CDF pairs  $F_{u_0(\mathbf x)}^1$, 
$F_{u_0(\mathbf x)}^2$, $\mathbf x\in\Omega$, and $F_{u_b(\mathbf x,t)}^1$, 
$F_{u_b(\mathbf x,t)}^2$, $(\mathbf x,t)\in\Gamma\times\RgeO$. Let $w(\mathbf x,t)$ 
be the solution of~\eqref{eq:W1_PDE} with initial and boundary conditions 
satisfying 
\begin{align}\label{eq:cond2}
w_0(\mathbf x) & \ge \omega_0(\mathbf x) = W_1\big(F_{u_0(\mathbf x)}^1,F_{u_0(\mathbf x)}^2\big) \quad 
\forall \mathbf x\in\Omega \notag \\
w_b(\mathbf x,t) & \ge \omega_b(\mathbf x,t) = W_1\big(F_{u_b(\mathbf x,t)}^1,F_{u_b(\mathbf x,t)}^2\big)\quad \forall (\mathbf x,t)\in\Gamma\times\RgeO.
\end{align} 
Then, it holds that
\begin{align}\label{eq:ineq2}
\omega_1(\mathbf x,t) = W_1\big(F_{u(\mathbf x,t)}^1,F_{u(\mathbf x,t)}^2\big)\le w(\mathbf x,t) \quad \forall (\mathbf x,t)\in\Omega\times\RgeO,
\end{align}
where $F_{u(\mathbf x,t)}^1$ and $F_{u(\mathbf x,t)}^2$ are the solutions 
of~\eqref{eq:MD} for the corresponding initial and boundary distributions.  
\end{corollary}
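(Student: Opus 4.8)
The plan is to reduce the statement to a comparison (maximum) principle for the linear hyperbolic PDE \eqref{eq:W1_PDE}, exactly in the spirit of \cref{th:eps:evolution2} but now at the level of the 1-Wasserstein discrepancy rather than the CDF difference. First I would invoke \cref{th:WR:evol} to note that the \emph{actual} discrepancy $\omega_1(\mathbf x,t)=W_1\big(F^1_{u(\mathbf x,t)},F^2_{u(\mathbf x,t)}\big)=\int_{\mathbb R}|F^1_{u(\mathbf x,t)}-F^2_{u(\mathbf x,t)}|\,dU$ is itself a solution of \eqref{eq:W1_PDE}, with initial and boundary data $\omega_0(\mathbf x)$ and $\omega_b(\mathbf x,t)$ in place of $w_0$ and $w_b$. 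Since \eqref{eq:W1_PDE} is linear in its unknown, the difference $v:=w-\omega_1$ solves the same equation, $\partial_t v+\dot{\mathbf q}\cdot\nabla v-\dot r\,v=0$ on $\Omega\times\RgeO$, with $v(\mathbf x,0)=w_0(\mathbf x)-\omega_0(\mathbf x)\ge 0$ and $v(\mathbf x,t)=w_b(\mathbf x,t)-\omega_b(\mathbf x,t)\ge 0$ by \eqref{eq:cond2}. It therefore suffices to show that nonnegative initial and boundary data for \eqref{eq:W1_PDE} produce a nonnegative solution, since then $v\ge 0$ yields \eqref{eq:ineq2}.

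To prove this comparison principle I would pass to characteristics, as in the proofs of \cref{th:eps:evolution,th:eps:evolution2}. Under linearity of \eqref{eq:tran}, $\dot{\mathbf q}=\partial\mathbf q/\partial U$ and $\dot r=\partial r/\partial U$ are independent of $U$, so along a characteristic $s\mapsto(\mathbf x(s),s)$ defined by $\dot{\mathbf x}=\dot{\mathbf q}(\mathbf x,s)$, the restriction $y(s):=v(\mathbf x(s),s)$ obeys the scalar linear ODE $\dot y=\dot r(\mathbf x(s),s)\,y$, hence $y(s)=y(s_0)\exp\big(\int_{s_0}^s\dot r\,d\sigma\big)$, where $s_0$ is the parameter at which the characteristic hits the noncharacteristic surface $(\Omega\times\{0\})\cup(\Gamma\times\RgeO)$ delimiting the space-time domain. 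The integrating factor is strictly positive, so $y(s)$ has the same sign as $y(s_0)$, which equals $w_0-\omega_0\ge 0$ when the foot of the characteristic lies on $\Omega\times\{0\}$ and $w_b-\omega_b\ge 0$ when it lies on $\Gamma\times\RgeO$. Tracing the characteristic through an arbitrary $(\mathbf x,t)\in\Omega\times\RgeO$ back to its foot then gives $v(\mathbf x,t)\ge 0$, i.e.\ $\omega_1(\mathbf x,t)\le w(\mathbf x,t)$.

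The step I expect to be the main obstacle is the bookkeeping for characteristics in the semi-infinite domain $\Omega$ with upstream boundary $\Gamma$: one must argue that every characteristic reaching a point of $\Omega\times\RgeO$ can indeed be continued backward to a foot on $\Omega\times\{0\}$ or on $\Gamma\times\RgeO$, so that the sign information supplied by \eqref{eq:cond2} is actually available there, and that it does not leave the domain through a portion of the boundary carrying no data. This is the same well-posedness structure (determined by the signs of the characteristic speeds $\dot{\mathbf q}$) that renders \eqref{eq:MD}, and hence \eqref{eq:W1_PDE}, solvable with the prescribed initial/boundary conditions; I would state it and refer to the characteristic-ODE reformulation already used in \cref{th:eps:evolution}, under \cref{assumption:exact:dynamics,assumption:unif:existence}. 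A secondary point worth a brief remark is that, because $\omega_1$ is built from an absolute value, it should be read as the characteristic (weak) solution of \eqref{eq:W1_PDE} in the sense established by \cref{th:WR:evol}, which is exactly what the characteristic computation above uses.
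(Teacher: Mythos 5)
Your proposal is correct and follows essentially the same route as the paper: both $w$ and $\omega_1$ solve the linear equation \eqref{eq:W1_PDE} with ordered initial/boundary data, and the sign of their difference is preserved along characteristics by the positive integrating factor $\exp\big(\int \dot r\big)$. The paper states this comparison in a single sentence, leaving the characteristic argument (and the well-posedness bookkeeping you flag) implicit from \cref{th:eps:evolution}, so your write-up is simply a more explicit version of the same proof.
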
 
\begin{proof}
\eqref{eq:ineq2} follows from condition \eqref{eq:cond2} and having 
$w(\mathbf x,t)$ and $\omega_1(\mathbf x,t)$ that fulfill \eqref{eq:W1_PDE} with conditions $w_0,w_b$ and $\omega_0,\omega_b$, respectively.
\end{proof}

\section{Ambiguity set propagation under finite-sample guarantees} \label{sec:ambiguity:evolution}

Here we combine the results from \cref{sec:init:guarantees,sec:CDF:diff} to build pointwise ambiguity sets for the distribution of $u(\mathbf x,t)$ 
over the whole spatio-temporal domain. We first consider the general PDE model \eqref{eq:tran} and study how the input ambiguity bands of \cref{corollary:CDF:ambig:sets} propagate in space and time using the CDF equation \eqref{eq:MD}.  

\begin{thm}[Ambiguity band evolution via the CDF dynamics] 
\label{thm:ambiguity:evolution} Assume that $N$ pairs of input samples are collected according to Assumption~\ref{assumption:sampling}. Consider a confidence $1-\beta$ and the CDFs
\begin{align*}
\supscr{F}{low}_{u_0(\mathbf x)} & :=\supscr{\F}{low}_{\rho_0(\mathbf 
x),[\alpha_0(\mathbf x),\gamma_0(\mathbf x)]}\big[\widehat F_{u_0(\mathbf 
x)}^N\big],\quad \mathbf x\in\Omega \\
\supscr{F}{low}_{u_b(\mathbf x,t)} & :=\supscr{\F}{low}_{\rho_b(\mathbf x,t),[\alpha_b(\mathbf x,t),\gamma_b(\mathbf x,t)]}\big[\widehat F_{u_b(\mathbf x,t)}^N\big],\quad (\mathbf x,t)\in\Gamma\times\RgeO \\
\supscr{F}{up}_{u_0(\mathbf x)} & :=\supscr{\F}{up}_{\rho_0(\mathbf 
x),[\alpha_0(\mathbf x),\gamma_0(\mathbf x)]}\big[\widehat F_{u_0(\mathbf 
x)}^N\big],\quad \mathbf x\in\Omega \\
\supscr{F}{up}_{u_b(\mathbf x,t)} & :=\supscr{\F}{up}_{\rho_b(\mathbf x,t),[\alpha_b(\mathbf x,t),\gamma_b(\mathbf x,t)]}\big[\widehat F_{u_b(\mathbf x,t)}^N\big], \quad (\mathbf x,t)\in\Gamma\times\RgeO,
\end{align*}
with $[\alpha_0(\mathbf x),\gamma_0(\mathbf x)]$, $[\alpha_b(\mathbf x,t),\gamma_b(\mathbf x,t)]$ and  $\rho_0(\mathbf x)$, $\rho_b(\mathbf x,t)$ as 
given in \eqref{interv:alpha:gamma:0}, \eqref{interv:alpha:gamma:b} and \eqref{rho:zero}, \eqref{rho:beta}, respectively. Let $\supscr{F}{low}_{u(\mathbf x,t)}$ and $\supscr{F}{up}_{u(\mathbf x,t)}$ be the solutions of~\eqref{eq:MD} with the corresponding input CDFs above and define the ambiguity sets \begin{align*} 
\P_{\mathbf x,t}^{\Env}:= \big\{F\in \CD(\Rat{})\,|\,\supscr{F}{low}_{u(\mathbf x,t)} \le F\le \supscr{F}{up}_{u(\mathbf x,t)} \;\forall U\in\Rat{}\big\}, \quad 
\mathbf x\in\Omega, t\in[0,T).  
\end{align*}
Then $\bP(\supscr{F}{true}_{u(\mathbf x,t)}\in \P_{\mathbf x,t}^{\Env} \;\forall 
(\mathbf x,t)\in\Omega\times[0,T)) \ge 1-\beta$.
\end{thm}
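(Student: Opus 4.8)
The plan is to transfer the finite-sample guarantee of \cref{corollary:CDF:ambig:sets} for the input ambiguity bands to the propagated bands by a pathwise comparison argument: on the high-probability event where the true input CDFs are sandwiched between the input envelopes, the comparison principle for the linear CDF equation \eqref{eq:MD} forces the true propagated CDF to stay sandwiched between the propagated envelopes throughout $\widetilde\Omega\times[0,T)$.

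Concretely, I would first fix the event $E:=\{\supscr{F}{true}_{u_0(\mathbf x)}\in\P_{\mathbf x}^{0,\Env}\ \forall\mathbf x\in\Omega\}\cap\{\supscr{F}{true}_{u_b(\mathbf x,t)}\in\P_{\mathbf x,t}^{b,\Env}\ \forall(\mathbf x,t)\in\Gamma\times\RgeO\}$, which by \cref{corollary:CDF:ambig:sets} satisfies $\bP(E)\ge 1-\beta$. Unwinding the definitions of $\P_{\mathbf x}^{0,\Env}$ and $\P_{\mathbf x,t}^{b,\Env}$, on $E$ one has the pointwise inequalities $\supscr{F}{low}_{u_0(\mathbf x)}\le\supscr{F}{true}_{u_0(\mathbf x)}\le\supscr{F}{up}_{u_0(\mathbf x)}$ on $\widetilde\Omega$ and $\supscr{F}{low}_{u_b(\mathbf x,t)}\le\supscr{F}{true}_{u_b(\mathbf x,t)}\le\supscr{F}{up}_{u_b(\mathbf x,t)}$ on $\widetilde\Gamma\times\RgeO$. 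Moreover, by \cref{th:CDF:pushforward}, $\supscr{F}{true}_{u(\mathbf x,t)}$ solves \eqref{eq:MD} with the true input CDFs as initial and boundary data, while $\supscr{F}{up}_{u(\mathbf x,t)}$ and $\supscr{F}{low}_{u(\mathbf x,t)}$ solve \eqref{eq:MD} with the respective envelope data, the latter being continuous CDFs by \cref{lemma:CDF:envelope} so that \eqref{eq:MD} is well posed.

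Next I would invoke \cref{th:eps:evolution2} twice on $E$: once with $(F^1,F^2)=(\supscr{F}{up},\supscr{F}{true})$ for both the initial and the boundary data, obtaining $\supscr{F}{true}_{u(\mathbf x,t)}\le\supscr{F}{up}_{u(\mathbf x,t)}$ on $\widetilde\Omega\times[0,T)$; and once with $(F^1,F^2)=(\supscr{F}{true},\supscr{F}{low})$, obtaining $\supscr{F}{low}_{u(\mathbf x,t)}\le\supscr{F}{true}_{u(\mathbf x,t)}$ on $\widetilde\Omega\times[0,T)$. Combining the two, on $E$ we get $\supscr{F}{low}_{u(\mathbf x,t)}\le\supscr{F}{true}_{u(\mathbf x,t)}\le\supscr{F}{up}_{u(\mathbf x,t)}$ for all $U\in\Rat{}$ and all $(\mathbf x,t)\in\Omega\times[0,T)$; since $\supscr{F}{true}_{u(\mathbf x,t)}\in\CD(\Rat{})$, this is exactly the statement $\supscr{F}{true}_{u(\mathbf x,t)}\in\P_{\mathbf x,t}^{\Env}$. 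Hence $E$ is contained in the event $\{\supscr{F}{true}_{u(\mathbf x,t)}\in\P_{\mathbf x,t}^{\Env}\ \forall(\mathbf x,t)\in\Omega\times[0,T)\}$, and the claimed bound follows from $\bP(E)\ge 1-\beta$.

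Since the argument chains already-established results, I do not expect a real obstacle; the point needing a line of justification is that \cref{th:eps:evolution2} is applied with one of its two arguments being the random object $\supscr{F}{true}_{u(\mathbf x,t)}$ and the other a sample-dependent envelope. This is legitimate because \cref{th:eps:evolution2} is a deterministic comparison statement about ordered solutions of the linear equation \eqref{eq:MD}, hence valid on every sample path in $E$; as in the proof of \cref{corollary:CDF:ambig:sets}, the reasoning is carried out at the level of events (intersecting with $E$), not as an inclusion of ambiguity sets, since here the relevant CDF sets sit at different space-time locations. A minor auxiliary remark is that the propagated bounds $\supscr{F}{up}_{u(\mathbf x,t)},\supscr{F}{low}_{u(\mathbf x,t)}$ are themselves CDFs in $U$ — so that $\P_{\mathbf x,t}^{\Env}$ is a nonempty, well-defined ambiguity set — which is a consequence of the method-of-distributions structure of \eqref{eq:MD}.
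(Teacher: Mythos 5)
Your proposal is correct and follows essentially the same route as the paper: fix the high-probability event from \cref{corollary:CDF:ambig:sets} (your $E$ is the paper's event $\{(\mathbf a^1,\ldots,\mathbf a^N)\in A\}$), unwind the input sandwich inequalities on that event, and apply the monotonicity result \cref{th:eps:evolution2} twice to propagate the sandwich to all $(\mathbf x,t)\in\Omega\times[0,T)$. The only differences are presentational — you make explicit the role of \cref{th:CDF:pushforward} and the pathwise nature of the comparison, which the paper leaves implicit.
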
 
\begin{proof}
Let
\begin{align*}
A:=\{\mathbf (\mathbf a^1,\ldots,\mathbf a^N)\in\Rat{Nn}\,|\, & \supscr{F}{true}_{u_0(\mathbf x)}\in \P_{\mathbf x}^{0,\Env}(\mathbf a^1,\ldots,\mathbf a^N)\;\forall \mathbf x\in\Omega \\
& \land\;\supscr{F}{true}_{u_b(\mathbf x,t)}\in \P_{\mathbf x,t}^{b,\Env}(\mathbf a^1,\ldots,\mathbf a^N)\;\forall (\mathbf x,t)\in\Gamma\times\RgeO\},
\end{align*}
with $\P_{\mathbf x}^{0,\Env}$ and $\P_{\mathbf x,t}^{b,\Env}$ as given in \cref{corollary:CDF:ambig:sets}, where we emphasize their dependence on the parameter realizations. Then, we have from \eqref{guarantee:init:bnd:env} that  
\begin{align} \label{event:A:probability}
\bP((\mathbf a^1,\ldots,\mathbf a^N)\in A)\ge 1-\beta.
\end{align}
Next, let $(\mathbf a^1,\ldots,\mathbf a^N)\in A$ and $\widehat F_{u_0(\mathbf x)}^N\equiv\widehat F_{u_0(\mathbf x)}^N(\mathbf a^1,\ldots,\mathbf a^N)$, $\mathbf x\in\Omega$, $\widehat F_{u_b(\mathbf x,t)}^N\equiv\widehat F_{u_b(\mathbf x,t)}^N(\mathbf a^1,\ldots,\mathbf a^N)$, $(\mathbf x,t)\in\Gamma\times\RgeO$ be the associated empirical input CDFs. These generate the corresponding lower CDF envelopes $\supscr{F}{low}_{u_0(\mathbf x)}\equiv \supscr{F}{low}_{u_0(\mathbf x)}(\mathbf a^1,\ldots,\mathbf a^N)$ and $\supscr{F}{low}_{u_b(\mathbf x,t)}\equiv \supscr{F}{low}_{u_b(\mathbf x,t)}(\mathbf a^1,\ldots,\mathbf a^N)$ given in the statement, and we deduce from the definitions of $A$ and the ambiguity sets $\P_{\mathbf x}^{0,\Env}$, $\P_{\mathbf x,t}^{b,\Env}$ that $
\supscr{F}{true}_{u_0(\mathbf x)}(U)  \ge \supscr{F}{low}_{u_0(\mathbf x)}(U)$ for all $U\in\Rat{},\mathbf x\in\Omega$ and $
\supscr{F}{true}_{u_b(\mathbf x,t)}(U)  \ge \supscr{F}{low}_{u_b(\mathbf x,t)}(U)$ for all $U\in\Rat{}, (\mathbf x,t)\in\Gamma\times\RgeO.
$
Thus, we obtain from \cref{th:eps:evolution2} applied with $F_u^1\equiv 
\supscr{F}{true}_u$ and $F_u^2\equiv\supscr{F}{low}_u$ that 
\begin{align*} 
\supscr{F}{true}_{u(\mathbf x,t)}(U) & \ge \supscr{F}{low}_{u(\mathbf x,t)}(U) \quad\forall U\in\Rat{}, (\mathbf x,t)\in\Omega\times[0,T).
\end{align*}
Analogously, we get that 
$\supscr{F}{true}_{u(\mathbf x,t)}(U)  \le \supscr{F}{up}_{u(\mathbf x,t)}(U)$ for all $U\in\Rat{}, (\mathbf x,t)\in\Omega\times[0,T)$,
and we deduce from the definition of the ambiguity sets $\P_{\mathbf x,t}^{\Env}$ in the statement that
\begin{align*}
\supscr{F}{true}_{u(\mathbf x,t)}\in \P_{\mathbf x,t}^{\Env}(\mathbf 
a^1,\ldots,\mathbf a^N)\quad \forall U\in\Rat{}, (\mathbf x,t)\in\Omega\times[0,T).
\end{align*}  
The result now follows from~\eqref{event:A:probability}.
\end{proof}

Under linearity of the dynamics, we can exploit \cref{cor:W1:evolution} to propagate the tighter Wasserstein input ambiguity balls of \cref{prop:init:bnd:ambiguity:sets}.  

\begin{thm}[Ambiguity set evolution for linear dynamics] 
\label{eq:th:guarantees}
Assume that PDE \eqref{eq:tran} is linear and $N$ pairs of input  samples are collected according to \cref{assumption:sampling}. Consider a confidence level $1-\beta$ and let $w(\mathbf x,t)$ be the solution of~\eqref{eq:W1_PDE} with $
w_0(\mathbf x) = L_0(\mathbf x) \epsilon_N(\beta,\rho_{\mathbf a})$, $ 
\mathbf x\in\Omega$ and $w_b(\mathbf x,t)  = L_b(\mathbf x,t) \epsilon_N(\beta,\rho_{\mathbf a})$, $(\mathbf x,t)\in\Gamma\times\RgeO$, and $L_0(\mathbf x)$, $L_b(\mathbf x,t)$, $\rho_{\mathbf a}$, and  $\epsilon_N(\beta,\rho_{\mathbf a})$ given by~\eqref{Lip:constant:init}, \eqref{Lip:constant:bnd}, \eqref{rho:a}, and~\eqref{epsN:dfn}. Let $\widehat F_{u(\mathbf x,t)}^N$ be the 
solution of~\eqref{eq:MD} with the empirical input CDFs $\widehat F_{u_0(\mathbf x)}^N$ and $\widehat F_{u_b(\mathbf x,t)}^N$ as given in \cref{sec:init:guarantees} and define the ambiguity sets 
\begin{align*} 
\P_{\mathbf x,t}:= \big\{F\in \CD(\Rat{})\,|\,W_1(\widehat F_{u(\mathbf 
x,t)}^N,F)\le w(\mathbf x,t)\big\}, \quad \mathbf x\in\Omega, t\in\RgeO.  
\end{align*}
Then $\bP(\supscr{F}{true}_{u(\mathbf x,t)}\in \P_{\mathbf x,t} \;\forall (\mathbf x,t)\in\Omega\times\RgeO) \ge 1-\beta$.
\end{thm}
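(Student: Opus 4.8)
The plan is to reduce the statement to a single high-probability event in the parameter-sample space on which all the input $1$-Wasserstein discrepancies are controlled, and then to transport that control forward in space and time using the bound of \cref{cor:W1:evolution}; the confidence $1-\beta$ is thereby inherited, unchanged, from the finite-sample guarantee in the parameter domain, exactly as in the proofs of \cref{prop:init:bnd:ambiguity:sets} and \cref{thm:ambiguity:evolution}.

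First I would fix the confidence $1-\beta$ and invoke \cref{lemma:ambiguity:radius} with $p=1$ to obtain that the event
\begin{align*}
A:=\big\{(\mathbf a^1,\ldots,\mathbf a^N)\in\Rat{Nn}\,\big|\,W_1(\widehat P_{\mathbf a}^N,\supscr{P}{true}_{\mathbf a})\le\epsilon_N(\beta,\rho_{\mathbf a})\big\},
\end{align*}
with $\widehat P_{\mathbf a}^N=\frac1N\sum_{i=1}^N\delta_{\mathbf a^i}$ and $\rho_{\mathbf a}$ as in \eqref{rho:a}, satisfies $\bP((\mathbf a^1,\ldots,\mathbf a^N)\in A)\ge 1-\beta$. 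On $A$, using the push-forward identities $\widehat P_{u_0(\mathbf x)}^N=u_0[\mathbf x]_\#\widehat P_{\mathbf a}^N$, $\supscr{P}{true}_{u_0(\mathbf x)}=u_0[\mathbf x]_\#\supscr{P}{true}_{\mathbf a}$ (and their boundary analogues), the Lipschitz hypotheses \eqref{Lip:constant:init}, \eqref{Lip:constant:bnd}, and \cref{lemma:Lipschitz:map} — precisely the estimates already carried out inside the proof of \cref{prop:init:bnd:ambiguity:sets} — one gets
\begin{align*}
W_1\big(\widehat F_{u_0(\mathbf x)}^N,\supscr{F}{true}_{u_0(\mathbf x)}\big)&\le L_0(\mathbf x)\epsilon_N(\beta,\rho_{\mathbf a})=w_0(\mathbf x),\qquad\forall\mathbf x\in\Omega,\\
W_1\big(\widehat F_{u_b(\mathbf x,t)}^N,\supscr{F}{true}_{u_b(\mathbf x,t)}\big)&\le L_b(\mathbf x,t)\epsilon_N(\beta,\rho_{\mathbf a})=w_b(\mathbf x,t),\qquad\forall(\mathbf x,t)\in\Gamma\times\RgeO.
\end{align*}

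Next I would propagate. Set $F_{u(\mathbf x,t)}^1:=\supscr{F}{true}_{u(\mathbf x,t)}$ and $F_{u(\mathbf x,t)}^2:=\widehat F_{u(\mathbf x,t)}^N$; by \cref{th:CDF:pushforward} these are the (unique) solutions of \eqref{eq:MD} with input CDFs $\supscr{F}{true}_{u_0(\mathbf x)},\supscr{F}{true}_{u_b(\mathbf x,t)}$ and $\widehat F_{u_0(\mathbf x)}^N,\widehat F_{u_b(\mathbf x,t)}^N$, respectively, and both remain genuine one-point CDFs, so that the integral quantity $\omega_1(\mathbf x,t)=\int_{\Rat{}}|F_{u(\mathbf x,t)}^1-F_{u(\mathbf x,t)}^2|\,dU$ coincides with $W_1(\supscr{F}{true}_{u(\mathbf x,t)},\widehat F_{u(\mathbf x,t)}^N)$ through \eqref{eq:W1def}. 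On $A$ the input discrepancies satisfy $\omega_0(\mathbf x)=W_1(\widehat F_{u_0(\mathbf x)}^N,\supscr{F}{true}_{u_0(\mathbf x)})\le w_0(\mathbf x)$ and $\omega_b(\mathbf x,t)\le w_b(\mathbf x,t)$, so \cref{cor:W1:evolution} — this is where linearity of \eqref{eq:tran}, hence validity of \eqref{eq:W1_PDE}, enters — yields
\begin{align*}
W_1\big(\supscr{F}{true}_{u(\mathbf x,t)},\widehat F_{u(\mathbf x,t)}^N\big)=\omega_1(\mathbf x,t)\le w(\mathbf x,t),\qquad\forall(\mathbf x,t)\in\Omega\times\RgeO.
\end{align*}
Hence on $A$ one has $\supscr{F}{true}_{u(\mathbf x,t)}\in\P_{\mathbf x,t}$ for every $(\mathbf x,t)$, and since $\bP(A)\ge1-\beta$ the claim follows; the measurability bookkeeping (that $A$ is an event and that the inclusion holds on all of $A$) is identical to that in the proof of \cref{thm:ambiguity:evolution}.

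The step I expect to need the most care is the interface between the probabilistic objects and the PDE solutions used in the second step above: one must be certain that solving \eqref{eq:MD} with the empirical input CDFs reproduces the empirical CDF $\widehat F_{u(\mathbf x,t)}^N$ of the propagated samples $u(\mathbf x,t;\mathbf a^i)$, and likewise that solving with the true input CDFs reproduces $\supscr{F}{true}_{u(\mathbf x,t)}$, and that both outputs are bona fide CDFs (nondecreasing, right-continuous, with limits $0$ and $1$), so that $\omega_1$ in \cref{th:WR:evol,cor:W1:evolution} is literally the $1$-Wasserstein distance via \eqref{eq:W1def}. This is exactly the content of \cref{th:CDF:pushforward} together with the smoothness of solutions guaranteed by \cref{assumption:unif:existence}; for linear dynamics it also permits taking $T=\infty$, consistently with the range $t\in\RgeO$ appearing in the statement.
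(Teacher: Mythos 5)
Your proposal is correct and follows essentially the same route as the paper's proof: condition on a single high-probability event in the parameter-sample space, transfer the Wasserstein bound to the inputs via the Lipschitz parameterization, and propagate with \cref{cor:W1:evolution}. The only cosmetic difference is that you define the event $A$ directly by the parameter-space Wasserstein ball and re-derive the input bounds on it, whereas the paper defines $A$ via membership in the input ambiguity sets of \cref{prop:init:bnd:ambiguity:sets} and cites \eqref{guarantee:init:bnd}; these are equivalent.
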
 
\begin{proof}
Let
$A:=\{\mathbf (\mathbf a^1,\ldots,\mathbf a^N)\in\Rat{Nn}\,|\,  \supscr{F}{true}_{u_0(\mathbf x)}\in \P_{\mathbf x}^0(\mathbf a^1,\ldots,\mathbf a^N)\;\forall \mathbf x\in\Omega \;\land\;\supscr{F}{true}_{u_b(\mathbf x,t)}\in \P_{\mathbf x,t}^b(\mathbf 
a^1,\ldots,\mathbf a^N)\;\forall (\mathbf x,t)\in\Gamma\times\RgeO\}$,
with $\P_{\mathbf x}^0$ and $\P_{\mathbf x,t}^b$ as given in \cref{prop:init:bnd:ambiguity:sets}. Then, we have from \eqref{guarantee:init:bnd} that \eqref{event:A:probability} holds. Next, let $(\mathbf a^1,\ldots,\mathbf a^N)\in A$ and $\widehat F_{u_0(\mathbf x)}^N\equiv\widehat F_{u_0(\mathbf x)}^N(\mathbf a^1,\ldots,\mathbf a^N)$, $\mathbf x\in\Omega$,  $\widehat F_{u_b(\mathbf x,t)}^N\equiv\widehat F_{u_b(\mathbf x,t)}^N(\mathbf a^1,\ldots,\mathbf a^N)$, $(\mathbf x,t)\in\Gamma\times\RgeO$ be the associated input CDFs. From the definition of $\P_{\mathbf x}^0$, $\P_{\mathbf x,t}^b$ and $w_0$, $w_b$ we get 
\begin{align*}
W_1\big(\widehat F_{u_0(\mathbf x)}^N,\supscr{F}{true}_{u_0(\mathbf x)}\big) & \le w_0(\mathbf x) \quad\forall \mathbf x\in\Omega\\
W_1\big(\widehat F_{u_b(\mathbf x,t)}^N,\supscr{F}{true}_{u_b(\mathbf x,t)}\big) & \le w_b(\mathbf x,t) \quad\forall (\mathbf x,t)\in\Gamma\times\RgeO.
\end{align*}
Thus, applying \cref{cor:W1:evolution} with $F^1\equiv 
\widehat F_u^N$ and $F^2\equiv\supscr{F}{true}_u$,
$W_1\big(\widehat F_{u(\mathbf x,t)}^N,\supscr{F}{true}_{u(\mathbf x,t)}\big) \le w(\mathbf x,t)$, for all $(\mathbf x,t)\in\Omega\times\RgeO$,
and it follows from the definition of $\P_{\mathbf x,t}$ that
\begin{align*}
\supscr{F}{true}_{u(\mathbf x,t)}\in \P_{\mathbf x,t}(\mathbf 
a^1,\ldots,\mathbf a^N)\quad\forall (\mathbf x,t)\in\Omega\times\RgeO.
\end{align*}  
Combining this with \eqref{event:A:probability} for $A$ as given in this proof yields the result.
\end{proof}

\section{Numerical example} \label{sec:numerical:examples} 

In this section, we illustrate the use of the ambiguity propagation tools developed above in a numerical example. 
We consider a one-dimensional version of \eqref{eq:tran} with linear 
\begin{align}\label{eq:tranexample}
q(u) = u, \quad \text{and} \quad r(u; \theta_r) = \theta_r u, \quad \theta_r \in \mathbb R,
\end{align}
defined in $\Omega = \mathbb R_{\ge 0}$ and subject to the following initial and boundary conditions 
\begin{align}\label{eq:exampleinputs}
u(x,0) & = u_0 = a_1 + a_2,\quad x\ge 0 \notag \\
u(0,t) & = u_b(t) = a_1+ a_2 \left( 1 +a_3 \sin(2 \pi t) \right) ,\quad t\ge 0 
\end{align}
%
(note that this fulfills the most restrictive conditions of \cref{th:WR:evol}).
Because of \eqref{eq:exampleinputs}, in the following we drop the dependence of the input and boundary conditions from $x$. Randomness is introduced by the finite set of $(n=3)$ i.i.d. uncertain parameters $\mathbf a=(a_1,a_2,a_3)$, which vary in $[0,1]^n$; according to \eqref{rho:a}, $\rho_{\mathbf a} = 1/2$. We choose a uniform distribution to be the data-generating distribution for~$\mathbf a$.  
Both $u_0$ and $u_b(t)$ are random non-negative variables which are defined on the compact supports $\left[ 0,2 \right]$ and $\left[ 0, 2 + \max \left( 0, \sin \left( 2 \pi t \right) \right) \right]$, respectively. 

\subsection{Shape and size of the input ambiguity sets}

We consider data-driven 1-Wasserstein ambiguity sets for the parameters $\mathbf a$, which are constructed according to \cref{lemma:ambiguity:radius} using $p=1$ and $n=3$. We choose the radius $\epsilon_N(\beta,\rho_{\mathbf a})$ in \eqref{epsN:dfn} for a given sample size $N$ and a fixed $\beta$. 
%
%
Threshold radii for different size of the sample $N$ and identical confidence level $1-\beta$ can be constructed in relative terms, as exemplified  in~\cite{DB-JC-SM:19}. By adjusting 
$\epsilon_N(\beta,\rho_{\mathbf a})$, the decision-maker determines the level of conservativeness of the ambiguity set, and the distributional robustness as a consequence.
The ambiguity sets for the parameters are scaled into pointwise ambiguity sets for the inputs following \cref{prop:init:bnd:ambiguity:sets}, via the definition of the Lipschitz constants 
\begin{align}\label{eq:rhos}
& \rho_0 = L_0 \epsilon_N(\beta,\rho_{\mathbf a}), & \text{with} \; & L_0: = \sqrt{2}, \notag \\ 
& \rho_b(t) = L_b(t) \epsilon_N(\beta,\rho_{\mathbf a}), & \text{with} \; & L_b(t) := \sqrt{2+ 2 \sin^2(2 \pi t) + 2 \max(0,\sin(2 \pi t))}. 
\end{align}
Second, we construct conservative ambiguity envelopes for the initial and the boundary conditions characterized by a 1-Wasserstein discrepancy larger than $\rho_0$ and $\rho_b(t)$, respectively, according to \cref{prop:envelopeconstruction}. These upper and lower envelopes define an ambiguity band which enjoys the same performance guarantees as the previously defined 1-Wasserstein ambiguity sets. We denote with $\rho_0^{\Env} \ge \rho_0$ and $\rho_b^{\Env}(t) \ge \rho_b(t)$ the 1-Wasserstein discrepancy between the upper and lower distributions defining the initial and boundary ambiguity bands, respectively.

For both inputs, the maximum pointwise Wasserstein distance $\rho_{0,\max}$ and $\rho_{b,\max}(t)$ corresponds to the local size of the support. 1-Wasserstein discrepancies larger than the maximum value denote uniformative ambiguity sets.
For the chosen scenario, $\rho_{0,\max} = 2$ and $\rho_{b,\max}(t) = 2+\max(0,\sin(2\pi t))$ for the initial and the boundary values, respectively.
A comparison of $\rho_b(t)$, $\rho_b^{\Env}(t)$ and $\rho_{b,\max}(t)$ is presented in \cref{fig:W1b} for different sample sizes $N$ and identical confidence level $1-\beta$. 
The corresponding values for the initial condition can be read in the same figure at $t=0$ because of the imposed continuity between initial and boundary conditions at $t=0$. 
Regardless of the chosen shape of the ambiguity set, larger $N$ 
determines smaller ambiguity sets characterized by smaller 1-Wasserstein discrepancies. 
By construction, 1-Wasserstein ambiguity sets defined through~\eqref{eq:rhos} are sharper than the corresponding ambiguity bands drawn geometrically via \cref{prop:envelopeconstruction} at all times. The temporal behavior of $\rho_b(t)$ is determined by the Lipschitz scaling function $L_b(t)$ in~\eqref{eq:rhos}; in this case it is periodic and bounded.
\begin{figure}[tbh]
	\centering
	\includegraphics[width=.75\textwidth]{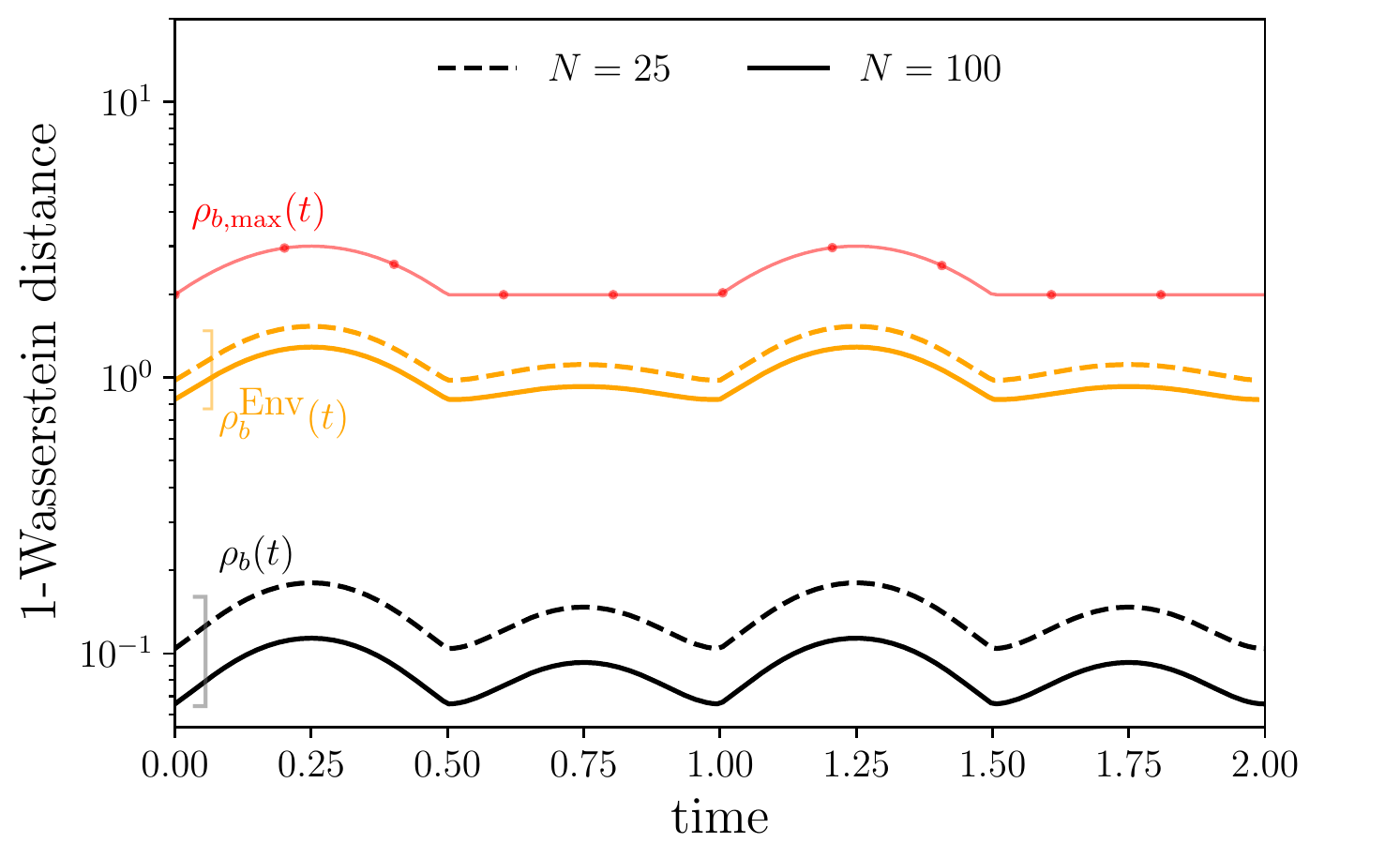}
	\caption{{Characteristic 1-Wasserstein distances for the pointwise ambiguity sets for $u_b(0,t)$. Black lines correspond to the $\rho_b(t)$ bounds set in \cref{corollary:CDF:ambig:sets} and used to define 1-Wasserstein ambiguity sets. Yellow lines indicate $\rho_{b}^{\text{Env}}(t)$, the sample-dependent 1-Wasserstein discrepancy between envelopes defined via the \cref{prop:envelopeconstruction} procedure. The line pattern indicates the size of the data sample $N$, as listed in the legend. The maximum theoretical 1-Wasserstein discrepancy for $u_b(0,t)$, $\rho_{b,\max}(t)$, is also drawn (red circles).} 
	}\label{fig:W1b}
	\vspace*{-2ex}
\end{figure}
Figures~\ref{fig:W10_env} and~\ref{fig:W1b_env} show the corresponding ambiguity bands for $u_0$ and $u_b(t)$ at a given time $t$, respectively, for the same values of sample size $N$ and identical confidence level $1-\beta$. 
Both upper and lower envelopes are data-driven, i.e., they depend on the empirical distribution of a specific sample. We also show  the 1-Wasserstein discrepancy between the upper and lower envelopes. 

\begin{figure}\label{fig:W10_env}
	\centering
	\includegraphics[width=.75\textwidth]{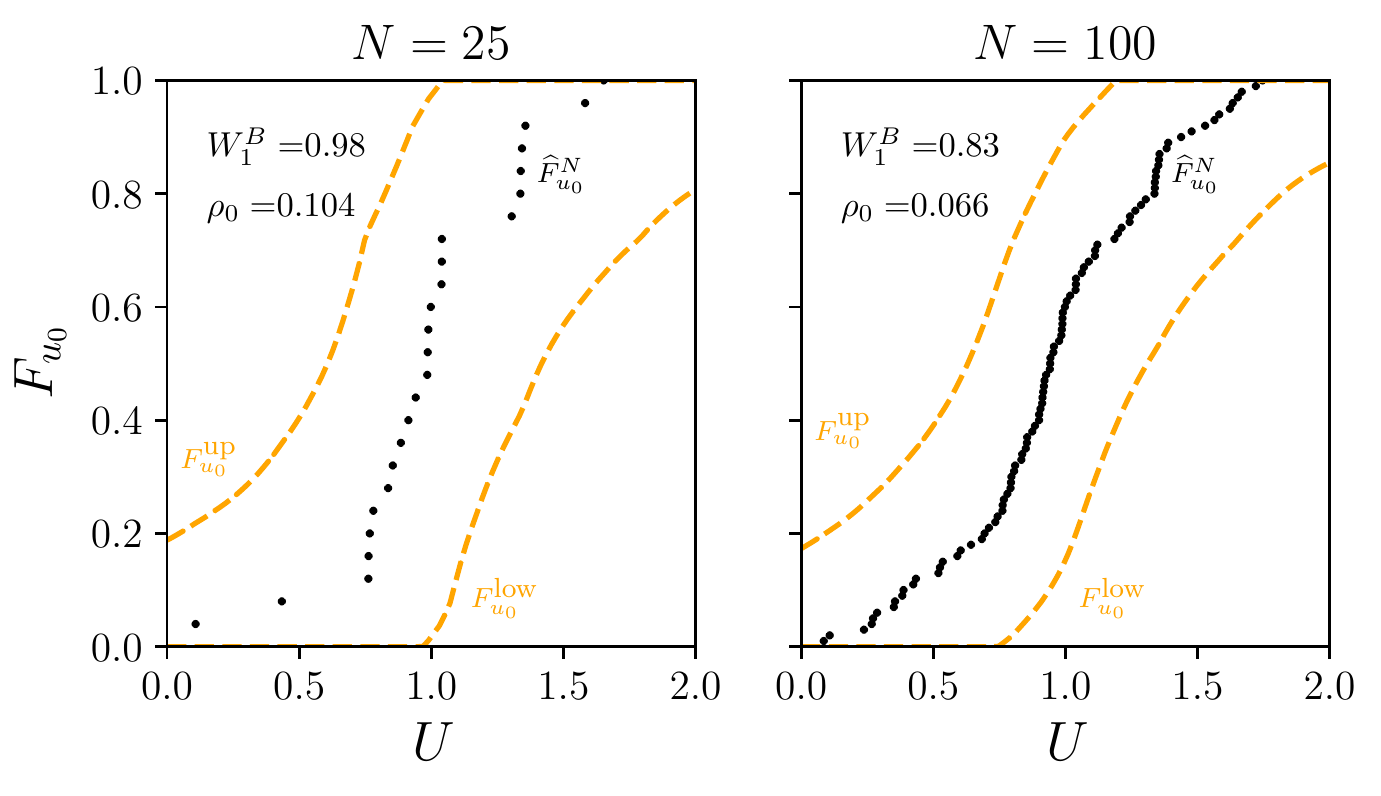}
	\caption{Ambiguity band for the distributions of $u_0$ for different sample size $N$ and identical confidence level $1-\beta$. We use  $\theta_r=-1$. Scatter points represent the empirical distribution $\widehat F^N_{u_0}$. Dashed yellow lines represent the conservative envelopes (with respect to a minimum 1-Wasserstein distance $\rho_0$) constructed according to \cref{prop:envelopeconstruction}. 
	The 1-Wasserstein discrepancies for the ambiguity band - computed between the upper and the lower envelope - are reported in the corresponding panels, also indicating $\rho_0$. }
\end{figure}

\begin{figure}\label{fig:W1b_env}
	\centering
	\includegraphics[width=.75\textwidth]{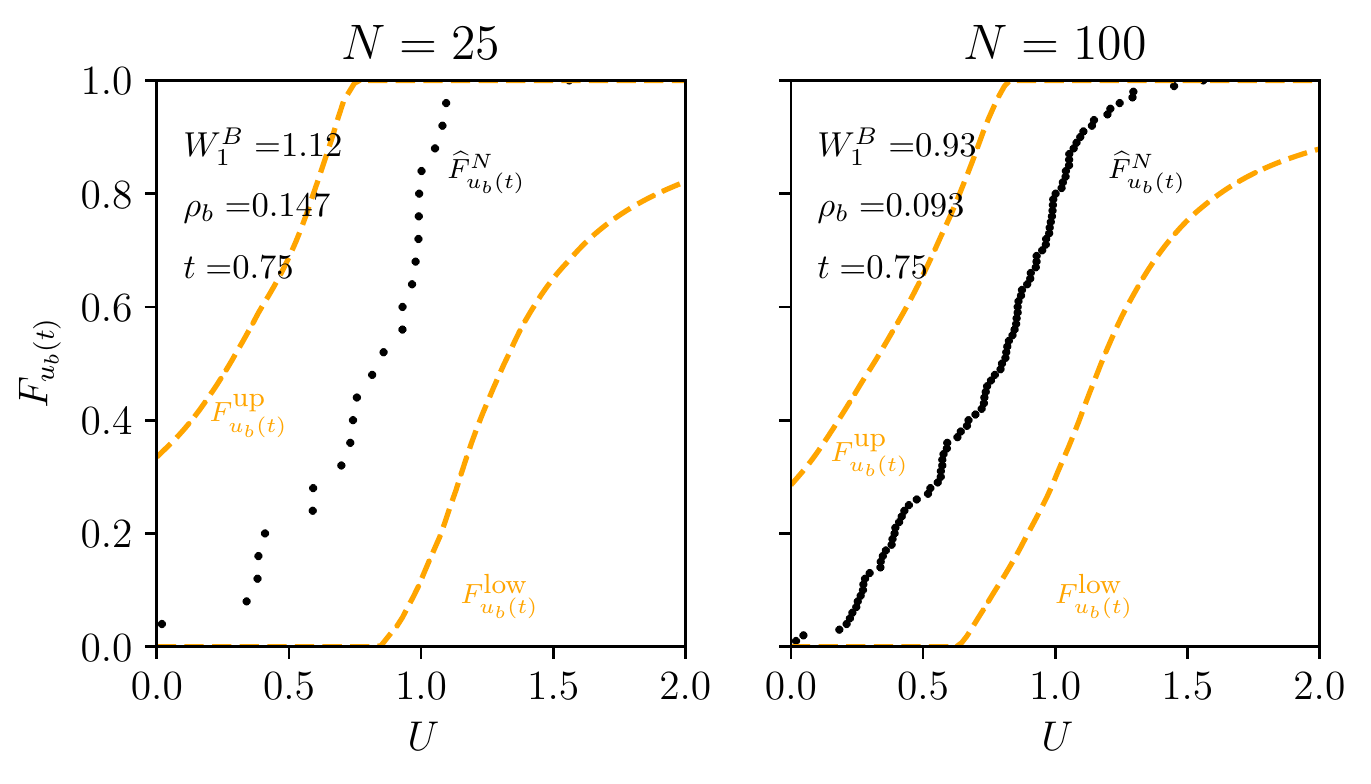}
	\caption{Ambiguity band for the distributions of $u_b(t)$ at $t=0.75$ for different sample size $N$ and identical confidence level $1-\beta$. We use $\theta_r=-1$. Scatter points represent the empirical distribution $\widehat F^N_{u_b(t)}$. Dashed yellow lines represent the conservative envelopes (with respect to a minimum 1-Wasserstein distance $\rho_b(t)$) constructed according to \cref{prop:envelopeconstruction}. The 1-Wasserstein discrepancies for the ambiguity band are reported in the corresponding panels, also indicating $\rho_b(t)$.}
	\vspace*{-2ex}
\end{figure}

\subsection{Propagation of the ambiguity set}

Pointwise 1-Wasserstein distances for the inputs can be propagated in space and in time to describe the behavior of the ambiguous distributions using \eqref{eq:W1_PDE}, under the assumption of linear dynamics. Solving \eqref{eq:W1_PDE} yields a quantitative measure of the stretch/shrink of the ambiguity ball in each space-time location. True (unknown) distributions as well as their empirical approximations describing the given physical dynamics evolve according to \eqref{eq:MD}; the latter provide an anchor for the pointwise ambiguity balls in $(\mathbf x,t)$. In \cref{fig:W1_WAS} we present the solution of \eqref{eq:W1_PDE}, $w_1(x,t)$, solved using $\rho_0$ and $\rho_b(t)$ as defined in \eqref{eq:rhos} as initial and boundary conditions, respectively. The ambiguity ball shrinks with respect to the input conditions as an effect of a depletion dynamics imposed by \eqref{eq:tran} with the given choice of $\theta_r=-1$. As expected, the smaller the sample size $N$, the larger the radius of the ambiguity ball as quantified by $w_1(x,t)$.  

\begin{figure}[htb]\label{fig:W1_WAS}
	\centering
	\includegraphics[width=.75\textwidth]{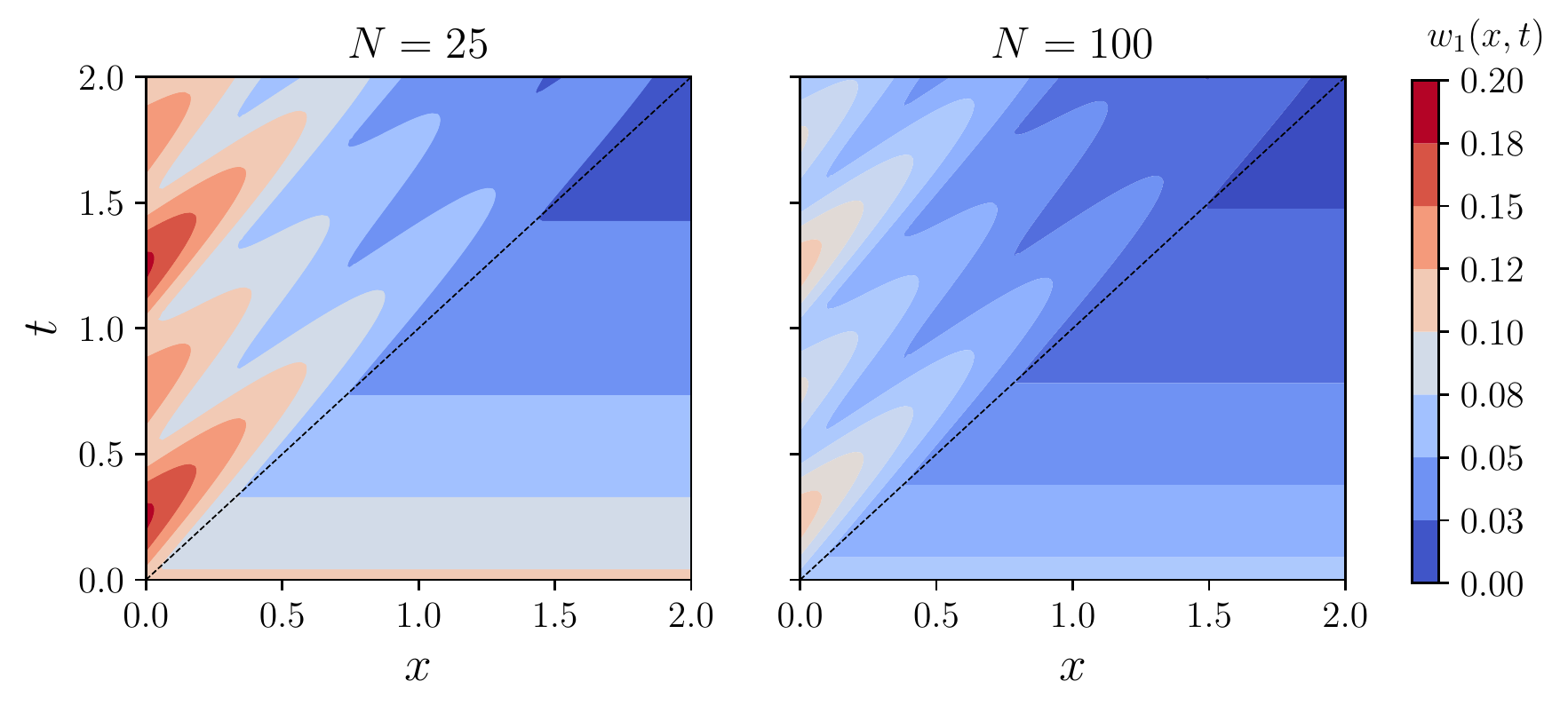}
	\caption{$w_1(x,t)$ as a solution of \eqref{eq:W1_PDE} with $w_0(x) = \rho_0$ and $w_b(x,t) = \rho_b(t)$ for different sample size $N$ ($N=25$ in the left panel, and $N=100$ in the right panel)  and identical confidence level $1-\beta$. The dotted line represents the domain partition between regions where information originates from either the initial or the boundary condition. We use $\theta_r=-1$.}
\end{figure}

The dynamic evolution of ambiguity bands is determined by the evolution of the upper and lower envelopes for the input samples, cf. \cref{prop:envelopeconstruction}, for given sample size $N$ and confidence level $1-\beta$. The envelopes evolve according to \eqref{eq:MD}, thus requiring no linearity assumption for \eqref{eq:tran}. {As such, ambiguity bands, while possibly being more conservative than 1-Wasserstein ambiguity sets in terms of size, can be evolved for a wider class of hyperbolic equations.} Ambiguity bands are equipped with 1-Wasserstein measures, as the 1-Wasserstein distance between the upper and the lower envelope represents the maximum distance between any pair of distributions within the band, and it is constructed to be always larger or equal than the local radius of the corresponding ambiguity ball. Confidence guarantees established for the inputs (\cref{corollary:CDF:ambig:sets}) withstand propagation, as demonstrated in \cref{thm:ambiguity:evolution}. 

For a given choice of $N$, 
we compare the propagation of 1-Wasserstein ambiguity sets with input conditions defined by \eqref{eq:rhos} to the data-driven dynamic ambiguity bands constructed via \cref{prop:envelopeconstruction} and subject to the input envelopes represented in Figures~\ref{fig:W10_env} and~\ref{fig:W1b_env}. The corresponding $w_1$ maps are shown in \cref{fig:W1_2locs} (top row). In both cases, the pointwise 1-Wasserstein distance undergoes the same dynamics established by \eqref{eq:W1_PDE}, but subject to different inputs (represented in \cref{fig:W1b}). In each spatial location, it is possible to track the temporal behavior of the ambiguity set size for both shapes, as shown for two representative locations in \cref{fig:W1_2locs} (bottom row). The size of both ambiguity sets decreases from the maximum imposed at the initial time for $t<x$, and reflects the temporal signature of the boundary, dampened as an effect of depletion dynamics introduced by \eqref{eq:tranexample} with $\theta_r=-1$, for $t>x$.

\begin{figure}[htb]
	\centering
	\includegraphics[width=.85\linewidth]{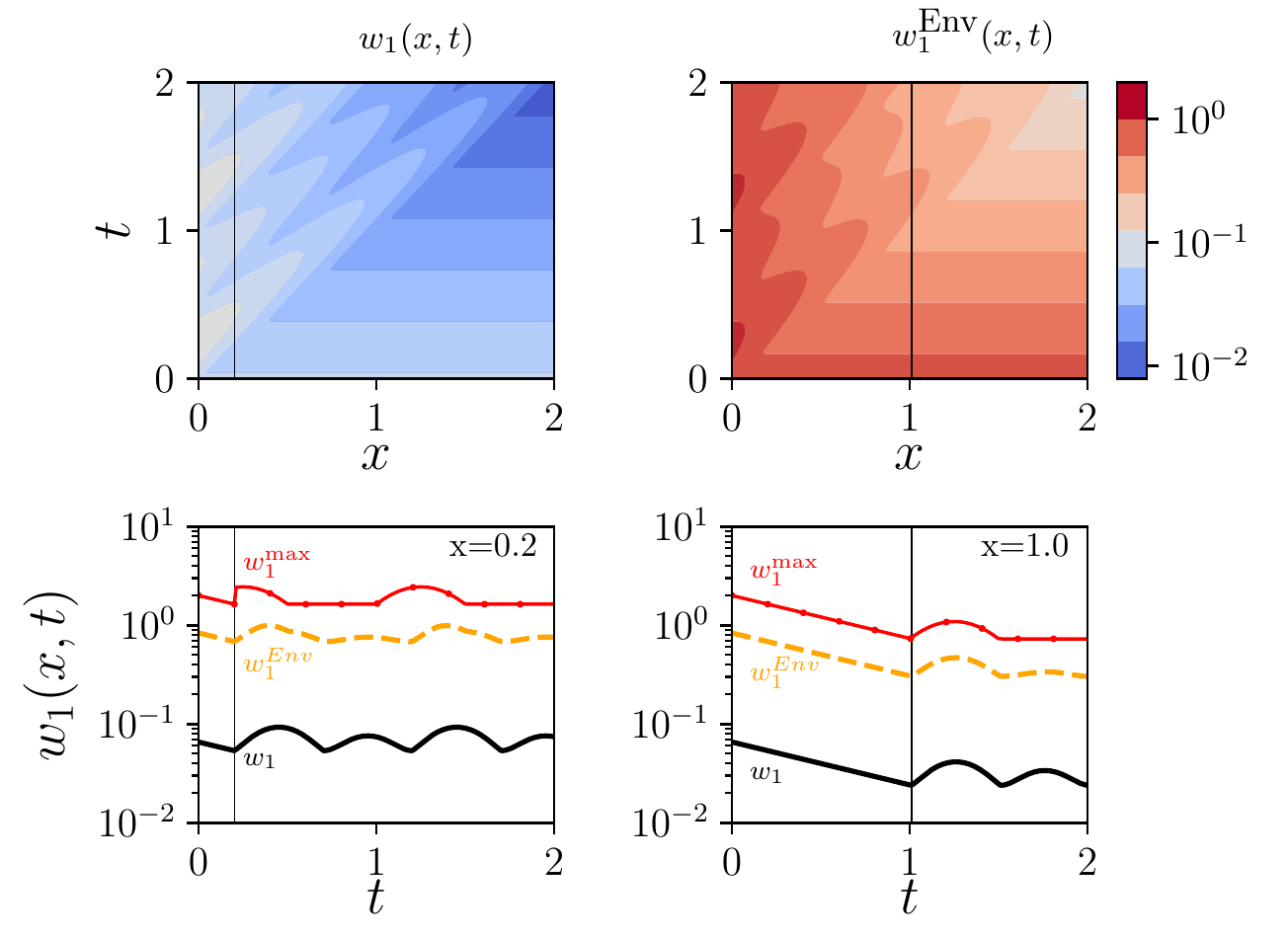}
	\caption{{Top row: 1-Wasserstein distance maps for the radius of the ambiguity balls $w_1(x,t)$ with input radii \eqref{eq:rhos} (left), and the ambiguity band $w_1^{\Env} (x,t)$ (right), where $w_1^{\Env} (x,t) = W_1(F_{u(x,t)}^{\text{low}},F_{u(x,t)}^{\text{up}})$. Bottom row: 1-Wasserstein distance profiles at given locations $x=\{0.2,1.\}$. The black solid line reflects the 1-Wasserstein ambiguity radius $w_1(x,t)$, whereas the yellow dashed line represents the 1-Wasserstein distance of the ambiguity band, $w_1^{\Env}(x,t)$. The maximum theoretical 1-Wasserstein discrepancy is also drawn, $w_1^{\max}(x,t)$ (marked red line). The location of the cross-sections is indicated in the top-row contour plots in the corresponding column ($x=0.2$ and $x=1$, respectively), whereas the demarcation line $t=x$ is indicated in the bottom panels. Parameters are set to: $N=100$, $\theta_r=-1$.}}\label{fig:W1_2locs}
	\vspace*{-2ex}
\end{figure}


\section{Conclusions}

We have provided computational tools in the form of PDEs for the space-time propagation of 
pointwise ambiguity sets for random variables obeying hyperbolic conservation laws. 
The initial and boundary conditions of these propagation PDEs depend on the data-driven characterization of the ambiguity sets at the initial time and along the physical boundaries of the spatial domain. We have introduced both 1-Wasserstein ambiguity balls and ambiguity bands, formed through upper and lower CDF envelopes containing all distributions with an assigned 1-Wasserstein distance from their empirical CDFs.  
The former are propagated by evolving the ambiguity radius according to a dynamic law that can be derived exactly in the case of linear physical models. The latter are propagated by solving the CDF equation
for both the upper and the lower CDF envelope defining the ambiguity band. 
In this second case, both linear and non linear physical processes can be described exactly in CDF terms, provided that no shock develops in the physical model solution. 
The performance guarantees for the input ambiguity sets of both types are demonstrated to withstand propagation through the physical dynamics. These computational tools allow the modeler to map the physics-driven stretch/ shrink of the ambiguity sets size, enabling dynamic evaluations of distributional robustness.
Future research will consider systems of conservation laws with joint one-point CDFs, the characterization of ambiguity sets when shocks are formed under nonlinear dynamics, the assimilation of data collected within the space-time domain, the application of these results in distributionally robust optimization problems, and sharper concentration-of-measure results to reduce conservativeness of the ambiguity sets for small numbers of samples.  

\appendix

\section{Technical proofs from Section~\ref{sec:init:guarantees}} \label{sec:app:to:sec:init:guarantees}
We collect here basic properties of generalized CDF inverses used in the following: 

\noindent \textbf{(GI1)} $F(t)< y\Rightarrow t<F^{-1}(y)$;

\noindent \textbf{(GI2)} $F(t_1)\le y\le F(t_2)\Rightarrow t_1\le F^{-1}(y)\le t_2$;

\noindent \textbf{(GI3)} $t<F^{-1}(y)\Rightarrow F(t)<y$;

\noindent \textbf{(GI4)} $F(t)=F(t_1)\;\forall t\in[t_1,t_2)\land F(t_1)<y\le F(t_2)\Rightarrow F^{-1}(y)=t_2$.

\begin{proof}[Proof of Lemma~\ref{lemma:Lipschitz:map}]
Let $\widehat T:\Rat{n}\times\Rat{n}\to\Rat{m}\times\Rat{m}$ with $\widehat  
T(x,y)=(T(x),T(y))$, 
consider an optimal coupling $\pi$ for which the infimum in the definition of the distance 
$W_p(\mu,\nu)$ is attained, and define $\widehat{\pi}:=\widehat T_{\#}\pi=\pi\circ \widehat 
T^{-1}$. Then, it follows that $
\widehat{\pi}(A\times\Rat{m})=(\pi\circ\widehat 
T^{-1})(A\times\Rat{m})=\pi(T^{-1}(A)\times 
T^{-1}(\Rat{m}))=\mu(T^{-1}(A))=T_{\#}\mu(A)$.
Hence, $T_{\#}\mu$ is a marginal of $\widehat{\pi}$ and similarly $T_{\#}\nu$, 
i.e., $\widehat{\pi}$ is a coupling between $T_{\#}\mu$ and $T_{\#}\nu$.  
Let $\phi:\Rat{m}\times\Rat{m}\to\Rat{}$ with $\phi(x,y)=\|x-y\|^p$ and $\widehat T$ 
as given above. Then, we obtain from the change of variables formula and the Lipschitz hypothesis that 
\begin{align*}
({\rm LHS})
& =\int_{\Rat{m}\times\Rat{m}}\|\widehat x-\widehat 
y\|^p\widehat{\pi}(d\widehat x,d\widehat y)= 
\int_{\Rat{m}\times\Rat{m}}\phi(\widehat x,\widehat y)\widehat{\pi}(d\widehat
x,d\widehat y) \\
& = \int_{\Rat{n}\times\Rat{n}}\phi\circ\widehat 
T(x,y)\pi(dx,dy)=\int_{\Rat{n}\times\Rat{n}}\phi(T(x),T(y))\pi(dx,dy) \\  
& = \int_{\Rat{n}\times\Rat{n}}\|T(x)-T(y)\|^p\pi(dx,dy)\le 
\int_{\Rat{n}\times\Rat{n}}L^p\|x-y\|^p\pi(dx,dy)=({\rm RHS}).
\end{align*} 
Thus, we get $W_p^p(T_{\#}\mu,T_{\#}\nu)\le ({\rm LHS})\le ({\rm RHS})=L^p W_p^p(\mu,\nu)$, implying the result.
\end{proof}

\begin{proof}[Proof of \cref{lemma:CDF:envelope}]
We show that $\supscr{\F}{up}_\rho[F]$ is continuous and increasing, and hence, it is 
also a CDF, as it takes values in $[0,1]$ (the proof for 
$\supscr{\F}{low}_\rho[F]$ is analogous). Notice first that due to (GI1), i.e., that $F(t)<y\Rightarrow t<F^{-1}(y)$, the mapping $z\mapsto \int_{F(t)}^z(F^{-1}(y)-t)dy$ is strictly increasing for $z\in [F(t),1]$. Combining this fact with continuity of $z\mapsto \int_{F(t)}^z(F^{-1}(y)-t)dy$, we deduce existence of a unique $z\in [F(t),1]$ so that $\supscr{\F}{up}_\rho[F](t)=z$ and $\int_{F(t)}^z(F^{-1}(y)-t)dy=\rho$ for all $t\in [a,\supscr{t}{up}_\rho[F])$. To show that $\supscr{\F}{up}_\rho[F]$ is increasing, let $a\le t_1<t_2<\supscr{t}{up}_\rho[F]$ with $\supscr{\F}{up}_\rho[F](t_1)=z_1$ and $\supscr{\F}{up}_\rho[F](t_2)=z_2$ and assume w.l.o.g. that $F(t_2)<z_1$. Then, we have that 
\begin{align*}
\rho=\int_{F(t_1)}^{z_1}(F^{-1}(y)-t_1)dy\ge\int_{F(t_2)}^{z_1}(F^{-1}(y)-t_1)dy
>\int_{F(t_2)}^{z_1}(F^{-1}(y)-t_2)dy,
\end{align*}
where we exploited that $F$ is increasing in the first inequality. Thus,  we 
get that $z_2>z_1$, because also $\int_{F(t_2)}^{z_2}(F^{-1}(y)-t_2)dy=\rho$. 
To prove continuity, let $t_\nu\to t\in [a,\supscr{t}{up}_\rho[F])$ and 
$\{z_\nu\}_{\nu\in\bN}$ with $\supscr{\F}{up}_\rho[F](t_\nu)=z_\nu$. Then, we 
have that
\begin{align*}
\int_{F(t_\nu)}^{z_\nu}(F^{-1}(y)-t_\nu)dy
= &\int_{F(t)}^{z_\nu}(F^{-1}(y)-t)dy \\
& +\int_{F(t_\nu)}^{F(t)}(F^{-1}(y)-t)dy
+\int_{F(t_\nu)}^{z_\nu}(t-t_\nu)dy,
\end{align*}  
or equivalently, $\int_{F(t)}^{z_\nu}(F^{-1}(y)-t)dy= \rho
-\int_{F(t_\nu)}^{F(t)}(F^{-1}(y)-t)dy
-\int_{F(t_\nu)}^{z_\nu}(t-t_\nu)dy$.
Since $0\le F(t_\nu)<z_\nu\le 1$, and $t_\nu\to t$ we get that 
$\int_{F(t_\nu)}^{z_\nu}(t-t_\nu)dy\to 0$. For the other term, we have w.l.o.g. 
that $F(t_\nu)\le y\le F(t)$. It then follows from (GI2) that $t_\nu\le F^{-1}(y)\le t$ and therefore 
$
\big|\int_{F(t_\nu)}^{F(t)}(F^{-1}(y)-t)dy\big|
\le\int_{F(t_\nu)}^{F(t)}|t_\nu-t|dy \to 0$.
Thus,
\begin{align} \label{envelope:continuity:argument}
\int_{F(t)}^{z_\nu}(F^{-1}(y)-t)dy\to \rho=\int_{F(t)}^z(F^{-1}(y)-t)dy
\end{align}
for a unique $z\in [F(t),1]$. Since $z'\mapsto \int_{F(t)}^{z'}(F^{-1}(y)-t)dy$ 
is strictly increasing (near $z$) and continuous, its inverse is well defined and continuous (see e.g., \cite[Theorem 5, Page 168]{VAZ:00}). Thus, we get from \eqref{envelope:continuity:argument} that  $z_\nu\to z$, establishing continuity of~$\supscr{\F}{up}_\rho[F]$.  

Next, let $F'\in\CD([a,b])$ with $W_1(F,F')\le\rho$. Equivalently, $\int_a^b|F'(t)-F(t)|dt\le\rho$. We show \eqref{CDF:sandwich} by contradiction. Assume  w.l.o.g. that the upper bound in \eqref{CDF:sandwich} is violated, and there exists $t^*$ with  $F'(t^*)>\supscr{\F}{up}_\rho[F](t^*)$. Then necessarily $t^*\in[a,\supscr{t}{up}_\rho[F])$, and since $F'(t^*)>F(t^*)$, (GI1) implies that $F^{-1}(F'(t^*))>t^*$. Hence, $[t^*,F^{-1}(F'(t^*)))$ is nonempty and we get from (GI3) that $F'(t)\ge F(t)$ for all $t\in[t^*,F^{-1}(F'(t^*)))$.  Consequently, we obtain  
\begin{align*}  
\rho & \ge \int_a^b|F'(t)-F(t)|dt  \ge\int_{t^*}^{F^{-1}(F'(t^*))}|F'(t)-F(t)|dt \\
& =\int_{t^*}^{F^{-1}(F'(t^*))}(F'(t)-F(t))dt \ge \int_{t^*}^{F^{-1}(F'(t^*))}(F'(t^*)-F(t))dt \\
& =\int_{F(t^*)}^{F'(t^*)}(F^{-1}(y)-t^*)dy 
>\int_{F(t^*)}^{\supscr{\F}{up}_\rho[F](t^*)}(F^{-1}(y)-t^*)dy=\rho, 
\end{align*}
which is a contradiction. 
\end{proof}

\begin{proof}[Proof of \cref{prop:envelopeconstruction}]
We break the proof into several steps. 

\textit{Step 1: all indices $j_k$ and $i_k$ are well defined and satisfy~\eqref{index:ordering}}. We need to establish that the $\min$ and $\max$ operations for the definitions of these indices are not taken over the empty set. To show this for all $k\in[1:k_{\max}]$, we verify the following Induction Hypothesis (IH):
\begin{align*}
\textup{(IH) \quad For each}\; k\in[1:k_{\max}],\; j_k,\;i_k\;\textup{are well defined},\;j_k<i_k,\;{\rm and}\;b_{i_k,j_k}\ge\rho. 
\end{align*}
All properties of (IH) can be directly checked for $k=1$ by the definition of $j_1$ and $i_1$, and the assumption $b_{N,0}>\rho$. For the general case, let $k\le k_{\max}-1$ and assume that (IH) is fulfilled. Then, $j_{k+1}$ is well defined because $b_{i_k,j_k}\ge\rho$ by (IH). To show this also for $i_{k+1}$ we first establish that $i_k<N$. Indeed, assume on the contrary that $i_k=N$. Then, from the definition of $j_{k+1}$ we have that $b_{i_k,j_{k+1}}<\rho$ and we get from the definition of $k_{\max}$ that $k\ge k_{\max}$, which is a contradiction. Since $i_k<N$, $[i_{k+1}:N]$ is nonempty. Combining this with the fact that $b_{N,j_{k+1}}>\rho$, which follows from the definition of $k_{\max}$ and our assumption $k<k_{\max}$, we deduce that the minimum in the definition of $i_{k+1}$ is taken over a non-empty set. Hence, $i_{k+1}$ is well defined. In addition, we get from the definitions of $j_{k+1}$ and $i_{k+1}$ that $j_{k+1}<i_{k+1}$ and from the definition of $i_{k+1}$ that $b_{i_{k+1},j_{k+1}}\ge\rho$. Thus, we have shown (IH). Finally,  $j_{k_{\max}+1}$ is also well defined because  $b_{i_{k_{\max}},j_{k_{\max}}}\ge\rho$ by (IH). Having established that $j_k$ and $i_k$ are well defined for all $k\in[1:k_{\max}+1]$, \eqref{index:ordering} follows directly from their expressions. 

\textit{Step 2: establishing~\eqref{all:times:ordering}}. By the definition of $j_{k+1}$, we get 
\begin{align}\label{bvalues:property1}
b_{i_k,j_{k+1}} <\rho\quad\forall k\in[1:k_{\max}]. 
\end{align}
In addition, we have that 
\begin{align}  \label{bvalues:property2}
b_{i_{k+1}-1,j_{k+1}} & <\rho\quad\forall k\in[0:k_{\max}].
\end{align}
For $k=0$ this follows from the definition of $j_1$ and $i_1$. To show it also for $k\in[1:k_{\max}]$ we consider two cases. If $b_{i_k+1,j_{k+1}}\ge\rho$, then, by definition, $i_{k+1}=i_k+1$ and we get from \eqref{bvalues:property1} that $b_{i_{k+1}-1,j_{k+1}}=b_{i_k,j_{k+1}}<\rho$. In the other case where $b_{i_k+1,j_{k+1}}<\rho$, \eqref{bvalues:property2} follows directly from the  definition of $i_{k+1}$. Next, note that due to \eqref{index:ordering} and the fact that $i_{k_{\max}+1}=N+1$, the times $\tau_\ell$ are indeed defined for all $\ell\in[i_1:N]$. In addition, for each $k\in[1:k_{\max}]$ we get from \eqref{bvalues:property2} that $\rho-b_{\ell,j_{k+1}}>0$ for all $\ell\in[i_k:i_{k+1}-1]$. Hence, $\Delta t_\ell$ is positive and strictly decreasing with $\ell\in[i_k:i_{k+1}-1]$ and we have from the definition of the $\tau_\ell$'s that 
\begin{align} 
\tau_\ell & <\tau_{\ell'}\quad \forall k\in[1:k_{\max}],\ell,\ell'\in[i_k:i_{k+1}-1]\;{\rm with}\; \ell<\ell'  \label{tauis:ordering:prop1} \\
\tau_{i_{k+1}-1} & <t_{j_{k+1}}\quad\forall k\in[1:k_{\max}]. \label{tauis:ordering:prop2}
\end{align}
By the definition of $j_{k+1}$ we further obtain that 
\begin{align}  \label{bvalues:property3}
b_{i_k,j_{k+1}-1}\ge\rho\quad\forall k\in[1:k_{\max}]. 
\end{align}
From the latter and the definition of $\Delta t_{i_k}$, which implies that  $\Delta t_{i_k}\sum_{l=j_{k+1}}^{i_k}c_l+b_{i_k,j_{k+1}}=\rho$, we get that $b_{i_k,j_{k+1}-1}\ge \Delta t_{i_k}\sum_{l=j_{k+1}}^{i_k} c_l+b_{i_k,j_{k+1}}$, or equivalently, that 
\begin{align*}
& \sum_{l=j_{k+1}-1}^{i_k}(t_l-t_{j_{k+1}-1})c_l-\sum_{l=j_{k+1}}^{i_k}(t_l-t_{j_{k+1}})c_l \ge \Delta t_{i_k}\sum_{l=j_{k+1}}^{i_k} c_l \Leftrightarrow \\ 
& \sum_{l=j_{k+1}}^{i_k}(t_{j_{k+1}}-t_{j_{k+1}-1})c_l \ge \Delta t_{i_k}\sum_{l=j_{k+1}}^{i_k} c_l\Leftrightarrow t_{j_{k+1}}-t_{j_{k+1}-1}\ge \Delta t_{i_k}.
\end{align*}
Thus, we deduce from the definition of $\tau_\ell$ with $\ell\equiv i_k$ that $\tau_{i_k}\ge t_{j_{k+1}-1}$ for  $k\in[1:k_{\max}]$. Using this, and recalling that $\{t_\ell\}_{\ell=0}^N$ are strictly increasing, we get from \eqref{index:ordering}, \eqref{tauis:ordering:prop1}, and \eqref{tauis:ordering:prop2}, that $\{\tau_\ell\}_{\ell=j_1}^N$ are strictly increasing and \eqref{all:times:ordering} is satisfied.   

\textit{Step 3: verification of the formula for $\supscr{\widehat F}{up}$ for $t\in(-\infty,a)\cup[\tau_N,\infty)$}. For $t\in(-\infty,a)$, it follows directly from the definition of the upper CDF envelope. To establish it also when $t\in[\tau_N,\infty)$, it suffices again from the definition of the upper CDF envelope to show that $\tau_N=\supscr{t}{up}_\rho[\widehat F]$, with $\supscr{t}{up}_\rho$ given in the statement of \cref{lemma:CDF:envelope}. To show this, note that since by \eqref{all:times:ordering} $t_{j_{k_{\max}+1}-1}\le\tau_N<t_{j_{k_{\max}+1}}$, we have  
\begin{align*}
\int_{\tau_N}^b(1-\widehat F(t))dt & =\int_{\tau_N}^{t_N}(1-\widehat F(t))dt=\int_{\tau_N}^{t_{j_{k_{\max}+1}}}(1-\widehat F(t))dt \\
&\phantom{=}+\int_{t_{j_{k_{\max}+1}}}^{t_N}(1-\widehat F(t))dt = (t_{j_{k_{\max}+1}}-\tau_N)\sum_{l=j_{k_{\max}+1}}^Nc_l+b_{N,j_{k_{\max}+1}} ,
\end{align*}
which, in turn, equals $\Delta t_N\sum_{l=j_{k_{\max}+1}}^Nc_l+b_{N,j_{k_{\max}+1}}$.
Thus, we get from the definition of $\Delta t_N$ that 
$
\int_{\tau_N}^b(1-\widehat F(t))dt=\frac{\rho-b_{N,j_{k_{\max}+1}}}{\sum_{l=j_{k_{\max}+1}}^Nc_l}\sum_{l=j_{k_{\max}+1}}^Nc_l+b_{N,j_{k_{\max}+1}}=\rho$,
and hence
$\tau_N=\sup\{\tau\in[a,b]\,|\, \int_\tau^b(1-\widehat F(t))dt\ge\rho\}=\supscr{t}{up}_\rho[\widehat F]$.
It remains to verify the formula for $\supscr{\widehat F}{up}$ for all intermediate intervals, which are of the form $[\subscr{t}{beg},\subscr{t}{end})$. To each of these intervals we also associate a right time-instant $\subscr{t}{rt}$. For each $k\in[1:k_{\max}]$, $\subscr{t}{beg}$, $\subscr{t}{end}$, and $\subscr{t}{rt}$ are given by one of the following  cases. 

\textbf{Case 1)} $\subscr{t}{beg}=t_\ell$ and $\subscr{t}{end}=t_{\ell+1}$ with 
$\ell\in[j_k:j_{k+1}-2]$, and $\subscr{t}{rt}=t_{i_k}$; 

\textbf{Case 2)} $\subscr{t}{beg}=t_{j_{k+1}-1}$, 
$\subscr{t}{end}=\tau_{i_k}$, and $\subscr{t}{rt}=t_{i_k}$;

\textbf{Case 3)} $\subscr{t}{beg}=\tau_\ell$ and 
$\subscr{t}{end}=\tau_{\ell+1}$ with $\ell\in[i_k:i_{k+1}-2]$, and $\subscr{t}{rt}=t_{\ell+1}$; 

\textbf{Case 4)} $\subscr{t}{beg}=\tau_{i_{k+1}-1}$, $\subscr{t}{end}=t_{j_{k+1}}$, and $\subscr{t}{rt}=t_{i_{k+1}}$. 

\noindent One can readily check from the formula for $\supscr{\widehat F}{up}$ that 
these cases cover all intermediate intervals. To verify the formula for all $[\subscr{t}{beg},\subscr{t}{end})$ we will exploit the following fact:

\textbf{Fact I)} For each of the Cases~1)--4) and pair $(t,y)$ with $t\in(\subscr{t}{beg},\subscr{t}{end})$ and $y=\supscr{\widehat F}{up}(t)$, it holds that $\widehat F^{-1}(y)=\subscr{t}{rt}$.

\textit{Step 4: Proof of Fact I}. Recall that 
\begin{align} \label{Fup:reminder}
\supscr{\widehat F}{up}(t)=\sup\bigg\{z\in[\widehat F(t),1]\,\Big|\,\int_{\widehat F(t)}^z(\widehat F^{-1}(y)-t)dy\le\rho\bigg\}
\end{align}
and note that
\begin{align} 
\int_{\widehat F(t_j)}^{\widehat F(t_i)}(\widehat F^{-1}(y)-t_j)dy=b_{i,j}\quad\forall\; 0\le j\le i\le N \label{integral:equal:bij}.
\end{align}
%
We first consider Case~1). Let $t\in(t_\ell,t_{\ell+1})$ with $\ell\in[j_k:j_{k+1}-2]$. Then, we have from \eqref{index:ordering} and \eqref{integral:equal:bij} that  
\begin{align*}
\int_{\widehat F(t)}^{\widehat F(t_{i_k})}(\widehat F^{-1}(y)-t)dy & \ge\int_{\widehat F(t_{j_{k+1}-1})}^{\widehat F(t_{i_k})}(\widehat F^{-1}(y)-t_{j_{k+1}-1})dy=b_{i_k,j_{k+1}-1}\ge\rho,
\\
\int_{\widehat F(t)}^{\widehat F(t_{i_k-1})}(\widehat F^{-1}(y)-t)dy &\le\int_{\widehat F(t_{j_k})}^{\widehat F(t_{i_k-1})}(\widehat F^{-1}(y)-t_{j_k})dy=b_{i_k-1,j_k}<\rho,
\end{align*}
where we exploited \eqref{bvalues:property3} and \eqref{bvalues:property2} for each last inequality, respectively. Thus, it follows from \eqref{Fup:reminder} that $\widehat F(t_{i_k-1})<\supscr{\widehat F}{up}(t)\le \widehat F(t_{i_k})$, which implies by (GI4) that $\widehat F^{-1}(\supscr{\widehat F}{up}(t))=t_{i_k}\equiv\subscr{t}{rt}$. For Case~2), let $t\in(t_{j_{k+1}-1},\tau_{i_k})$. Then, we get from \eqref{integral:equal:bij} and the definition of $\tau_{i_k}$ that  
\begin{align*}
\int_{\widehat F(t)}^{\widehat F(t_{i_k})}(\widehat F^{-1}(y)-t)dy & \ge \int_{\widehat F(t_{j_{k+1}-1})}^{\widehat F(t_{i_k})} \! (\widehat F^{-1}(y)-\tau_{i_k})dy 
 =\int_{\widehat F(t_{j_{k+1}-1})}^{\widehat F(t_{i_k})} \hspace*{-1ex} (\widehat F^{-1}(y)-t_{j_{k+1}})dy
 \\
 & +\int_{\widehat F(t_{j_{k+1}-1})}^{\widehat F(t_{i_k})}(t_{j_{k+1}}-\tau_{i_k})dy  =b_{i_k,j_{k+1}}+\Delta t_{i_k}\sum_{l=j_{k+1}}^{i_k}c_l=\rho,
\end{align*}
whereas by arguing precisely as in Case~1), we get that $\int_{\widehat F(t)}^{\widehat F(t_{i_k-1})}(\widehat F^{-1}(y)-t)dy<\rho$. Thus, we deduce  $\widehat F(t_{i_k-1})<\supscr{\widehat F}{up}(t)\le \widehat F(t_{i_k})$, and hence, by (GI4), $\widehat F^{-1}(\supscr{\widehat F}{up}(t))=t_{i_k}\equiv\subscr{t}{rt}$. 
The proof of Fact~I for Cases 3) and 4) follows similar arguments and exploits the orderings \eqref{index:ordering} and \eqref{all:times:ordering}, and we omit it for space reasons.

\textit{Step 5: verification of the formula for $\supscr{\widehat F}{up}$ for $t\in[a,\tau_N)$}. Let any interval $(\subscr{t}{beg},\subscr{t}{end})$ as given by Cases 1)--4), let $t\in(\subscr{t}{beg},\subscr{t}{end})$, $\{t_\nu\}_{\nu\in\bN}\subset(\subscr{t}{beg},\subscr{t}{end})$ with $t_\nu\searrow\subscr{t}{beg}$, and denote $y\equiv\supscr{\widehat F}{up}(t)$, $y_\nu\equiv\supscr{\widehat F}{up}(t_\nu)$, $\nu\in\bN$. Due to Fact I, 
$\widehat F^{-1}(y)=\subscr{t}{rt},\quad \widehat F^{-1}(y_\nu)=\subscr{t}{rt}$ for all $\nu\in\bN$.
We use this together with $
z=\supscr{\widehat F}{up}(t)\Leftrightarrow\int_t^{\widehat F^{-1}(z)}(z-\widehat F(s))ds=\rho
$ and the continuity of $\supscr{\widehat F}{up}$ (which implies $y_\nu\to\subscr{y}{beg}\equiv\supscr{\widehat F}{up}(\subscr{t}{beg})$) to~get 
\begin{align*}
\int_t^{F^{-1}(y)}(y-\widehat F(s))ds & =\int_{t_\nu}^{F^{-1}(y_\nu)}(y_\nu-\widehat F(s))ds \quad\forall \nu\in\bN \Leftrightarrow \\
\int_t^{\subscr{t}{rt}}(y-\widehat F(s))ds & =\int_{t_\nu}^{\subscr{t}{rt}}(y_\nu-\widehat F(s))ds  \quad\forall \nu\in\bN \Leftrightarrow \\
\int_t^{\subscr{t}{rt}}(y-\widehat F(s))ds & =\int_{\subscr{t}{beg}}^{\subscr{t}{rt}}(\subscr{y}{beg}-\widehat F(s))ds \Leftrightarrow \\
\int_t^{\subscr{t}{rt}} \hspace*{-1ex} (y-\subscr{y}{beg})ds+\int_t^{\subscr{t}{rt}} \hspace*{-1ex} (\subscr{y}{beg}-\widehat F(s))ds  & = \hspace*{-1ex} \int_{\subscr{t}{beg}}^t \hspace*{-1ex} (\subscr{y}{beg}-\subscr{y}{low})ds
+\int_t^{\subscr{t}{rt}} \hspace*{-1ex} (\subscr{y}{beg}-\widehat F(s))ds  \Leftrightarrow
\\
(y-\subscr{y}{beg})(\subscr{t}{rt}-t) & =(\subscr{y}{beg}-\subscr{y}{low})(t-\subscr{t}{beg}), 
\end{align*}
with $\subscr{y}{low}=\widehat F(\subscr{t}{beg})$, cf.~\cref{fig:envelope:empirical}. Hence,
$y =\subscr{y}{beg}+(\subscr{y}{beg}-\subscr{y}{low}) 
\frac{t-\subscr{t}{beg}}{\subscr{t}{rt}-t} 
= \subscr{y}{low}+(\subscr{y}{beg}-\subscr{y}{low}) 
\frac{\subscr{t}{rt}-\subscr{t}{beg}}{\subscr{t}{rt}-t}$.
The proof is completed by verifying the formula for $\supscr{\widehat F}{up}$ at $\subscr{t}{beg}$ for each interval given by Cases 1)--4), which follows from the definitions of $y_\ell$ and~$z_\ell$.
\end{proof}

\begin{proof}[Proof of \cref{lemma:reflected:CDF:envelope}]
We exploit the following equivalences for any $F\in\CD([a,b])$ and pair  $(t,y)$ in the graph of its lower and upper CDF envelopes:
\begin{subequations}
\begin{align}
& y=\supscr{\F}{low}_\rho[F](t) \Leftrightarrow \int_{F^{-1}(y)}^t(F(s)-y)ds=\rho \label{lower:env:equiv} \\
& y=\supscr{\F}{up}_\rho[F](t) \Leftrightarrow \int_t^{F^{-1}(y)}(y-F(s))ds=\rho. \label{upper:env:equiv}
\end{align}
\end{subequations}
We also use the following elementary results about the left inverse of a CDF $F\in\CD(\mathbb R)$, defined by $\subscr{F}{left}^{-1}(y):=\inf\{t\in\mathbb R\,|\,F(t)\ge y\}$.

\noindent \textbf{Fact II)} For any $y\in(0,1)$, $F^{-1}(1-y)=a+b-\subscr{\widetilde F}{left}^{-1}(y)$, where  $\widetilde F\equiv \supscr{\F}{refl}_{(\frac{a+b}{2},\frac{1}{2})}[F]$.

\noindent \textbf{Fact III)} For any $y\in[0,1]$ and $t\in\mathbb R$, $\int_t^{\subscr{F}{left}^{-1}(y)}(y-F(s))ds=\int_t^{F^{-1}(y)}(y-F(s))ds$. 

\noindent Next, let $F\in\CD([a,b])$ and  denote $\widetilde F\equiv \supscr{\F}{refl}_{(\frac{a+b}{2},\frac{1}{2})}[F]$ and $\supscr{\widetilde F}{up}\equiv\supscr{\F}{up}_\rho[\widetilde F]$. To prove the result, we show that $\supscr{\F}{low}_\rho[F](t)=\supscr{\F}{refl}_{(\frac{a+b}{2},\frac{1}{2})}\big[\supscr{\widetilde F}{up}](t)$ for any $t$ for which these values are in $(0,1)$. Let $y=1-\supscr{\widetilde F}{up}(a+b-t)=\supscr{\F}{refl}_{(\frac{a+b}{2},\frac{1}{2})}\big[\supscr{\widetilde F}{up}](t)\in(0,1)$. We show that $\int_{F^{-1}(y)}^t(F(s)-y)ds=\rho$, which by \eqref{lower:env:equiv} implies that $\supscr{\F}{low}_\rho[F](t)=y$. Indeed, 
\begin{align*}
\int_{F^{-1}(y)}^t(F(s) &-y)ds =\int_{F^{-1}(1-\supscr{\widetilde F}{up}(a+b-t))}^t(F(s)-(1-\supscr{\widetilde F}{up}(a+b-t)))ds \\
& = \int_{a+b-\subscr{\widetilde F}{left}^{-1}(\supscr{\widetilde F}{up}(a+b-t))}^t(F(s)-(1-\supscr{\widetilde F}{up}(a+b-t)))ds \\
& = \int_{a+b-t}^{\subscr{\widetilde F}{left}^{-1}(\supscr{\widetilde F}{up}(a+b-t))}(\supscr{\widetilde F}{up}(a+b-t)-\widetilde F(s))ds \\
& = \int_{a+b-t}^{\widetilde F^{-1}(\supscr{\widetilde F}{up}(a+b-t))}(\supscr{\widetilde F}{up}(a+b-t)-\widetilde F(s))ds=\rho,
\end{align*}
where we used Fact II in the second equality, that the reflection around $(\frac{a+b}{2},\frac{1}{2})$, i.e., the change of variables $(t,y)\mapsto(a+b-t,1-y)$ is an isometry in the third equality, Fact III in the fourth equality, and the equivalent characterization \eqref{upper:env:equiv} for $y=\supscr{\F}{up}_\rho[F](t)$ in the last equality.
%
\end{proof}

\begin{proof}[Proof of Fact II]
Let $y\in(0,1)$. Then 
\begin{align*}
F^{-1}(1-y) & =\inf\{t\in\mathbb R\,|\,F(t)>1-y\}=\inf F^{-1}((1-y,\infty)) \\
&=\sup F^{-1}((-\infty,1-y])=\sup\{t\in\mathbb R\,|\,F(t)\le 1-y\} \\
& =\sup\{t\in\mathbb R\,|\,1-\widetilde F(a+b-t)\le 1-y\} \\
& =\sup\{a+b-\tau,\tau\in\mathbb R\,|\,1-\widetilde F(\tau)\le 1-y\} \\
& =a+b+\sup\{-\tau,\tau\in\mathbb R\,|\,\widetilde F(\tau)\ge y\} \\
& =a+b-\inf\{\tau\in\mathbb R\,|\,\widetilde F(\tau)\ge y\} =a+b-\subscr{\widetilde F}{left}^{-1}(y), 
\end{align*}
where we used $F$ is increasing and $\inf I=\sup I^c$ for any intervals $I$, $I^c$ with $I\cup I^c=\mathbb R$ in the third equality.  
\end{proof}

\begin{proof}[Proof of Fact III]
To show the result we will prove that $\int_{\subscr{F}{left}^{-1}(y)}^{F^{-1}(y)}(y-F(s))ds=0$. Since $F^{-1}(y)\ge\subscr{F}{left}^{-1}(y)$, it suffices to consider the case of strict inequality. Then, the result follows directly from the fact that $F(s)=y$ for any $s\in(\subscr{F}{left}^{-1}(y),F^{-1}(y))$, which can be readily checked by the definitions of $F^{-1}$ and $\subscr{F}{left}^{-1}$.
\end{proof}

\section{Derivation of the CDF equation}\label{app:MD}
An equation for the Cumulative Distribution Function of $u(\mathbf x,t)$, solution of \eqref{eq:tran}, obeying \cref{assumption:exact:dynamics} and \cref{assumption:unif:existence}, is obtained via the Method of Distributions in three steps. 
First, we rely on the following inequalities for the newly introduced random variable $\Pi(\widetilde {\mathbf x},t)$
\begin{align}\label{eq:pieq}
\frac{\partial \Pi}{\partial t} = -\frac{\partial \Pi}{\partial U} \frac{\partial u}{\partial t}, \quad \nabla \Pi = - \frac{\partial \Pi}{\partial U} \nabla u.
\end{align} 
Second, we multiply \eqref{eq:tran} by $-\frac{\partial \Pi}{\partial U}$ and, accounting for \eqref{eq:pieq}, we obtain a stochastic PDE for $\Pi(U,\mathbf x,t)$:
\begin{align}\label{eq:pi}
\frac{\partial \Pi}{\partial t} + \dot {\mathbf q} (U) \cdot \nabla \Pi = -\frac{\partial \Pi}{\partial U} r(U), \quad \mathbf x \in \Omega, U \in \mathbb R, t>0,
\end{align} 
with $\dot{\mathbf q} = \partial \mathbf q / \partial U$. This formulation is exact in case of smooth solutions of \eqref{eq:tran}~\cite{perthame2002kinetic} and whenever $\nabla \cdot \mathbf q(U)  = 0 $.~\eqref{eq:pi} is defined in an augmented $(d+1)$-dimensional space $\widetilde \Omega = \Omega \times \mathbb R$, and it is subject to initial and boundary conditions that follow from the initial and boundary conditions of the original model
\begin{align*}
    \Pi(U,\mathbf x,t=0) = \Pi_0 = \mathcal H(U-u_0(\mathbf x)), & \quad \widetilde{\mathbf x} \in \widetilde \Omega \\
    \Pi(U,\mathbf x,t) = \Pi_b(U,\mathbf x,t) = \mathcal H(U-u_b(t)), & \quad \mathbf x \in \Gamma, U \in \Omega_U, t>0.
\end{align*}

Finally, since the ensemble average of $\Pi$ is the CDF of $u$, $F_{u(\mathbf x,t)} = \langle \Pi(U,\mathbf x,t) \rangle$, ensemble averaging of \eqref{eq:pi} yields \eqref{eq:MD}.
This equation is subject to initial and boundary conditions 
along $ 
(\Gamma\times\mathbb R )$
\begin{align}
    & F_{u(\mathbf x,t)} = F_{u_0(\mathbf x)}, \quad \widetilde{\mathbf x} \in \widetilde \Omega, t=0 \notag \\
    & F_{u(\mathbf x,t)} = F_{u_b(\mathbf x,t)}, \quad \mathbf x \in \Gamma, 
    U \in \mathbb R, t>0 .
      \label{CDF:PDE}
\end{align}
%
The relaxation of Assumptions~\ref{assumption:exact:dynamics} and~\ref{assumption:unif:existence} leads to different (and often approximated) CDF equations: we refer to~\cite{boso-2014-cumulative,boso-2019-hyperbolic} for a complete discussion.

\vspace*{-1ex}
\bibliographystyle{siamplain}
\bibliography{alias,references}
\end{document}